\documentclass[11pt]{amsart}

\usepackage{amssymb,latexsym}

\usepackage{enumerate}

\makeatletter

\@namedef{subjclassname@2010}{%

  \textup{2010} Mathematics Subject Classification}

\makeatother

\usepackage[english]{babel}
\usepackage{graphicx}
\usepackage{latexsym}
\usepackage{amsmath}
\usepackage{amsfonts}
\usepackage{amssymb, amsthm}
\usepackage{mathrsfs, psfrag}
\usepackage{stmaryrd}

\numberwithin{equation}{section}

\frenchspacing

\textwidth=13.5cm

\textheight=21cm

\parindent=16pt

\oddsidemargin=-0.5cm

\evensidemargin=-0.5cm

\topmargin=1cm

\begin{document}

\baselineskip=17pt

\newtheorem{thm}{Theorem}
\newtheorem*{thma}{Theorem A}
\newtheorem*{thmb}{Theorem B}

\newtheorem{cor}[thm]{Corollary}
\newtheorem{counterexample}[thm]{Counterexample}
\newtheorem{conj}[thm]{Conjecture}
\newtheorem{lem}[thm]{Lemma}
\newtheorem{prop}[thm]{Proposition}

\newtheorem*{moore}{Moore's Theorem}
\newtheorem*{poincare}{Poincar\'e Classification Theorem}

\theoremstyle{definition}

\newtheorem{defn}{Definition}
\newtheorem{example}{Example}
\newtheorem{counter}{Counterexample}
\newtheorem{quest}{Question}
\newtheorem{notation}[thm]{Notation}
\newtheorem*{claim}{Claim}

\theoremstyle{open}
\newtheorem{open}{Open problem}
\newtheorem{rem}{Remark}

\newcommand{\PP}{{\mathbb P}}
\newcommand{\HH}{{\mathbb H}}
\newcommand{\C}{{\mathbb C}}
\newcommand{\M}{{\mathbb M}}
\newcommand{\N}{{\mathbb N}}
\newcommand{\Q}{{\mathbb Q}}
\newcommand{\R}{{\mathbb R}}
\newcommand{\T}{{\mathbb T}}
\newcommand{\V}{{\mathbb V}}
\newcommand{\U}{{\mathbb U}}
\newcommand{\X}{{\mathbb X}}
\newcommand{\Z}{{\mathbb Z}}
\newcommand{\D}{{\mathbb D}}
\newcommand{\B}{{\mathbb B}}
\newcommand{\s}{{\mathbb S}}

\newcommand{\A}{{\mathcal A}}
\newcommand{\BB}{{\mathcal B}}
\newcommand{\UU}{{\mathcal U}}
\newcommand{\CC}{{\mathcal C}}
\newcommand{\DD}{{\mathcal D}}
\newcommand{\MM}{{\mathcal M}}
\newcommand{\SSS}{{\mathcal S}}
\newcommand{\OO}{{\mathcal O}}
\newcommand{\QQ}{{\mathcal Q}}
\newcommand{\LL}{{\mathcal L}}

\newcommand{\KK}{{\mathcal K}}
\newcommand{\WW}{{\mathcal W}}
\newcommand{\NN}{{\mathcal N}}
\newcommand{\RR}{{\mathcal R}}
\newcommand{\FF}{{\mathcal F}}
\newcommand{\HHH}{{\mathcal H}}

\newcommand{\diam}{{\textup{diam}}}
\newcommand{\ra}{{\rightarrow}}
\newcommand{\lra}{{\longrightarrow}}
\newcommand{\dist}{{\textup{dist}}}
\newcommand{\Int}{{\textup{Int}}}
\newcommand{\Id}{{\textup{Id}}}
\newcommand{\Cl}{{\textup{Cl}}}
\newcommand{\norm}{\parallel}
\newcommand{\card}{\textup{card}}
\newcommand{\diff}{\textup{Diff}}
\newcommand{\homeo}{\textup{Homeo}}

\newcommand{\filled}{\textup{Fill}}

\newcommand{\SL}{\textup{SL}}
\newcommand{\SO}{\textup{SO}}
\newcommand{\fix}{\textup{Fix}}
\newcommand{\area}{\textup{area}}
\newcommand{\Mod}{\textup{mod}}
\newcommand{\Mob}{\textup{M\"ob}}
\newcommand{\dil}{\textup{dil}}
\newcommand{\gl}{\textup{GL}}
\newcommand{\vol}{\textup{Vol}}
\newcommand{\lin}{\textup{Lin}}

\newcommand{\tu}{\textup}

\title{Minimal Sets of Non-Resonant Torus Homeomorphisms}

\author[F.H. Kwakkel]{Ferry Kwakkel}

\address{F. H. Kwakkel \\ 
University of Warwick \\ 
Mathematics Institute \\ 
CV4 7AL, Coventry \\ 
United Kingdom}

\email{f.h.kwakkel@warwick.ac.uk} 

\begin{abstract}
As was known to H. Poincar\'e, an orientation preserving circle homeomorphism without periodic points is either minimal or has no dense orbits, and every orbit accumulates on the unique minimal set. In the first case the minimal set is the circle, in the latter case a Cantor set. In this paper we study a two-dimensional analogue of this classical result: we classify the minimal sets of non-resonant\index{non-resonant torus homeomorphism} torus homeomorphisms; that is, torus homeomorphisms isotopic to the identity for which the rotation set is a point with rationally independent irrational coordinates. 
\end{abstract}

\subjclass[2010]{Primary  37B99; Secondary 37B45}

\keywords{topological dynamics, minimal sets, torus homeomorphisms}

\maketitle

\section{Introduction and statement of results}

Let $\T^1 = \R \slash \Z$ and $f \colon \T^1 \rightarrow \T^1$ an orientation preserving circle homeomorphism. A lift $F \colon \R \rightarrow \R$ 
of $f$ satisfies $f \circ p_1 = p_1 \circ F$, with $p_1 \colon \R \rightarrow \T^1$ the canonical projection. The number
\begin{equation}
\rho(F,x) := \lim_{n \ra \infty} \frac{F^n(x) - x }{n},
\end{equation}
exists for all $x \in \R$, is independent of $x$ and well defined up to an integer; that is, if $F$ and $\widehat{F}$ are two lifts of $f$ then 
$\rho(F) - \rho(\widehat{F}) \in \Z$. The number $\rho(f):= \rho(F,x)\mod \Z$ is called the {\em rotation number}\index{rotation number} 
of $f$ and $\rho(f) \in \Q$ if and only if $f$ has periodic points. Denote $r_{\theta} \colon \T^1 \ra \T^1$ the rigid rotation of the circle with rotation number $\theta$. The following classical result classifies the possible topological dynamics of orientation preserving homeomorphisms of the circle without periodic points~\cite{poincare_1, poincare_2, poincare_3}. 

\begin{poincare}
Let $f \colon \T^1 \rightarrow \T^1$ be an orientation preserving homeomorphism such that $\rho(f) \in \R \setminus \Q$.
Then 
\begin{enumerate}
\item[(i)] if $f$ is transitive then $f$ is conjugate to the rigid rotation $r_{\rho(f)}$, and 
\item[(ii)] if $f$ is not transitive then $f$ is semi-conjugate to the rotation $r_{\rho(f)}$ via a non-invertible continuous monotone map. 
\end{enumerate}
Moreover, $f$ has a unique minimal set $\MM$, which is the circle $\T^1$ in case \textup{(i)}, or a Cantor set in case \textup{(ii)}
and $\MM = \Omega(f) = \omega(x) = \alpha(x)$ for all $x \in \T^1$.
\end{poincare}

Here, $\Omega(f)$ is used to denote the {\em non-wandering set} of $f$ and $\omega(x), \alpha(x)$ the {\em omega- and alpha-limit set} of $f$ relative to $x \in \T^1$. Every connected component $I$ of the complement of the Cantor minimal set is a wandering interval, i.e. $f^n (I) \cap I = \emptyset$, for all $n \neq 0$. Given a Cantor set in the circle, there exists a circle homeomorphism with any given irrational rotation number that has this Cantor set as its minimal set $\MM$. This fact was first explicitly mentioned by Denjoy~\cite{denjoy}, but essentially known already by Bohl~\cite{bohl} and Kneser~\cite{kneser}. Denjoy~\cite{denjoy} proved that an orientation preserving circle diffeomorphism $f \in \diff^2(\T^1)$ with irrational rotation number is necessarily transitive and hence can not have a wandering interval, see also~\cite{herman79} where 
these ideas are further developed.

\medskip 

The key feature of orientation preserving circle homeomorphisms without periodic points is that it has an irrational rotation number which is independent of the basepoint, where the rotation with the corresponding rotation number is minimal. A natural generalization to dimension two is as follows. Let $\T^2 = \R^2 \slash \Z^2$, where $p \colon \R^2 \ra \T^2$ is the canonical projection mapping. We denote $\homeo(\T^2)$ the class of homeomorphisms of the torus and $\homeo_0(\T^2) \subset \homeo(\T^2)$ the subclass of homeomorphisms isotopic to the identity. Given an element $f \in \homeo_0(\T^2)$, we denote $F \colon \R^2 \ra \R^2$ a lift to the cover. Any two different lifts $F, \widehat{F}$ of $f$ differ by an integer translation, that is, $F(z) = \widehat{F}(z) + (n,m)$ where $(n,m) \in \Z^2$. Given a lift $F$ and $\widetilde{z} \in \R^2$, define
\[ \rho(F) = \bigcap_{m=1}^{\infty} \Cl \left( \bigcup_{n=m}^{\infty} \left\{ \frac{F^n(\widetilde{z}) - \widetilde{z}}{n} \right\} ~|~ \widetilde{z} \in \R^2 \right) \subset \R^2. \]
The {\em rotation set} of $f$ is defined as $\rho(f) = \rho(F) \mod \Z^2$. In words, the rotation set collects all limit points, modulo $\Z^2$, of sequences of the form 
\[ \frac{F^{n_k}(\widetilde{z}) - \widetilde{z}}{n_k}, \]
where $n_k \ra \infty$, for $k \ra \infty$ and $\widetilde{z} \in \R^2$. The rotation set of a homeomorphism $f \in \homeo_0(\T^2)$ is in general no longer a single point, but a convex connected closed set, see~\cite{misi}.
We define 
\begin{equation} 
\homeo_*(\T^2) \subset \homeo_0(\T^2),
\end{equation}
the class of homeomorphisms isotopic to the identity for which the rotation set $\rho(f) = (\alpha, \beta)\mod \Z^2$ is a single point where the numbers $1, \alpha, \beta$ are rationally independent. These homeomorphisms are said to be {\em non-resonant torus homeomorphisms}.

Generalizations of Poincar\'e's Theorem has recently seen much progress through the work of F. B\'eguin, S. Crovisier, T. J\"ager, G. Keller, F. le Roux and J. Stark~\cite{beguin, jager_2, jager_1}, where one considers an analogous class of torus homeomorphisms, namely {\em quasiperiodically forced circle homeomorphisms}, which are torus homeomorphisms of the form $(x, \theta) \mapsto (x+\alpha, g_{\theta}(x)) \mod\Z^2$, with $(x,\theta) \in \T^2$ and $g_{\theta} \colon \T^1 \ra \T^1$ a family of circle homeomorphisms and $\alpha \in \R \setminus \Q$. In~\cite{beguin, jager_2, jager_1}, the appropriate analogues of the results of Poincar\'e and Denjoy in the class of quasiperiodically forced circle homeomorphisms are developed. Analogues of Poincar\'e's Theorem in the setting of conservative torus homeomorphisms are developed by T. J\"ager in~\cite{jager}. In the setting of non-resonant homeomorphisms of the two-torus, introducing differentiability or other geometrical conditions on the homeomorphisms gives rise to analogues of Denjoy's Theorem, see~\cite{no, nv, ns} for results in this direction.

\medskip

To state our results, we need the following definitions. A connected set $X \subset \T^2$ is said to be {\em (un)bounded} according to whether 
a lift $\widetilde{X} \subset \R^2$ is (un)bounded as a subset of $\R^2$, where a lift $\widetilde{X}$ of $X$ is a connected component of 
$p^{-1}(X)$. A compact and connected set is called a {\em continuum}. A continuum $\CC$ in $\T^2$ is called {\em non-separating} if the complement in $\T^2$ is connected. Given a bounded continuum $\CC \subset \T^2$, we define $\filled(\CC) \subset \T^2$, the {\em filled continuum}\index{filled continuum}, the smallest (with respect to inclusion) non-separating bounded continuum containing $\CC$. A bounded non-separating continuum in $\T^2$ is called {\em acyclic}.

\begin{defn}[Extension of a Cantor set]\label{defn_cantor}\index{Cantor set!extension of a Cantor set}
Let $\MM$ be a minimal set for $f \in \homeo_*(\T^2)$, with $\{ \Lambda_i \}_{i \in I}$ be the collection of connected components of $\MM$.
If $\filled(\Lambda_i)$ is acyclic for every $i \in I$, $\filled(\Lambda_i) \cap \filled(\Lambda_j) = \emptyset$ if $i \neq j$ and 
there exists a continuous $\phi: \T^2 \ra \T^2$, homotopic to the identity, and an $\widehat{f} \in \homeo_*(\T^2)$, such that 
\begin{enumerate}
\item[\textup{(i)}] $\phi \circ f = \widehat{f} \circ \phi$, i.e. $f$ is semi-conjugate to $\widehat{f}$, and 
\item[\textup{(ii)}] $\widehat{\MM} := \phi( \widehat{\QQ} ) \subset \T^2$ is a Cantor minimal set for $\widehat{f}$,
\end{enumerate}
where $\widehat{\QQ} = \bigcup_{i \in I} \filled(\Lambda_i)$, then we say $\MM$ is an {\em extension of a Cantor set}.
\end{defn}

Put in words, $\MM$ is an extension of a Cantor set, if the semi-conjugacy $\phi$ between $f$ and $\widehat{f}$ sends the collection of filled in components of $\MM$ to points in a one-to-one fashion, and the corresponding totally disconnected set $\widehat{\MM}$ is a Cantor minimal set of the factor $\widehat{f}$. An extension of a Cantor set is called {\em non-trivial}, if there exist components of $\MM$ that are not singletons. 

Further, we define the following. A {\em disk} $D \subset \T^2$ in the torus is an injection by a homeomorphism of the open unit disk $\D^2 \subset \R^2$ into the torus. An {\em annulus} $A \subset \T^2$ is an injection by a homeomorphism of the open annulus $\mathbb{S}^1 \times (0,1)$ into the torus, where $A$ is said to be {\em essential} if the inclusion $A \hookrightarrow \T^2$ induces an injection of $\pi_1(A)$ into $\pi_1(\T^2)$. Let us now state our main results. Our first result gives a classification of the possible minimal sets of homeomorphisms in our class $\homeo_*(\T^2)$.

\begin{thma}[Classification of minimal sets]\label{thm_main}\index{minimal set!type I}\index{minimal set!type II}\index{minimal set!type III}
Let $f \in \homeo_*(\T^2)$ and let $\MM$ be a minimal set for $f$. Let $\{ \Sigma_k \}_{k \in \Z}$ be the connected components of the complement 
$\MM$ in $\T^2$. If $\MM \neq \T^2$, then either
\begin{enumerate}
\item[\textup{(I)}] $\{ \Sigma_k \}$ is a collection of disks,
\item[\textup{(II)}] $\{ \Sigma_k \}$ is a collection of essential annuli and disks,
\item[\textup{(III)}] $\MM$ is an extension of a Cantor set. 
\end{enumerate}
\end{thma}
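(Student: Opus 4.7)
The plan is to analyse both families of components simultaneously: the connected components $\{\Sigma_k\}_{k\in\Z}$ of $\T^2\setminus\MM$ and the connected components $\{\Lambda_i\}_{i\in I}$ of $\MM$ itself. The homeomorphism $f$ permutes each family, and since $1,\alpha,\beta$ are rationally independent, $f$ has no periodic points; this forbids $f^n$ from fixing a disk-like (or, more generally, a non-zero Euler characteristic) component, via a Brouwer--Lefschetz argument applied to the closure of such a component. Consequently, along every $f$-orbit the components share all their topological invariants, and I may classify $\Sigma_k$ and $\Lambda_i$ up to one representative per orbit.

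For a component $\Sigma_k$, let $\widetilde{\Sigma}_k\subset\R^2$ be a connected lift and $\Gamma_k\le\Z^2$ its deck-stabiliser. The rank of $\Gamma_k$ takes one of three values. Rank two would force $\widetilde{\Sigma}_k$ to cover a cofinite region of $\T^2$, so that $\MM$ would be contained in a finite subset of $\T^2$ modulo $\Z^2$, contradicting the fact that the $f$-orbit of any point of $\MM$ is infinite. Rank one forces $\Sigma_k$ to be an essential open annulus, and because disjoint essential simple loops on $\T^2$ must be homotopically parallel, all rank-one components in $\{\Sigma_k\}$ share a common primitive homology class. Rank zero means $\widetilde{\Sigma}_k$ is a bounded open planar subset of $\R^2$. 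Case (I) corresponds to every $\Sigma_k$ being a rank-zero topological disk, and case (II) to the presence of some rank-one component, the remaining rank-zero components being disks.

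The remaining situation is that every $\Sigma_k$ has rank zero but some $\Sigma_k$ is not simply connected---equivalently, some $\Lambda_i$ is a non-degenerate continuum lying in a hole of a nearby $\Sigma_{k'}$. I first rerun the lift analysis for the $\Lambda_i$: an unbounded lift of some $\Lambda_i$ would produce an essential continuum separating $\T^2$, which would place an essential annulus in the complement and drop us back into case (II). So every $\Lambda_i$ is bounded, and $\filled(\Lambda_i)$ is a well-defined acyclic continuum. A minimality argument combined with the $f$-permutation shows that distinct $\filled(\Lambda_i)$ are disjoint, since any nesting $\filled(\Lambda_i)\subset\filled(\Lambda_j)$ would propagate under iteration of $f$ and contradict minimality. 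I then define $\phi\colon\T^2\ra\T^2$ as the quotient map collapsing each $\filled(\Lambda_i)$ to a point.

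The main obstacle will be to verify all four clauses of Definition~\ref{defn_cantor} for this $\phi$. That the quotient space is homeomorphic to $\T^2$ and that $\phi$ can be chosen homotopic to the identity is supplied by a theorem of R.L. Moore on upper semicontinuous decompositions of $\T^2$ into acyclic continua; upper semicontinuity follows because the $\Lambda_i$ are the connected components of the closed set $\MM$, and the fibres of $\phi$ are the acyclic sets $\filled(\Lambda_i)$. That $f$ descends to a homeomorphism $\widehat{f}\in\homeo_0(\T^2)$ is automatic from $f$-invariance of the decomposition, and the homotopy $\phi\simeq\Id$ implies that $\widehat{f}$ has the same rotation set as $f$, placing it in $\homeo_*(\T^2)$. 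Finally, $\widehat{\MM}=\phi(\widehat{\QQ})$ is compact and totally disconnected by construction, and perfect by minimality of $\MM$, hence a Cantor set; minimality of $\widehat{\MM}$ for $\widehat{f}$ transfers directly from that of $\MM$ for $f$ through the semi-conjugacy.
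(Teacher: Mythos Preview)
Your trichotomy collapses at the rank-two step. You claim that if the deck-stabiliser $\Gamma_k$ of a lift $\widetilde{\Sigma}_k$ has rank two then $\widetilde{\Sigma}_k$ covers a cofinite region and $\MM$ is finite; this is false. Rank two means only that the full preimage $p^{-1}(\Sigma_k)$ is connected, i.e.\ that $\Sigma_k$ is doubly essential in the sense of Definition~\ref{defn_domains_types}. The complement of such a domain can perfectly well be uncountable---take $\MM$ to be a product of two circle Cantor sets, whose complement is a single connected doubly essential domain with stabiliser all of $\Z^2$. In the paper's organisation the doubly essential case is exactly where type~III arises (Lemma~\ref{lem_dessential}); by discarding it you have misplaced type~III entirely.

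The branch you do assign to type~III, ``all $\Sigma_k$ have rank zero but some $\widetilde{\Sigma}_k$ is not simply connected'', is in fact empty: Lemmas~\ref{lem_wander} and~\ref{lemma_disk} show that a trivial complementary domain always has simply connected lift, the argument using recurrence of $\MM$-points together with the Brouwer fixed-point obstruction of Lemma~\ref{lem_fix_2} to exclude holes in $\widetilde{\Sigma}$. Two smaller points: your assertion that a rank-zero lift is bounded is false (Example~\ref{ex_unbdd_disk} is a trivial $\Sigma$ whose lift is unbounded), and your claim that a rank-one component is automatically an essential annulus needs precisely the simple-connectedness argument of Lemma~\ref{lemma_cylinder}, which in turn uses the rational independence of $1,\alpha,\beta$ to rule out $f$-periodic essential domains. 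The substantive dynamical work in the proof lies in establishing simple connectedness of the lifts in the trivial and essential cases; the rank classification by itself is only the scaffolding.
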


We prove this result in section~\ref{sec_class}. It follows from the proof of Theorem A that 

\begin{cor}[Structure of orbits; type I and II]\label{cor_limit_sets}
Let $f \in \homeo_*(\T^2)$ with a minimal set $\MM$ of type I or II. Then 
\begin{equation}\label{eq_thm_orbits}
\MM = \Omega(f) = \omega(z) = \alpha(z),
\end{equation} 
for all $z \in \T^2$. In particular, $\MM$ is unique.
\end{cor}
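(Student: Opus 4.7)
My plan is to prove the corollary in two stages. The first stage, which should be extracted from the proof of Theorem A, is the assertion that in types I and II every complementary component $\Sigma_k$ is wandering under $f$, i.e.\ $f^n(\Sigma_k) \cap \Sigma_k = \emptyset$ for every $n \neq 0$. Indeed, if $f^N(\Sigma_k) = \Sigma_k$ for some $N \geq 1$, then lifting to $\R^2$ one obtains a homeomorphism of a lift $\widetilde{\Sigma}_k$ onto a $\Z^2$-translate of itself. In the disk case the lift is necessarily bounded, since the $\Z^2$-translates of $\widetilde{\Sigma}_k$ are pairwise disjoint, so any unbounded sequence in $\widetilde{\Sigma}_k$ would project inside $\T^2$ to a limit point whose lift back to $\widetilde{\Sigma}_k$ would force two different translates of $\widetilde{\Sigma}_k$ to meet. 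Applying Cartwright--Littlewood to the non-separating invariant continuum $\filled(\Cl(\widetilde{\Sigma}_k))$ under the homeomorphism $F^N-(p,q)$ (with $(p,q) \in \Z^2$ the translation identifying $F^N(\widetilde{\Sigma}_k)$ with $\widetilde{\Sigma}_k$) produces a fixed point and thus a periodic point of $f$, contradicting the assumption that $1,\alpha,\beta$ are rationally independent. The essential annulus case is handled analogously on an infinite strip in $\R^2$.

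Granting the wandering property, the rest of the corollary is a short argument. For $z \in \MM$ minimality immediately yields $\omega(z) = \alpha(z) = \MM$. For $z \in \T^2 \setminus \MM$, say $z \in \Sigma_k$, I would show $\omega(z) \subset \MM$ by a pigeon-hole: if $w \in \omega(z)$ lay in some open component $\Sigma_j$, then there would be a sequence $n_i \to \infty$ with $f^{n_i}(z) \in \Sigma_j$, hence $f^{n_i}(\Sigma_k) = \Sigma_j$ for every $i$. By wandering, the components $\{f^n(\Sigma_k)\}_{n \in \Z}$ are pairwise distinct, so only one of them can equal $\Sigma_j$ --- a contradiction. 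Thus $\omega(z)$ is a non-empty closed $f$-invariant subset of the minimal set $\MM$ and therefore equals $\MM$. Applying the same reasoning to $f^{-1}$, which also lies in $\homeo_*(\T^2)$ and has $\MM$ as a minimal set of the same type, gives $\alpha(z) = \MM$.

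For the non-wandering set, $\MM \subset \Omega(f)$ is automatic from minimality, while wandering of $\Sigma_k$ provides $\Sigma_k$ itself as a wandering neighbourhood of any $z \in \Sigma_k$, so $\Omega(f) = \MM$. Uniqueness of $\MM$ is then immediate: for any minimal set $\MM'$ and any $z \in \MM'$, one has $\omega(z) = \MM'$ by minimality of $\MM'$ and $\omega(z) = \MM$ by what precedes, hence $\MM' = \MM$.

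The main obstacle here is clearly the first stage, the wandering property of the complementary components $\Sigma_k$. This is the substantive topological input, coming from plane fixed-point theory applied to suitable lifts, and is the content one has to extract from the proof of Theorem A. Once that is secured, the remainder of the corollary is a clean formal consequence of wandering together with the minimality of $\MM$.
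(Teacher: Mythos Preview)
Your two-stage plan is sound in outline, but the first stage as stated is false: a complementary disk need not be wandering. Example~\ref{ex_unbdd_disk} of the paper constructs $f \in \homeo_*(\T^2)$ whose minimal set is of type I and whose complement is a \emph{single $f$-invariant} unbounded disk $\Sigma$, so $f(\Sigma) = \Sigma$. Your argument that a disk must have bounded lift is where this breaks: an unbounded sequence $\widetilde{z}_n \in \widetilde{\Sigma}$ may project to points $z_n \in \Sigma$ whose limit point lies in $\partial \Sigma \subset \MM$ rather than in $\Sigma$, and then there is no lift of that limit back into $\widetilde{\Sigma}$ with which to manufacture a contradiction. Pairwise disjointness of the $\Z^2$-translates of $\widetilde{\Sigma}$ simply does not force $\widetilde{\Sigma}$ to be bounded --- Example~\ref{ex_unbdd_disk} is precisely such a situation. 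Since your second-stage pigeon-hole argument needs the components $\{f^n(\Sigma_k)\}$ to be pairwise distinct, the invariant unbounded disk defeats that step as well.

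The paper's proof does not claim that such a $\Sigma$ wanders; it shows instead that every \emph{point} of $\Sigma$ is wandering, which already gives $\Omega(f) \cap \Sigma = \emptyset$. The argument (the ``straightforward modification'' of Lemma~\ref{lemma_disk} alluded to there) runs as follows: if $z \in \Sigma$ were non-wandering, take a small ball $B \subset \Sigma$ about $z$ and times $n_k \to \infty$ with $f^{n_k}(B) \cap B \neq \emptyset$; choosing the lift $F$ with $F(\widetilde{\Sigma}) = \widetilde{\Sigma}$, one has $F^{n_k}(\widetilde{B}) \subset \widetilde{\Sigma}$, while the return forces $F^{n_k}(\widetilde{B}) \cap T_{p_k,q_k}(\widetilde{B}) \neq \emptyset$ with $T_{p_k,q_k}(\widetilde{B}) \subset T_{p_k,q_k}(\widetilde{\Sigma})$. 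Disjointness of the translates of $\widetilde{\Sigma}$ forces $(p_k,q_k)=(0,0)$, so $F^{n_k}(\widetilde{B}) \cap \widetilde{B} \neq \emptyset$; this exhibits $(0,0)$ in the rotation set of $F$, contradicting $\rho(F) \in (\alpha,\beta) + \Z^2$. Once $\Omega(f) = \MM$ is established, the inclusions $\alpha(z), \omega(z) \subset \Omega(f)$ together with minimality of $\MM$ yield~\eqref{eq_thm_orbits} and uniqueness, essentially as in your second stage.
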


In~\cite[Thm 1.2]{beguin}, F. B\'eguin, S. Crovisier, T. J\"ager and F. le Roux construct a counterexample to Corollary~\ref{cor_limit_sets}
in the case where $\MM$ is of type III in the setting of quasiperiodically forced circle homeomorphisms. Formulated in our terminology, this 
result reads 

\begin{counterexample}[Structure of orbits; type III~\cite{beguin}]\label{CA_1}
There exist homeomorphisms $f \in \homeo_*(\T^2)$ which have a unique Cantor minimal set $\MM$ (and are thus of type III), but are transitive.
\end{counterexample}

In other words, $\MM \neq \T^2$ is the unique Cantor minimal set, but $\Omega(f) = \T^2$. Uniqueness of minimal sets of type III homeomorphisms 
has not yet been settled, see Question 1 in the final section of this paper. Further, we have that 

\begin{cor}[Connected minimal sets]\label{cor_connected}
Let $f \in \homeo_*(\T^2)$. If $\MM \neq \T^2$, then $\MM$ is connected if and only if $\MM$ is of type I.
\end{cor}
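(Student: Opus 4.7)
My plan is to prove the two implications separately.

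For the forward direction (type I implies $\MM$ connected), I would enumerate the disk components of $\T^2 \setminus \MM$ as $D_1, D_2, \ldots$ and set $X_n := \T^2 \setminus \bigcup_{k=1}^n D_k$. Each $X_n$ is compact, and by induction on $n$ it is connected: removing a single embedded open disk from a connected 2-manifold never disconnects it. Since $\MM = \bigcap_n X_n$ is a nested intersection of a decreasing family of compact connected subsets of the Hausdorff space $\T^2$, the set $\MM$ is itself connected.

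For the converse (connected $\MM \neq \T^2$ implies type I), I would apply Theorem~A and rule out types II and III. Type III is immediate: by the definition of an extension of a Cantor set, the connected components $\{\Lambda_i\}_{i \in I}$ of $\MM$ correspond under the semi-conjugacy $\phi$ to points of the Cantor set $\widehat{\MM}$, so $I$ is uncountable and $\MM$ has more than one component. To exclude type II, suppose the complement contains an essential annulus $A$ of primitive homotopy class $(p,q) \in \Z^2$. If $f(A) \neq A$, then $A$ and $f(A)$ are two disjoint parallel essential annuli of class $(p,q)$; choosing tame essential cores $\gamma_A \subset A$ and $\gamma_{f(A)} \subset f(A)$ produces two disjoint parallel essential simple closed curves that separate $\T^2$ into two disjoint open cylinders $C_1$ and $C_2$. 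The boundary $\partial A \subset \MM$ meets each side of $\gamma_A$, that is, both $C_1$ and $C_2$, so $\MM$ meets both cylinders, contradicting connectedness. If instead $f(A) = A$, I would choose a lift $F$ of $f$ that setwise preserves a specific lift $\widetilde{A} \subset \R^2$ of $A$ (an essential strip in direction $(p,q)$); orbits of $F$ starting in $\widetilde{A}$ remain in that strip with bounded perpendicular coordinate, forcing the rotation vector of $F$ to have the form $\rho(F) = \mu(p,q)$ for some $\mu \in \R$. Since $\rho(F)$ and $(\alpha,\beta)$ differ by an integer vector $(r,s) \in \Z^2$, a direct computation gives $q\alpha - p\beta = qr - ps \in \Z$, a nontrivial rational relation among $1,\alpha,\beta$, contradicting non-resonance of $f$.

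The main obstacle is the sub-case $f(A) = A$ of type II: it requires combining the topology of $f$-invariant essential annuli in $\T^2$ with a careful selection of a lift $F$ and application of non-resonance in $\R^2$ rather than in $\R^2/\Z^2$. The remaining cases reduce to elementary topology or directly to the definition of extension of a Cantor set given in the excerpt.
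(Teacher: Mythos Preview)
Your converse direction is correct and considerably more explicit than the paper's, which dismisses types~II and~III with ``readily verified''. Your type~II argument is sound; note however that the sub-case $f(A)=A$ cannot actually occur, since this is precisely what Lemma~\ref{lemma_cylinder} rules out in the course of proving Theorem~A. If you are willing to invoke that lemma you may drop the non-resonance computation and pass directly to the two-annulus separation argument, which is all the paper needs.

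Your forward direction has the right global shape (a nested intersection of compact connected sets) but the inductive justification has a gap. You invoke ``removing a single embedded open disk from a connected 2-manifold never disconnects it''; this is fine for the first step, but after removing $D_1$ the set $X_1=\T^2\setminus D_1$ need not be a 2-manifold at all. The complementary disks $\Sigma_k$ of a type~I minimal set can have arbitrarily wild frontier (cf.\ Example~\ref{ex_unbdd_disk}, where the frontier consists of two dense immersed lines), so $X_1$ is in general not locally Euclidean along $\partial D_1$, and the inductive step ``remove $D_{n+1}$ from the 2-manifold $X_n$'' does not apply.

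The paper circumvents this by exhausting each $\Sigma_k$ from within by tame closed disks $D_k^t$ with $\Cl(D_k^t)\subset\Sigma_k$; since the $\Sigma_k$ are pairwise disjoint, the closures $\Cl(D_k^t)$ for fixed $t$ are genuinely pairwise disjoint, and the torus minus finitely many such disks is evidently connected. A double nested intersection (over a finite index $s$, then over $t$) recovers $\MM$. Your scheme can also be repaired without this detour: a one-line Mayer--Vietoris computation in degree~$2$ shows directly that $\T^2$ minus any finite disjoint union of open disks is connected (the key point being that any proper open subset of $\T^2$ is a noncompact surface and hence has $H_2=0$), which establishes connectedness of each $X_n$ without induction.
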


To state the third corollary, we need the following. Recall that a {\em null-sequence} is a sequence of positive real numbers for which for every given $\epsilon>0$ there exist only 
finitely many elements of the sequence that are greater than $\epsilon$.

\begin{defn}[quasi-Sierpi\'nski set]\label{defn_torus_sierpinski}\index{quasi-Sierpi\'nski set}
A {\em quasi-Sierpi\'nski set} is a continuum $S = \T^2 \backslash \bigcup_{k \in \Z} D_k$ with $\{ D_k \}_{k \in \Z}$ a family of disks such that $\bigcup_{k \in \Z} D_k$ is dense in $\T^2$, and
\begin{enumerate}
\item[a)] $D_k$ is the interior of a closed embedded disk, for every $k \in \Z$, 

\item[b)] $\Cl(D_k) \cap \Cl(D_{k'})$ is at most a single point if $k \neq k'$, and

\item[c)] $\diam( D_k )$, $k \in \Z$, is a null-sequence.
\end{enumerate}
If property b) above is replaced by the condition that $\Cl(D_k) \cap \Cl(D_{k'}) = \emptyset$, if $k \neq k'$, then we refer to $S$ as 
a {\em Sierpi\'nski set}.\index{Sierpi\'nski set}
\end{defn}

A closed subset of a topological space is {\em locally connected}, if every of its points has arbitrarily small connected neighbourhoods.
Requiring a minimal set to be locally connected, reduces the list of Theorem A to one type of non-trivial minimal set.

\begin{cor}[Locally connected minimal sets]\label{cor_loc_conn}
Let $f \in \homeo_*(\T^2)$ and suppose that the minimal set $\MM$ of $f$ is locally connected. 
Then either $\MM = \T^2$ or $\MM$ is a quasi-Sierpi\'nski set.
\end{cor}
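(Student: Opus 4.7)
The plan is to invoke Theorem~A to reduce $\MM \neq \T^2$ to types I, II, or III, show that local connectedness eliminates types II and III, and then verify that a locally connected minimal set of type I satisfies the three axioms of Definition~\ref{defn_torus_sierpinski}.

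For type III, by Definition~\ref{defn_cantor} the filled components $\filled(\Lambda_i)$ project under $\phi$ to the perfect set $\widehat{\MM}$, so each $\filled(\Lambda_i)$ is accumulated in $\T^2$ by other $\filled(\Lambda_j)$'s. Any open neighborhood in $\T^2$ of a point $x \in \Lambda_i$ therefore meets infinitely many other components $\Lambda_j$; since any connected subset of $\MM$ containing $x$ must lie entirely within the single component $\Lambda_i$, no small connected open neighborhood of $x$ in $\MM$ can exist, contradicting local connectedness.

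For type II, some component $A$ of $\T^2 \setminus \MM$ is an essential annulus. Lifting $A$ to the cyclic cover of $\T^2$ that kills the homology class $(p,q) \in \Z^2$ of its core, and applying Carath\'eodory's prime-end theorem together with local connectedness of $\MM$, each end of $A$ accumulates onto a Jordan curve that descends to an essential simple closed curve $\gamma \subset \MM$ of class $(p,q)$. In the cyclic cover, lifts of $\gamma$ are compact circles whose $F$-iterates drift transverse to $(p,q)$ at a speed equal to the transverse component of the rotation vector $(\alpha,\beta)$. Rational independence of $1,\alpha,\beta$ forces this speed to be irrational, so the closure of the $F$-orbit of a single lifted $\gamma$ is dense in the transverse direction, and $\MM$ must meet every essential curve of class $(p,q)$ in $\T^2$, including the core of $A$; this contradicts $A \cap \MM = \emptyset$.

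For type I, I verify Definition~\ref{defn_torus_sierpinski}. Density of $\bigcup_k D_k$ in $\T^2$ is equivalent to $\Int(\MM)=\emptyset$, which holds by minimality, since otherwise $\Int(\MM)$ would be a nonempty $f$-invariant open set and its closure a proper $f$-invariant closed subset of $\MM$. Axiom~(c), the null-sequence property, is a classical consequence of $\MM$ being a locally connected continuum in a surface. For axiom~(a), Carath\'eodory's theorem combined with local connectedness gives a continuous extension of a uniformization of each $\Sigma_k$ to its boundary, and minimality then rules out pinch points of that boundary. For axiom~(b), if $\Cl(D_k)\cap\Cl(D_{k'})$ contained two distinct points, the intersection of the two Jordan-curve boundaries would contain an arc $\tau$ along which $\MM$ is locally one-dimensional with $D_k$ and $D_{k'}$ on opposite sides; minimality makes $f$-orbits of interior points of $\tau$ dense in $\MM$, and a straightening argument produces an essential simple closed curve in $\MM$, reducing to the already-excluded type II case. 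The main obstacle throughout is the single rigidity statement invoked to eliminate type II and to establish axioms (a) and (b): a locally connected proper minimal set of a non-resonant torus homeomorphism cannot contain an essential simple closed curve, whose proof requires coupling the topological boundary analysis provided by local connectedness with the dynamical consequences of non-resonance in the cyclic cover.
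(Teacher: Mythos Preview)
Your overall plan---use Theorem~A, eliminate types II and III, then verify the three axioms of Definition~\ref{defn_torus_sierpinski} for type I---matches the paper's sketch, and your elimination of type III and your handling of density, axiom~(c), and axiom~(a) are in the same spirit as the paper. However, two of your arguments have genuine gaps.

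\textbf{Type II.} Your claim that in the cyclic cover the $F$-orbit of a lifted $\tilde\gamma$ is ``dense in the transverse direction'' is not meaningful: the cyclic cover is an infinite cylinder, the transverse direction is a copy of $\R$, and the orbit simply drifts to infinity. Even reinterpreted in $\T^2$, essential curves of class $(p,q)$ have no well-defined transverse coordinate, so irrationality of the drift does not force $\bigcup_n f^n(\gamma)$ to meet the core of $A$; nothing prevents the curves $f^n(\gamma)$ from winding around the wandering annuli $f^m(A)$ indefinitely. A correct and short route: a compact locally connected space has finitely many components, each clopen; $f$ permutes them, so each component $C$ satisfies $f^N(C)=C$ for some $N$, and $C$ is then a connected minimal set for $f^N\in\homeo_*(\T^2)$. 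By Corollary~\ref{cor_connected} applied to $f^N$, $C$ is of type I, so $\T^2\setminus C$ is a union of disks---but $\T^2\setminus C$ contains the essential annulus $A$ (type II) or the doubly essential domain (type III), a contradiction.

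\textbf{Axiom (b).} Your assertion that two points in $\Cl(D_k)\cap\Cl(D_{k'})$ force $\partial D_k\cap\partial D_{k'}$ to contain an arc is false in general: the boundaries of two disjoint Jordan domains may meet in any closed totally disconnected set. Your subsequent ``straightening argument'' producing an essential curve in $\MM$ is not explained, and in any case finding an essential curve in $\MM$ does not reduce to type II, which concerns the complement. The paper argues dynamically instead: take arcs $\eta\subset\Cl(\Sigma_k)$ and $\eta'\subset\Cl(\Sigma_{k'})$ from $z_1$ to $z_2$ meeting the boundaries only at their endpoints; then $\eta\cup\eta'$ bounds a disk $D$ whose interior contains an arc of $\partial\Sigma_k\subset\MM$. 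Since $\diam(\Sigma_j)$ is a null-sequence, $\diam(f^n(D))\to 0$, and recurrence of a point of $D\cap\MM$ forces $f^N(D)\subset D$ for some $N$, contradicting Lemma~\ref{lem_fix_2}.

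One minor slip: in your density argument, $\overline{\Int(\MM)}$ need not be a \emph{proper} subset of $\MM$; the correct proper invariant closed subset is $\partial\MM$, which is nonempty whenever $\MM\neq\T^2$ and $\Int(\MM)\neq\emptyset$.
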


This result was (essentially) proved by A. Bi\'s, H. Nakayama and P. Walczak in~\cite{bis_1}. We show how this result, for our class of homeomorphisms, can be recovered from Theorem A above, and our line of approach is different. Rather than assuming the minimal set is locally connected from the start as in~\cite{bis_1}, here we show that most of the minimal sets of Theorem A are not locally connected, ultimately arriving at the only possible locally connected minimal set, a quasi-Sierpi\'nski set.

Our second result says that the classification of Theorem A is sharp in the following sense. 

\begin{thmb}[Existence of minimal sets]
Every type of minimal set Theorem A allows is realized by homeomorphisms in $\homeo_*(\T^2)$.
\end{thmb}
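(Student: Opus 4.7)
The plan is to realize each of the four cases permitted by Theorem A (namely $\MM = \T^2$ together with Types I, II and III) by a concrete example in $\homeo_*(\T^2)$. Fix throughout $\alpha,\beta \in \R$ with $1,\alpha,\beta$ rationally independent, and let $R = R_{(\alpha,\beta)} \in \homeo_*(\T^2)$ be the rigid rotation. For $\MM = \T^2$ we take $f = R$, whose orbits are all dense. For Type II, let $g \in \homeo(\T^1)$ be a Denjoy circle homeomorphism of rotation number $\beta$ with Cantor minimal set $K \subset \T^1$ (case (ii) of Poincar\'e's theorem), and set $f(x,y) = (x+\alpha, g(y))$. Then $f$ is isotopic to the identity, a direct computation on a product lift yields rotation set $\{(\alpha,\beta)\}$, and the unique minimal set is $\T^1 \times K$, whose complementary components are the essential annuli $\T^1 \times J$ with $J$ running over the wandering intervals of $g$; this gives a type II example.

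For Type I we carry out a Denjoy-like blow-up of a single orbit of $R$. Fix $z_0 \in \T^2$, let $z_n = R^n(z_0)$, and choose positive radii $r_n$ so that the closed metric disks $D_n = \overline{B}(z_n, r_n)$ are pairwise disjoint; such a choice exists thanks to quantitative closest-return estimates for the rigid rotation by $(\alpha,\beta)$. Moore's theorem, applied to the upper semicontinuous cellular decomposition of $\T^2$ consisting of the disks $D_n$ together with singletons elsewhere, yields a continuous surjection $q \colon \T^2 \to \T^2$, homotopic to the identity, with $q^{-1}(z_n) = D_n$ and $q$ injective off $\bigcup_n D_n$. Now define $f \in \homeo_0(\T^2)$ by requiring $q \circ f = R \circ q$ on $\T^2 \setminus \bigcup_n D_n$ and extending, on each $D_n$, by an arbitrary homeomorphism $D_n \to D_{n+1}$ matching the boundary behavior forced by this equation. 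Since $q$ is homotopic to the identity, lifts of $f$ and $R$ to $\R^2$ differ by a bounded amount, so $\rho(f) = \{(\alpha,\beta)\}$; the minimal set $\MM = \T^2 \setminus \bigcup_n \Int(D_n)$ has as complementary components the open disks $\Int(D_n)$, giving a type I example.

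For Type III, Counterexample~1 above already provides a transitive element of $\homeo_*(\T^2)$ with a unique Cantor minimal set, constructed in \cite{beguin}; this is an extension of a Cantor set in which every $\filled(\Lambda_i)$ is a point. To obtain non-trivial extensions with some $\filled(\Lambda_i)$ an acyclic continuum of positive diameter, one may combine the Type I blow-up with such a type III example: pick a single orbit inside the Cantor minimal set and blow up each of its points into a family of pairwise disjoint acyclic continua of summable diameter, as in the preceding paragraph, and use the composition of semiconjugacies.

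The main obstacle is the Type I construction, which requires one to simultaneously exhibit radii $r_n > 0$ making the $D_n$ pairwise disjoint, and to verify that the resulting $f$ lies in $\homeo_*(\T^2)$. The existence of suitable radii is the only genuinely metric input and rests on recurrence estimates for the rigid rotation, while preservation of the rotation vector follows formally from the semiconjugacy $q \circ f = R \circ q$ and the fact that $q$ is homotopic to the identity. The remaining cases are either immediate (rigid rotation for $\MM = \T^2$), a textbook product construction (Type II), or a direct appeal to~\cite{beguin} (Type III).
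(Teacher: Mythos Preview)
Your Type~II and Type~III examples are correct and essentially match the paper's basic constructions: the product of an irrational rotation with a Denjoy counterexample gives the annular Type~II example, and a Cantor minimal set (via~\cite{beguin}, or more simply as a product of two Denjoy counterexamples) gives Type~III. The paper goes further and produces Type~II examples with \emph{both} annuli and disks, and Type~III examples with non-degenerate components, but your simpler versions already suffice for Theorem~B.

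Your Type~I construction, however, contains a fatal error. You claim one can choose positive radii $r_n$ so that the closed round disks $D_n = \overline{B}(z_n, r_n)$ centred at the orbit points $z_n = R^n(z_0)$ are pairwise disjoint. This is impossible. The orbit $\{z_n\}$ is dense in $\T^2$, so for each fixed $n$ the remaining orbit points accumulate on $z_n$, giving $\inf_{m \neq n} d(z_n, z_m) = 0$; but disjointness of $D_n$ from every $D_m$ forces $d(z_n, z_m) > r_n + r_m > r_n$ for all $m \neq n$, hence $r_n \le \inf_{m \neq n} d(z_n, z_m) = 0$. No closest-return estimate rescues this: such estimates bound $d(z_n, z_m)$ below in terms of $|n-m|$, and those bounds necessarily tend to zero. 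Even if the disks existed, your definition of $f$ via the abstract Moore quotient $q$ is incomplete: Moore's theorem gives no control on how $q^{-1} \circ R \circ q$ behaves as one approaches $\partial D_n$ from outside, so there is no reason this map extends continuously to a homeomorphism $\partial D_n \to \partial D_{n+1}$. The same obstruction undermines your sketch of non-trivial Type~III extensions, since an orbit in a Cantor minimal set is again accumulated by the rest of the orbit.

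The paper handles Type~I quite differently. It cites the well-known Sierpi\'nski examples and the quasi-Sierpi\'nski construction of~\cite{bis_1}, and then builds explicitly (Example~\ref{ex_unbdd_disk}) a Type~I minimal set whose complement is a single \emph{unbounded} disk, via a derived-from-Anosov diffeomorphism: one perturbs a linear hyperbolic toral automorphism near its fixed point to create a repelling basin $\Sigma$, and defines $f$ by holonomy along the resulting strong unstable foliation. Separately, the paper's general blow-up procedure (Section~\ref{subsec_constr}) does yield Sierpi\'nski-type examples when applied to the rigid rotation, but it avoids your obstruction by blowing up punctures one at a time with explicit radial maps: each successive blow-up is a homeomorphism of the complement that \emph{moves} the remaining punctures, so the disk at step $n$ is the image of a small annulus under $f_{n-1}^{\pm n}$ rather than a round disk pinned at the original orbit location, and the extension to boundary circles is obtained by tracking an explicit dynamically defined foliation rather than by appeal to an abstract quotient.
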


This result is proved by a number of examples constructed in section~\ref{sec_existence} below. Let us briefly discuss these. It is well-known there exist homeomorphisms $f \in \homeo_*(\T^2)$ for which the minimal set is a Sierpi\'nski set. The first example given, a locally connected quasi-Sierpi\'nski minimal set that is not a Sierpi\'nski set, is known~\cite{bis_1}. The examples constructed below are a minimal set for which the complement is a single unbounded disk (type I), a minimal set for which the complement components are essential annuli and bounded disks (type II) and examples of rather exotic non-trivial extensions of Cantor sets (type III), where the minimal sets constructed are homeomorphic to Cantor dust interspersed with various continua. 

\section{Classification of minimal sets}\label{sec_class}

In what follows, let $f \in \homeo_*(\T^2)$ with $\MM$ a minimal set of $f$. This section is devoted to the proof of Theorem A.

\subsection{Preliminary results}\label{sec_defns}

We first set notation and recollect several basic results to be used in the remainder of this paper. 

\medskip

Let us first recall the following. A vector $(\alpha, \beta) \in \R^2$ is {\em irrational}, if the numbers $1, \alpha, \beta$ are rationally independent; that is, if the only solution over the integers of 
\begin{equation}\label{eq_numbers_condition}
N_1 + N_2 \alpha + N_3 \beta = 0
\end{equation}
is $N_1 = N_2 = N_3 = 0$. The translation $\tau \colon \T^2 \ra \T^2$ corresponding to $(\alpha, \beta)$, 
\begin{equation}\label{eq_tau}
\tau \colon (x,y) \mapsto (x + \alpha, y + \beta)\mod \Z^2, 
\end{equation}
is minimal if and only if the vector $(\alpha, \beta)$ is irrational. The class of homeomorphisms of the torus $\T^2$ isotopic to the identity with rotation set consisting of a single irrational vector will be denoted by $\homeo_*(\T^2)$. It is easy to see that a homeomorphism $f \in \homeo_*(\T^2)$ has no periodic points. \index{irrational vector}

\begin{lem}\label{lem_fix_1}
Let $f \in \homeo_*(\T^2)$. If $X \subset \T^2$ is a bounded connected set, then $f^n(X) \neq X$, for all $n \neq 0$.
\end{lem}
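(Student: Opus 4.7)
The plan is to argue by contradiction: assume there is a bounded connected $X \subset \T^2$ and some $n \neq 0$ with $f^n(X) = X$, then extract a rational vector in the rotation set, contradicting the hypothesis that $\rho(f)$ is a single irrational point.

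First I would lift. Pick a connected component $\widetilde{X}$ of $p^{-1}(X)$; because $X$ is connected and bounded, $\widetilde{X}$ is a bounded connected subset of $\R^2$, and $p^{-1}(X) = \bigcup_{v \in \Z^2}(\widetilde{X}+v)$. Fix a lift $F$ of $f$; then $F^n$ is a lift of $f^n$, so $F^n(\widetilde{X})$ is again a connected component of $p^{-1}(f^n(X)) = p^{-1}(X)$. Hence there is a unique $v \in \Z^2$ with
\[ F^n(\widetilde{X}) \;=\; \widetilde{X} + v. \]
Iterating, and using that translation by integer vectors commutes with $F^n$, one gets $F^{kn}(\widetilde{X}) = \widetilde{X} + kv$ for every $k \geq 1$.

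Next I would read off a rotation vector. Fix any $\widetilde{z} \in \widetilde{X}$. Since $F^{kn}(\widetilde{z}) \in \widetilde{X} + kv$ while $\widetilde{X}$ is bounded, there is a constant $C = \diam(\widetilde{X})$ such that
\[ \bigl\| F^{kn}(\widetilde{z}) - \widetilde{z} - kv \bigr\| \;\leq\; C \qquad \text{for every } k \geq 1. \]
Dividing by $kn$ and letting $k \to \infty$ shows that $v/n$ belongs to $\rho(F)$. On the other hand, $\rho(f) = \rho(F) \bmod \Z^2$ is the single point $(\alpha,\beta)$ with $1,\alpha,\beta$ rationally independent; since $\rho(F)$ is connected (in fact convex, by Misiurewicz–Ziemian) and its image in $\T^2$ is a single point, $\rho(F)$ itself is a single point, equal to $(\alpha,\beta)+w$ for some $w \in \Z^2$. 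Therefore
\[ \frac{v}{n} \;=\; (\alpha,\beta) + w. \]
The right-hand side has irrational coordinates while the left-hand side is rational, a contradiction. Hence no such $n \neq 0$ exists.

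The only slightly delicate point is justifying that $\rho(F)$ is the singleton $\{(\alpha,\beta)+w\}$ rather than merely projecting to $(\alpha,\beta)$ modulo $\Z^2$; that follows from connectedness of $\rho(F)$ together with the fact that a coset of $\Z^2$ in $\R^2$ is discrete. Everything else in the argument is routine book-keeping with lifts, and I do not anticipate any substantive obstacle.
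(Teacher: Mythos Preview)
Your argument is correct and follows essentially the same route as the paper's proof: lift $X$ to a bounded $\widetilde{X}$, observe that $F^n$ carries $\widetilde{X}$ to an integer translate of itself, and deduce that the rotation vector along this orbit is rational, contradicting irrationality of $(\alpha,\beta)$. The paper abbreviates the bookkeeping by asserting one can choose the lift $F$ so that $F^N(\widetilde{X}) = \widetilde{X}$ (forcing $\rho(F,\widetilde{z}) = (0,0)$), whereas you more carefully track the translate $v$ and invoke connectedness of $\rho(F)$; both arrive at the same contradiction.
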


\begin{proof}
If $X \subset \T^2$ is bounded and $f^N(X) = X$, for some $N \neq 0$, we can take a lift $F$ of $f$ and a lift $\widetilde{X}$ of $X$ such 
that $F^N(\widetilde{X}) = \widetilde{X}$. Let $\widetilde{z} \in \widetilde{X}$. As $\widetilde{X}$ is bounded, 
we must have that $\rho(F, \widetilde{z}) = (0,0)$ and thus $\rho(f,z) = (0,0)\mod \Z^2$, where $z = p(\widetilde{z})$, 
contrary to our assumption on the rotation set.
\end{proof}

In other words, if $X \subset \T^2$ is a connected and $f$-invariant set, then $X$ is necessarily unbounded. In what follows, let 
\begin{equation}\label{eq_int_trans}
T_{p,q} \colon \R^2 \ra \R^2, ~~ T_{p,q}(x,y) = (x+p, y+q),
\end{equation}
where $(p, q) \in \Z^2$.

\begin{lem}\label{lem_fix_2}
Let $f \in \homeo_*(\T^2)$ and $F$ a lift of $f$. Let $D \subset \R^2$ be a closed topological disk. 
Then there exists no $N \neq 0$, and $(p,q) \in \Z^2$, such that 
\begin{equation*}
F^N(D) \subseteq T_{p,q}(D) \quad \textup{or} \quad T_{p,q}(D) \subseteq F^N(D).
\end{equation*}
\end{lem}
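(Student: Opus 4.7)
The plan is to reduce both inclusion cases to a single Brouwer fixed point argument that produces a point with a rational rotation vector under $F$, contradicting the standing assumption that $1,\alpha,\beta$ are rationally independent. The crucial observation is that integer translations $T_{p,q}$ commute with any lift $F$, so $G := T_{-p,-q} \circ F^N$ is itself a lift of $f^N$ and conjugating or composing with $T_{\pm p,\pm q}$ turns inclusions of the form $F^N(D) \subseteq T_{p,q}(D)$ into self-maps of $D$.

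Concretely, I would argue by contradiction. Assume first $F^N(D) \subseteq T_{p,q}(D)$. Then $G(D) = T_{-p,-q}(F^N(D)) \subseteq D$, so $G$ continuously maps the closed topological disk $D$ into itself. Since $D$ is homeomorphic to $\overline{\mathbb{D}^2}$, Brouwer's fixed point theorem produces $\widetilde{z}\in D$ with $G(\widetilde{z})=\widetilde{z}$, equivalently
\begin{equation*}
F^N(\widetilde{z})=\widetilde{z}+(p,q).
\end{equation*}
Iterating yields $F^{kN}(\widetilde{z})=\widetilde{z}+k(p,q)$ for every $k\in\N$, hence
\begin{equation*}
\lim_{k\to\infty}\frac{F^{kN}(\widetilde{z})-\widetilde{z}}{kN}=\left(\tfrac{p}{N},\tfrac{q}{N}\right)\in\rho(F).
\end{equation*}
Since $\rho(f)$ is a single irrational point mod $\Z^2$ and $\rho(F)$ is a single point in $\R^2$, this forces $(\alpha,\beta)\equiv(p/N,q/N)\bmod \Z^2$, which contradicts the rational independence of $1,\alpha,\beta$ in \eqref{eq_numbers_condition}.

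For the reverse inclusion $T_{p,q}(D) \subseteq F^N(D)$, I would apply $F^{-N}$ and use that $F^{-N}$ commutes with $T_{p,q}$ (as $F^{-N}$ is again a lift) to obtain $T_{p,q}(F^{-N}(D)) \subseteq D$, i.e.\ $H(D) \subseteq D$ for $H := T_{p,q}\circ F^{-N}$. Brouwer again produces a fixed point $\widetilde{z}\in D$ of $H$, which satisfies $F^{-N}(\widetilde{z})=\widetilde{z}-(p,q)$ and therefore $F^{N}(\widetilde{z})=\widetilde{z}+(p,q)$; the same contradiction as above closes the proof.

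The only real obstacle is verifying that Brouwer applies, which is automatic since $D$ is a closed topological disk and both $G$ and $H$ are continuous; apart from that the argument is a direct application of the defining properties of $\homeo_*(\T^2)$ and mirrors the rotation-vector computation used already in Lemma~\ref{lem_fix_1}.
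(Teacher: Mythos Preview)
Your proof is correct and follows essentially the same route as the paper: reduce to a self-map of the closed disk $D$ by composing with an integer translation, apply Brouwer, and contradict the standing hypothesis on $\rho(f)$. The only cosmetic difference is that the paper phrases the reduction as ``choosing a different lift'' and then invokes directly that $f\in\homeo_*(\T^2)$ has no periodic points, whereas you make the auxiliary map $G=T_{-p,-q}\circ F^N$ explicit and unwind the contradiction at the level of the rotation vector; your version is in fact slightly more precise, since changing the lift of $f$ shifts $F^N$ only by multiples of $N$, not by an arbitrary $(p,q)$.
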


\begin{proof}
Suppose that there exists an $N \neq 0$ and $(p,q) \in \Z^2$ such that $F^N(D) \subseteq T_{p,q}(D)$. Choosing a different lift 
$\widehat{F}$ if necessary, we may assume that $\widehat{F}^N(D) \subseteq D$. By the Brouwer Fixed Point Theorem, $\widehat{F}^N$ has a fixed point
on $D$, and thus $f$ has a periodic point, contrary to our assumptions. The case where $T_{p,q}(D) \subseteq F^N(D)$ follows by considering the inverse $F^{-1}$.
\end{proof}

Next, we turn to the topology of domains in the torus. In the subsequent proof, the various topological types of domains on the torus play an important role. In what follows, a {\em domain}\index{domain} is an open connected set. 
Let $\gamma \subset \T^2$ be an essential simple closed curve. We say the curve $\gamma$ has homotopy type $(p,q)$ if $\gamma$ lifts to a curve $\widetilde{\gamma} \in \R^2$ 
such that, up to a suitable translation, $\widetilde{\gamma}$ connects the lattice points $(0,0) \in \Z^2$ and $(p,q) \in \Z^2$ with $p$ and $q$ coprime. Then $\widetilde{\gamma}$ is periodic in the sense that
\begin{equation}\label{eq_trans_curve}
\widetilde{\gamma} = \bigcup_{n \in \Z} T^n_{p,q}(\eta), 
\end{equation}
where $\eta \subset \widetilde{\gamma}$ is the arc connecting $(0,0)$ and $(p,q)$. Let $D \subset \T^2$ be a domain. The inclusion $D \hookrightarrow \T^2$ naturally induces an injection of $\pi_1(D)$ into $\pi_1(\T^2)$. This gives rise to the following.

\begin{defn}[Types of domains]\label{defn_domains_types}
A domain $D \subset \T^2$ is said to be {\em trivial}, {\em essential} or {\em doubly essential} according to whether the inclusion of 
$\pi_1(D)$ into $\pi_1(\T^2)$ is isomorphic to $0, \Z$ or $\Z^2$ respectively. 
\end{defn}

\begin{defn}\label{defn_char}
An essential domain $D \subset \T^2$ has characteristic $(p,q)$ if an essential closed curve $\gamma \subset D$ has homotopy type $(p,q)$.
\end{defn}

Note that definition~\ref{defn_char} is well-defined, in the sense that every other essential simple closed curve in $D$ must have the same homotopy type (as otherwise the domain $D$ would be doubly essential). The following lemma relates the notion of a trivial and essential domain to that of a disk and essential annulus in the torus respectively.

\begin{lem}\label{lem_prel_domains}
A domain $D \subset \T^2$, such that $\widetilde{D}$ is simply connected, is trivial (resp. essential) if and only if it is a 
disk (resp. essential annulus) in the torus.
\end{lem}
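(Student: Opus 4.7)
The plan is to use the covering $p\colon \R^2 \to \T^2$ to convert the $\pi_1$-condition defining triviality and essentiality of $D$ into a purely topological statement about $\widetilde{D}$ and the translations that stabilise it. First I would identify $\pi_1(\T^2)$ with the group of integer translations $\Z^2 \subset \homeo(\R^2)$ and record the standard covering-space fact that the subgroup $H \subset \Z^2$ of translations $T_{p,q}$ with $T_{p,q}(\widetilde{D}) = \widetilde{D}$ coincides with the image of the inclusion-induced map $\pi_1(D) \to \pi_1(\T^2)$, and that $p|_{\widetilde{D}} \colon \widetilde{D} \to D$ is a regular covering with deck group $H$. The only nontrivial input beyond this formal bookkeeping is the topological Riemann mapping theorem: every open simply connected proper subset of $\R^2$ is homeomorphic to $\D^2$.

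For the forward direction in the trivial case, $H = 0$, so $p|_{\widetilde{D}}$ is a homeomorphism. (If $p(\widetilde{x}_1) = p(\widetilde{x}_2)$ with $\widetilde{x}_i \in \widetilde{D}$, then some $T_{p,q}(\widetilde{x}_1) = \widetilde{x}_2$ forces the component $T_{p,q}(\widetilde{D})$ of $p^{-1}(D)$ to meet $\widetilde{D}$, hence equal $\widetilde{D}$, hence $(p,q) = (0,0)$.) Combined with the topological Riemann mapping theorem applied to $\widetilde{D}$, this gives $D \approx \widetilde{D} \approx \D^2$, so $D$ is a disk. In the essential case, $H$ is an infinite cyclic subgroup of $\Z^2$ generated by a primitive vector $(p,q)$. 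The free and properly discontinuous action of $\Z^2$ on $\R^2$ restricts to a free and properly discontinuous action of $\langle T_{p,q}\rangle \cong \Z$ on $\widetilde{D} \approx \R^2$, whose quotient is $D$; any such $\Z$-quotient of $\R^2$ is homeomorphic to the open annulus $\mathbb{S}^1 \times \R$, so $D$ is an annulus. To verify essentiality as an annulus, I would lift a generator $\gamma$ of $\pi_1(D) \cong \Z$ to an arc in $\widetilde{D}$ from some $\widetilde{z}$ to $T_{p,q}(\widetilde{z})$; its class in $\pi_1(\T^2)$ is $(p,q) \neq (0,0)$, so $\pi_1(D) \hookrightarrow \pi_1(\T^2)$ is injective.

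The converse direction is immediate from the definitions: if $D$ is a disk then $\pi_1(D) = 0$, and if $D$ is an essential annulus then the inclusion induces an injection $\Z = \pi_1(D) \hookrightarrow \pi_1(\T^2)$, matching the definitions of trivial and essential domain respectively. The main obstacle is not deep, only organisational: making sure the covering-theoretic identification of the stabiliser $H$ is applied correctly, and invoking the topological Riemann mapping theorem (together with the fact that any free, properly discontinuous, orientation-preserving $\Z$-quotient of $\R^2$ is an open annulus) as classical results rather than reproving them.
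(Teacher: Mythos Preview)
Your proof is correct and follows the same overall architecture as the paper's: identify the stabiliser of $\widetilde{D}$ in $\Z^2$ with the image of $\pi_1(D)$ in $\pi_1(\T^2)$, uniformise $\widetilde{D}$, and pass to the quotient. The difference lies in the essential case. The paper uses the complex-analytic Riemann mapping theorem to get a biholomorphism $\phi\colon \D^2 \to \widetilde{D}$, conjugates $T_{p,q}$ to a fixed-point-free M\"obius transformation $\mu$ of $\D^2$, and then invokes the classification of M\"obius transformations (hyperbolic or parabolic) to identify $\D^2/\langle\mu\rangle$ as an annulus or punctured disk. You instead use the topological Riemann mapping theorem and the classification of free, properly discontinuous, orientation-preserving $\Z$-actions on $\R^2$ (equivalently, of surfaces with infinite cyclic fundamental group and contractible universal cover). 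Your route is more purely topological and arguably more direct for a statement that is itself purely topological; the paper's route is slightly more concrete in that it names the two possible conformal types, but the extra complex-analytic machinery is not really needed for the conclusion.
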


\begin{proof}
As the {\em if} part is evident, we need only prove the {\em only if} part. First suppose that $D$ is trivial and let $\widetilde{D}$ a lift of $D$.
By the Riemann mapping theorem, there exists a biholomorphism $\phi \colon \D^2 \ra \widetilde{D}$,  where $ \D^2 \subset \R^2$ is the unit (Poincar\'e) disk. As $D$ is trivial, no two points in $\widetilde{D}$ are identified under the action of the translation group 
$\Z^2 \subset \R^2$ and thus $p|_{\widetilde{D}}$ is injective, where $p \colon \R^2 \ra \T^2$ is the projection mapping. Therefore, we have that $p \circ \phi \colon \D^2 \ra \T^2$ with $p \circ \phi(\D^2) = D$, and thus $D \subset \T^2$ is a disk.

Next, suppose that $D$ is essential. Then there exists a unique pair $(p,q) \in \Z^2$, with $\gcd(p,q) =1$, such that the translation 
$T_{p,q}$ leaves $\widetilde{D}$ invariant, i.e. $T_{p,q}(\widetilde{D}) = \widetilde{D}$. Further, as $\widetilde{D}$ is simply connected, 
again by the Riemann mapping theorem, there exists a biholomorphism $\phi \colon \D^2 \ra \widetilde{D}$. 
As $T_{p,q} \colon \widetilde{D} \ra \widetilde{D}$ is a biholomorphism, the map 
\[ \mu \colon \D^2 \longrightarrow \D^2, ~\mu = \phi^{-1} \circ T_{p,q} \circ \phi \] 
is itself a biholomorphism and thus a M\"obius transformation. Moreover, as $T_{p,q}$ does not fix any point in $\widetilde{D}$, $\mu$ does not fix any point in $\D^2$ and thus $\mu$ is either a hyperbolic or a parabolic M\"obius transformation. 

As is well known, $\D^2 \slash \langle \mu \rangle$ is conformally equivalent to an annulus if $\mu$ is hyperbolic, and conformally equivalent to the once punctured disk $\D^2 \setminus \{0\}$ if $\mu$ is parabolic (see e.g.~\cite{abikoff}). 
Thus both are topologically equivalent to an annulus $\mathbb{S}^1 \times (0,1)$, therefore so is $\widetilde{D} \slash \langle T_{p,q} \rangle$. As $\widetilde{D}$ admits no translations other than (multiples of) $T_{p,q}$ that leave $\widetilde{D}$ invariant, 
the continuous projection $p$ restricted to  $\widetilde{D} \slash \langle T_{p,q} \rangle$ into the torus $\T^2$ is an injection and thus $D$ is indeed topologically equivalent to the annulus $\mathbb{S}^1 \times (0,1)$.
\end{proof}

We recall some standard results from decomposition theory, to be used in the proof of Theorem A. In the following statements, let $M$ be a closed surface.

\begin{defn}\label{defn_upper_semi}
A collection $\UU = \{ \UU_i\}_{i \in I}$ of continua in a surface $M$ is said to
be {\em upper semi-continuous} if the following holds:
\begin{enumerate}
\item[(1)] If $\UU_i, \UU_j \in \UU$, then $\UU_i \cap \UU_j = \emptyset$.
\item[(2)] If $\UU_i \in \UU$, then $\UU_i$ is non-separating. 
\item[(3)] We have that $M = \bigcup_{i \in I} \UU_i$.
\item[(4)] If $\UU_{i_k}$ with $k \in \N$ is a sequence that has the Hausdorff limit $\CC$, then there
exists $\UU_j \in \UU$ such that $\CC \subset \UU_j$.
\end{enumerate}
\end{defn}

In a compact metric space, every Hausdorff limit of continua is again a continuum. We have the following classical result, see for example~\cite{whyburn} or~\cite{daver}.

\begin{moore}
Let $\UU$ be an upper semi-continuous decomposition of $M$ so that every element of $\UU$ is acyclic. Then there is a continuous map
$\phi: M \ra M$ that is homotopic to the identity and such that for every $z \in M$, we have that $\phi^{-1}(z) = \UU_i$ for some element 
$\UU_i \in \UU$.
\end{moore}

The following follows from Moore's Theorem, see also~\cite{bruin}. We give a proof here for the convenience of the reader.

\begin{lem}\label{lem_moore_semic_conj}
Given an upper semicontinuous decomposition $\UU$ of $\T^2$ into acyclic elements and a $f \in \homeo_0(\T^2)$, with the property that $f$ sends elements of $\UU$ onto elements of $\UU$. 
Then the map $\widehat{f} \colon \T^2 \ra \T^2$ defined by $\phi \circ f(z) = \widehat{f} \circ \phi(z)$, for every $z \in \T^2$, is an element of $\homeo_0(\T^2)$. In other words, $f$ is semi-conjugate to $\widehat{f}$ through $\phi$.
\end{lem}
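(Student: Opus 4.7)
The plan is to verify in turn that $\widehat{f}$ is well-defined, continuous, bijective, and isotopic to the identity. Throughout, I would make free use of the map $\phi\colon \T^2 \ra \T^2$ supplied by Moore's Theorem, which is continuous, surjective, homotopic to $\Id$, and has the property that the fibre $\phi^{-1}(w)$ is an element of $\UU$ for every $w \in \T^2$.

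For well-definedness of $\widehat{f}$, suppose $\phi(z_1) = \phi(z_2) = w$. Then $z_1, z_2$ both lie in $\phi^{-1}(w) = \UU_i$ for some $\UU_i \in \UU$; since $f$ sends $\UU_i$ onto some $\UU_j \in \UU$, both $f(z_1)$ and $f(z_2)$ lie in $\UU_j$, hence $\phi(f(z_1)) = \phi(f(z_2))$. Thus setting $\widehat{f}(w) := \phi(f(z))$ for any choice of $z \in \phi^{-1}(w)$ is unambiguous. For continuity, $\phi$ is a continuous surjection from a compact space onto a Hausdorff space, hence a (closed) quotient map; since the composition $\widehat{f} \circ \phi = \phi \circ f$ is continuous, the universal property of quotient maps yields that $\widehat{f}$ is continuous.

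For bijectivity, the hypothesis on $f$ gives a permutation $\sigma$ of the index set $I$ with $f(\UU_i) = \UU_{\sigma(i)}$; since $f$ is a homeomorphism of $\T^2$, $f^{-1}(\UU_j) = \UU_{\sigma^{-1}(j)}$, so $f^{-1}$ also sends elements of $\UU$ onto elements of $\UU$. Applying the previous two steps to $f^{-1}$ produces a continuous map $\widehat{f^{-1}}$ with $\widehat{f^{-1}} \circ \phi = \phi \circ f^{-1}$, and a short computation together with surjectivity of $\phi$ shows $\widehat{f} \circ \widehat{f^{-1}} = \Id = \widehat{f^{-1}} \circ \widehat{f}$. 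A continuous bijection of the compact Hausdorff space $\T^2$ is automatically a homeomorphism, so $\widehat{f} \in \homeo(\T^2)$.

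The step I expect to require the most care is showing that $\widehat{f}$ actually lies in $\homeo_0(\T^2)$, since $\widehat{f}$ is built abstractly from the decomposition and a priori could have a nontrivial action on topology. For this I would pass to the fundamental group: applying $\pi_1$ to the semi-conjugacy relation $\widehat{f} \circ \phi = \phi \circ f$ yields $\widehat{f}_* \circ \phi_* = \phi_* \circ f_*$ on $\pi_1(\T^2) \cong \Z^2$. Since $\phi$ is homotopic to $\Id$, $\phi_* = \Id$, and since $f \in \homeo_0(\T^2)$, $f_* = \Id$; hence $\widehat{f}_* = \Id$. For the torus, a homeomorphism belongs to $\homeo_0(\T^2)$ precisely when it induces the identity on $\pi_1(\T^2)$, so $\widehat{f} \in \homeo_0(\T^2)$ as required.
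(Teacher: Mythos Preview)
Your argument is correct and follows essentially the same route as the paper's proof: both use that $\phi$ is a closed quotient map (the paper phrases this as ``preimages of closed sets under $\widehat{f}$ are closed because $\widehat{f}^{-1}(\CC) = \phi((\phi \circ f)^{-1}(\CC))$'', which is exactly the content of the universal property you invoke), and both then appeal to compactness to upgrade a continuous bijection to a homeomorphism.

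Your treatment is in fact more complete than the paper's in two respects. First, you spell out well-definedness and construct an explicit inverse $\widehat{f^{-1}}$, whereas the paper simply asserts injectivity and leaves surjectivity implicit. Second, you supply the $\pi_1$ argument showing $\widehat{f} \in \homeo_0(\T^2)$; the paper's proof omits this step entirely, even though the lemma claims it. Your use of the identification of the mapping class group of $\T^2$ with $\mathrm{GL}(2,\Z)$ via the action on $\pi_1$ is the natural way to fill this gap.
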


\begin{proof}
By Moore's Theorem, the map $\phi \colon \T^2 \ra \T^2$ sending elements of $\UU$ to points is continuous. The map $\widehat{f}$, where $\phi \circ f(z) = \widehat{f} \circ \phi(z)$ is one-to-one as $f$ sends elements of $\UU$ onto 
elements of $\UU$. To prove that $\widehat{f}$ is continuous, we observe that, whenever $\CC \subset \T^2$ is closed, then so are $\CC'=(\phi \circ f)^{-1}(\CC)$ and $\CC'' = \phi (\CC')$, since $\phi$ and $f$ are continuous and 
$\T^2$ is compact. Since $\CC''=\widehat{f}^{-1}(\CC)$, this shows that $\widehat{f}$ is continuous and hence a homeomorphism, again by compactness of $\T^2$.
\end{proof}

\subsection{Topology of the domains $\Sigma_k$}\label{subsec_top_Sigma}

In what follows, let $\Sigma = \Sigma_k$ be any element of $\{ \Sigma_k\}$, the collection of connected components of the complement of $\MM$,
a minimal set of an element $f \in \homeo_*(\T^2)$. Further, let $\widetilde{d}(\cdot, \cdot)$ be the standard Euclidean metric on $\R^2$ 
and let $d(\cdot, \cdot)$ the (induced) metric on $\T^2$.

\begin{lem}\label{lem_wander}
If $f^n(\Sigma) \cap \Sigma = \emptyset$ for all $n \neq 0$, then $\Sigma$ is a disk or an essential annulus.
\end{lem}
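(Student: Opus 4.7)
The plan is to show that $\Sigma$ is neither doubly essential nor has a non-simply-connected lift; by Lemma~\ref{lem_prel_domains}, $\Sigma$ is then either a disk or an essential annulus.

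If $\Sigma$ were doubly essential, one could choose simple closed curves $\gamma_1, \gamma_2 \subset \Sigma$ whose homology classes $[\gamma_1], [\gamma_2] \in H_1(\T^2;\Z) \cong \Z^2$ are linearly independent; such representatives exist because the map $\pi_1(\Sigma) \to \pi_1(\T^2)$ is surjective and primitive homology classes admit simple closed-curve representatives. Since $f$ is isotopic to the identity, $[f^n(\gamma_2)] = [\gamma_2]$ for every $n$, so the algebraic intersection $[\gamma_1] \cdot [f^n(\gamma_2)] \neq 0$ forces $\gamma_1 \cap f^n(\gamma_2) \neq \emptyset$. This contradicts the wandering hypothesis, since $\gamma_1 \subset \Sigma$ and $f^n(\gamma_2) \subset f^n(\Sigma)$ are disjoint for $n \neq 0$.

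To show $\widetilde{\Sigma}$ is simply connected, I suppose for contradiction that $\widetilde{\Sigma}$ has a bounded complementary component $B$. I would construct a simple closed curve $\gamma \subset \widetilde{\Sigma}$ enclosing $B$, bounding a closed disk $D \subset \R^2$ with $B \subset \Int(D)$ and with $p|_{\overline{D}}$ injective; then $\Delta := p(D) \subset \T^2$ is a closed topological disk bounded by the null-homotopic simple closed curve $p(\gamma) \subset \Sigma$, and $p(B) \subset \Int(\Delta)$. For $N \neq 0$, the wandering hypothesis gives $f^N(p(\gamma)) \cap p(\gamma) = \emptyset$, so the disjoint null-homotopic Jordan curves $p(\gamma)$ and $f^N(p(\gamma))$ bound disks $\Delta$ and $f^N(\Delta)$ that are either nested or disjoint in $\T^2$. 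In the nested case, $f^N(\Delta) \subset \Delta$ (or $f^{-N}(\Delta) \subset \Delta$), and Brouwer's theorem yields a periodic point of $f$, contradicting the irrationality of the rotation vector. In the always-disjoint case, pick $z \in \partial p(B)$; this is non-empty since $p(B)$ is a proper closed connected subset of $\T^2$, and $z \in \MM \cap \Int(\Delta)$ since $p(\partial B) \subset \MM$ and $p(B) \subset \Int(\Delta)$. By minimality of $\MM$, there is a subsequence with $f^{n_k}(z) \to z$; but $f^{n_k}(z) \in f^{n_k}(\Delta) \subset \T^2 \setminus \Delta$ for $n_k \neq 0$, and $\overline{\T^2 \setminus \Delta} = \T^2 \setminus \Int(\Delta)$ does not contain $z$, a contradiction.

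The main technical difficulty I expect is the construction of the enclosing Jordan curve $\gamma \subset \widetilde{\Sigma}$ around $B$ with $p|_{\overline{D}}$ injective. When $B$ is large relative to the fundamental domain, or when the complementary components of $\widetilde{\Sigma}$ accumulate on one another, a direct geometric construction may fail; in such cases one must first reduce to a sufficiently small bounded complementary component (for instance by applying a suitable iterate of $F^{-1}$), or collapse $p(B)$ to a point using an upper semi-continuous decomposition in the spirit of Moore's Theorem and work in the quotient.
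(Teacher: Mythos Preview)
Your overall strategy matches the paper's: rule out the doubly essential case by an intersection-number argument, then show the lift $\widetilde{\Sigma}$ is simply connected via a recurrence/Brouwer dichotomy. The doubly essential part is fine and essentially identical to the paper.

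The difference, and the source of your acknowledged difficulty, is that you descend to $\T^2$ and therefore need $p|_{\overline D}$ injective. This is a genuine obstacle: if $\Sigma$ is essential, or if $B$ is large, there is no reason a Jordan curve in $\widetilde{\Sigma}$ enclosing $B$ bounds a disk that embeds under $p$, and neither of your proposed fixes is convincing. Applying iterates of $F^{-1}$ sends $\widetilde{\Sigma}$ to a lift of a \emph{different} complementary domain $f^{-n}(\Sigma)$, so it does not shrink complementary components of $\widetilde{\Sigma}$; and invoking a Moore-type collapse is heavy machinery for what should be an elementary step.

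The paper sidesteps the problem by staying in $\R^2$ throughout. One takes any simple closed curve $\gamma\subset\widetilde\Sigma$ bounding a disk $D_\gamma$ with $D_\gamma\cap p^{-1}(\MM)\neq\emptyset$ (no size restriction). Recurrence of a point of $\MM$ in $p(D_\gamma)$ gives $n_k$ and $(p_k,q_k)\in\Z^2$ with $F^{n_k}(D_\gamma)\cap T_{p_k,q_k}(D_\gamma)\neq\emptyset$. Then either one disk is contained in a translate of the other, which is excluded by Lemma~\ref{lem_fix_2} (the Brouwer argument applied to $F^N$ composed with an integer translation), or $F^{n_k}(\gamma)\cap T_{p_k,q_k}(\gamma)\neq\emptyset$, which forces $f^{n_k}(\Sigma)\cap\Sigma\neq\emptyset$ since both curves project into $\Sigma$. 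No embedding of $D_\gamma$ into $\T^2$ is ever needed; the role of your injectivity hypothesis is played by comparing $F^{n_k}(D_\gamma)$ not with $D_\gamma$ itself but with its integer translates. Recasting your argument in the cover in this way removes the gap entirely.
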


\begin{proof}
First, suppose that $\Sigma$ is doubly essential and let $\gamma, \gamma' \subset \Sigma$ be two non-homotopic essential simple closed curves. 
As $f$ is homotopic to the identity, the homotopy classes of $f(\gamma)$ and $\gamma$ are equal. As every two non-homotopic simple closed curves on the torus intersect, we have that $f(\gamma) \cap \gamma' \neq \emptyset$. Therefore $f(\Sigma) \cap \Sigma \neq \emptyset$ and thus $\Sigma$ has to be either trivial or essential. 

Thus let $\Sigma$ be a trivial or essential domain and let $\widetilde{\Sigma}$ be a lift of $\Sigma$. In order to show that $\Sigma$ is a disk or essential annulus respectively, by Lemma~\ref{lem_prel_domains}, it suffices to show that $\widetilde{\Sigma}$ is simply connected.
To prove this, suppose to the contrary that $\widetilde{\Sigma}$ is not simply connected. Then there exists a simple closed curve 
$\gamma \subset \widetilde{\Sigma}$ such that the open disk $D_{\gamma}$ with boundary curve $\gamma$ has the property that 
\begin{equation}
D_{\gamma} \cap p^{-1}(\MM) \neq \emptyset.
\end{equation}
Let $F$ be a lift of $f$. As every point in $\MM$ is recurrent, there exists a subsequence $n_k$ such that $f^{n_k}(z) \ra z$ for 
$k \ra \infty$. Therefore, by passing to a subsequence if necessary, we may assume that for all $k \geq 1$, we have that 
\begin{equation}\label{eq_top_sigma_1}
F^{n_k}(D_{\gamma}) \cap T_{p_k,q_k}(D_{\gamma}) \neq \emptyset,
\end{equation}
for certain $(p_k,q_k) \in \Z^2$. Given~\eqref{eq_top_sigma_1}, there are two possibilities. For a given $k \geq 1$, we have that either 
\begin{enumerate}
\item[\textup{(a)}] $F^{n_k} ( D_{\gamma} ) \subset T_{p_k,q_k}(D_{\gamma})$ or $T_{p_k,q_k}(D_{\gamma}) \subset F^{n_k} ( D_{\gamma} )$, or
\item[\textup{(b)}] $F^{n_k} ( \gamma ) \cap T_{p_k,q_k}(\gamma) \neq \emptyset$.
\end{enumerate}
Case (a) can be excluded as, by Lemma~\ref{lem_fix_2}, this yields periodic points for $f$. 
Furthermore, case (b) is ruled out as this implies that 
\begin{equation}
F^{n_k} ( \widetilde{\Sigma}) \cap T_{p_k,q_k}( \widetilde{\Sigma} ) \neq \emptyset,
\end{equation}
implying that $f^{n_k}(\Sigma) \cap \Sigma \neq \emptyset$, contrary to our assumption. Therefore, $\widetilde{\Sigma}$ must be simply 
connected indeed.
\end{proof}

In what follows, a fundamental domain of $\T^2$ is defined as the standard square $[0,1] \times [0,1] \subset \R^2$ and the integer translates 
thereof.

\begin{lem}\label{lemma_disk}
If $\Sigma$ is trivial, then $\Sigma$ is a disk. Moreover, if $\Sigma$ is bounded, then $f^n(\Sigma) \cap \Sigma = \emptyset$ for all $n \neq 0$.
\end{lem}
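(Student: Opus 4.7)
The plan has two parts. The moreover clause is immediate: if $\Sigma$ is bounded and $f^n(\Sigma) \cap \Sigma \neq \emptyset$ for some $n \neq 0$, then $\Sigma$ and $f^n(\Sigma)$ are connected components of $\T^2 \setminus \MM$ sharing a point and so must coincide; but $f^n(\Sigma) = \Sigma$ with $\Sigma$ a bounded connected set directly contradicts Lemma~\ref{lem_fix_1}.

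For the first assertion, by Lemma~\ref{lem_prel_domains} it suffices to prove that the lift $\widetilde{\Sigma}$ is simply connected. I argue by contradiction, imitating the recurrence scheme from the proof of Lemma~\ref{lem_wander}. Assume $\widetilde{\Sigma}$ is not simply connected; then there is a simple closed curve $\widetilde{\gamma} \subset \widetilde{\Sigma}$ bounding a bounded open disk $\widetilde{D} \subset \R^2$ with $\widetilde{D} \not\subset \widetilde{\Sigma}$. Since $\partial \widetilde{\Sigma} \subset p^{-1}(\MM)$, the disk $\widetilde{D}$ meets $p^{-1}(\MM)$, so choose $\widetilde{z} \in \widetilde{D} \cap p^{-1}(\MM)$. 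Recurrence of $z = p(\widetilde{z})$ in $\MM$ yields $n_k \to \infty$ and $(p_k, q_k) \in \Z^2$ with $F^{n_k}(\widetilde{z}) - (p_k, q_k) \to \widetilde{z}$, so $F^{n_k}(\widetilde{D}) \cap T_{p_k, q_k}(\widetilde{D}) \neq \emptyset$ for large $k$. By Lemma~\ref{lem_fix_2} neither disk contains the other, so the boundaries cross and $F^{n_k}(\widetilde{\gamma}) \cap T_{p_k, q_k}(\widetilde{\gamma}) \neq \emptyset$, forcing $F^{n_k}(\widetilde{\Sigma}) \cap T_{p_k, q_k}(\widetilde{\Sigma}) \neq \emptyset$. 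Triviality now enters decisively: the $\Z^2$-translates of $\widetilde{\Sigma}$ are pairwise disjoint lifts of $\Sigma$, so the two intersecting lifts must actually coincide, giving $F^{n_k}(\widetilde{\Sigma}) = T_{p_k, q_k}(\widetilde{\Sigma})$ and $f^{n_k}(\Sigma) = \Sigma$ for all large $k$.

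The final contradiction is Lemma~\ref{lem_fix_1}. When $\Sigma$ is bounded this is direct. When $\Sigma$ is unbounded I plan to extract a bounded $f^{n_k}$-invariant piece from the hole: the connected component $H_0$ of $\R^2 \setminus \widetilde{\Sigma}$ containing $\widetilde{z}$ is trapped inside $\widetilde{D}$ (it cannot cross $\widetilde{\gamma} \subset \widetilde{\Sigma}$) and hence is bounded, while the lift $\widehat{F}_k := T_{-p_k, -q_k} \circ F^{n_k}$ of $f^{n_k}$ preserves $\widetilde{\Sigma}$ and therefore permutes the bounded components of $\R^2 \setminus \widetilde{\Sigma}$. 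Since $\widehat{F}_k(\widetilde{z}) \to \widetilde{z}$, choosing $\widetilde{z}$ in the interior of $H_0$ yields $\widehat{F}_k(H_0) = H_0$ for large $k$, and the projection $p(H_0)$ is then a bounded connected $f^{n_k}$-invariant subset of $\T^2$, contradicting Lemma~\ref{lem_fix_1}. Once $\widetilde{\Sigma}$ is simply connected, Lemma~\ref{lem_prel_domains} concludes the proof, the essential annulus alternative being excluded because a trivial domain cannot be essential. The main obstacle is precisely this last step in the unbounded subcase: it requires $H_0$ to have nonempty interior in $\R^2$ (which I arrange by taking the loop $\widetilde{\gamma}$ small enough around a single hole of $\widetilde{\Sigma}$) and that $H_0$ be disjoint from its nontrivial $\Z^2$-translates so $p(H_0)$ really is bounded in the sense of the paper, which again follows from taking $\widetilde{D}$ small.
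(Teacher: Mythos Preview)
Your proof of the ``moreover'' clause and the recurrence scheme leading to $F^{n_k}(\widetilde\Sigma)=T_{p_k,q_k}(\widetilde\Sigma)$ are correct, and your route in the unbounded case is genuinely different from the paper's: the paper uses that $\rho(f)\neq 0$ to force $(p_k,q_k)\neq(0,0)$ and then produces an essential closed curve in $\Sigma$ by connecting $\widetilde w\in F^{n_k}(\gamma)\cap T_{p_k,q_k}(\gamma)$ to its translate $T_{p_k,q_k}^{-1}(\widetilde w)\in\gamma$ inside $\widetilde\Sigma$, contradicting triviality directly. You instead exploit the bounded hole $H_0$ and aim for a rotation-number contradiction via Lemma~\ref{lem_fix_1}. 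Both are natural; yours has the pleasant feature that triviality is used only once (to identify the two lifts), while the paper uses it twice.

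There is, however, a real gap in your last step. You claim that ``taking $\widetilde D$ small'' makes $H_0$ disjoint from its nontrivial $\Z^2$-translates. But $H_0$ is a bounded connected component of $\R^2\setminus\widetilde\Sigma$ and does not depend on your choice of $\widetilde\gamma$: since $H_0$ is connected, disjoint from $\widetilde\gamma\subset\widetilde\Sigma$, and meets $\widetilde D$, you always have $H_0\subset\widetilde D$, so shrinking $\widetilde D$ towards $H_0$ does nothing to shrink $H_0$. If every bounded hole of $\widetilde\Sigma$ happens to have diameter $\geq 1$, you cannot arrange $\mathrm{diam}(\widetilde D)<1$, and there is no evident reason $p(H_0)$ is bounded in the paper's sense.

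The fix is immediate and actually simplifies your argument: do not project. You already have $\widehat F_k(H_0)=H_0$ with $H_0\subset\R^2$ bounded and $\widehat F_k$ a lift of $f^{n_k}$. For any $w\in H_0$ the full $\widehat F_k$-orbit stays in $H_0$, so $\rho(\widehat F_k,w)=0$, i.e.\ $n_k\rho(F)=(p_k,q_k)\in\Z^2$, contradicting the irrationality of $(\alpha,\beta)$. This is exactly the computation inside the proof of Lemma~\ref{lem_fix_1}, applied directly at the level of the cover, and it bypasses the boundedness-on-$\T^2$ issue entirely.
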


\begin{proof}
Because $\MM$ is invariant, we have that either: (a) $f^n(\Sigma) \cap \Sigma = \emptyset$ for all $n \neq 0$ or (b) $f^N(\Sigma) = \Sigma$ 
for some $N \neq 0$. In case (a), $\Sigma$ is a disk by Lemma~\ref{lem_wander}. In case (b), it follows from Lemma~\ref{lem_fix_1} that $\Sigma$
is necessarily unbounded. 

Thus we need to show that for unbounded $\Sigma$, we have that $\widetilde{\Sigma}$ is simply connected, if $f^N(\Sigma) = \Sigma$ for 
some $N \neq 0$. We may as well assume that $N=1$. Take a lift $F$ of $f$ such that $F(\widetilde{\Sigma}) = \widetilde{\Sigma}$. If $\widetilde{\Sigma}$ is not simply connected, then there exists a simple closed curve $\gamma \subset \widetilde{\Sigma}$ such that the disk $D_{\gamma} \subset \R^2$ with boundary curve $\gamma$ has the property that $D_{\gamma} \cap p^{-1}(\MM) \neq \emptyset$. Similarly to 
Lemma~\ref{lem_wander}, there exists a subsequence $n_k$ such that, for $k \geq 1$, we have that 
\begin{equation}\label{eq_top_disk_1}
F^{n_k}(D_{\gamma}) \cap T_{p_k,q_k}(D_{\gamma}) \neq \emptyset,
\end{equation}
for certain $(p_k,q_k) \in \Z^2$. As 
\[ \rho(f) = \rho(f,z) = (\alpha, \beta)\mod \Z^2 \neq (0,0)\mod \Z^2, \] 
for every $z \in \T^2$, it follows that $\widetilde{d}(F^{n_k}(\widetilde{z}), \widetilde{z}) \ra \infty$, for $k \ra \infty$. In particular, passing to a subsequence once again, we may assume that $F^{n_k}(\widetilde{z})$ is contained in a fundamental domain different from that of $\widetilde{z}$, for all $k \geq 1$. Condition~\eqref{eq_top_disk_1} gives again the two possiblities (a) and (b) of Lemma~\ref{lem_wander} and we can exclude case (a) as this would yield periodic points for $f$. Therefore, for all $k \geq 1$,~\eqref{eq_top_disk_1} reduces to the condition that 
\begin{equation}
F^{n_k} ( \gamma ) \cap T_{p_k,q_k}(\gamma) \neq \emptyset,
\end{equation}
for some $(p_k,q_k) \in \Z^2$. Thus for every $k \geq 1$, we have that
\begin{itemize}
\item[\tu{(i)}] $F^{n_k} ( \gamma ) \cap T_{p_k,q_k}(\gamma) \neq \emptyset$, 
\item[\tu{(ii)}] $F^{n_k} ( \gamma )$ lies in a fundamental domain different from that of $\gamma$, and 
\item[\tu{(iii)}] $F^{n_k} ( \gamma ) \subset \widetilde{\Sigma}$. 
\end{itemize}
Condition (iii) follows simply from the fact that $\widetilde{\Sigma}$ is $F$-invariant and $\gamma \subset \widetilde{\Sigma}$. 
Fix any $k \geq 1$ and choose $\widetilde{w} \in F^{n_k} ( \gamma ) \cap T_{p_k,q_k}(\gamma)$ and let 
$\widetilde{w}' = T^{-1}_{p_k,q_k}(\widetilde{w}) \in \gamma$. As $\widetilde{\Sigma}$ is a domain, it is path-connected and thus there exists an 
arc $\eta \subset \widetilde{\Sigma}$ connecting $\widetilde{w}$ and $\widetilde{w}'$. As these endpoints lie in different fundamental domains of $\T^2$, $\eta$ projects under $p$ to an essential closed curve, as its endpoints differ by an integer translate. 
However, this contradicts our assumption that $\Sigma$ is trivial (and thus does not contain any essential simple closed curves). This contradiction shows that $\Sigma$ must be simply connected and this completes the proof. 
\end{proof}

Using the irrationality of the rotation vector $(\alpha, \beta)$, we now deduce the following.

\begin{lem}\label{lemma_cylinder}
If $\Sigma$ is essential, then $\Sigma$ is an essential annulus and $f^n(\Sigma) \cap \Sigma = \emptyset$ for all $n \neq 0$.
\end{lem}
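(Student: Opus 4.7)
\emph{Overall plan.} The conclusion that $\Sigma$ is an essential annulus will drop out of Lemma~\ref{lem_wander} once the disjointness $f^n(\Sigma)\cap\Sigma=\emptyset$ for every $n\neq 0$ is established: Lemma~\ref{lem_wander} then forces $\Sigma$ to be a disk or an essential annulus, and essentiality rules out the disk. Since $\MM$ is $f$-invariant, the only alternative to disjointness is that $f^N(\Sigma)=\Sigma$ for some $N\geq 1$, which I aim to exclude by playing the essential-but-not-doubly-essential character of $\Sigma$ against the rational independence of $1,\alpha,\beta$.

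\emph{Setup and the key strip containment.} Assume for contradiction that $f^N(\Sigma)=\Sigma$. Let $(p,q)\in\Z^2$ with $\gcd(p,q)=1$ be the characteristic of $\Sigma$, pick an essential simple closed curve $\gamma\subset\Sigma$ of homotopy type $(p,q)$ and a $T_{p,q}$-invariant lift $\widetilde{\gamma}\subset\R^2$, and let $\widetilde{\Sigma}$ be the component of $p^{-1}(\Sigma)$ containing $\widetilde{\gamma}$; note $\widetilde{\Sigma}$ is $T_{p,q}$-invariant. Choose $(r,s)\in\Z^2$ with $ps-qr=1$; the translates $T_{kr,ks}(\widetilde{\gamma})$, $k\in\Z$, exhaust the lifts of $\gamma$ modulo $T_{p,q}$, and by $T_{p,q}$-periodicity each stays within bounded distance of a line parallel to $(p,q)$, partitioning $\R^2$ into strips bounded in the perpendicular direction. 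The claim is that $\widetilde{\Sigma}$ is contained in the strip $S$ bounded by $T_{-r,-s}(\widetilde{\gamma})$ and $T_{r,s}(\widetilde{\gamma})$: were it to extend beyond $S$, then, being open and connected, it would meet one of the bounding translates, and since that translate lies in $p^{-1}(\Sigma)$ and is connected, $\widetilde{\Sigma}$ would contain it entirely. But then the stabilizer of $\widetilde{\Sigma}$ in $\Z^2$ would contain both $T_{p,q}$ and $T_{r,s}$ and hence all of $\Z^2$, forcing $\pi_1(\Sigma)\hookrightarrow\pi_1(\T^2)$ to be surjective and $\Sigma$ doubly essential, contrary to hypothesis.

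\emph{Extracting the contradiction from the rotation vector.} Fix a lift $F$ of $f$ with $\rho(F)=(\alpha,\beta)$. Since $F^N(\widetilde{\Sigma})$ is a connected lift of $\Sigma$, we have $F^N(\widetilde{\Sigma})=T_{a,b}(\widetilde{\Sigma})$ for some $(a,b)\in\Z^2$, and $G:=T_{-a,-b}\circ F^N$ is a lift of $f^N$ satisfying $G(\widetilde{\Sigma})=\widetilde{\Sigma}$ with rotation vector $\rho(G)=N(\alpha,\beta)-(a,b)$. Since $\widetilde{\Sigma}\subset S$ is bounded in the direction perpendicular to $(p,q)$, the perpendicular component of $G^k(\widetilde{z})-\widetilde{z}$ stays bounded in $k$ for every $\widetilde{z}\in\widetilde{\Sigma}$, so $\rho(G)=\lambda(p,q)$ for some $\lambda\in\R$. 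Eliminating $\lambda$ from $N\alpha-a=\lambda p$ and $N\beta-b=\lambda q$ yields
\[ (pb-qa) + (Nq)\alpha + (-Np)\beta = 0, \]
an integer relation with coefficients not all zero (since $N\neq 0$ and $(p,q)\neq(0,0)$), contradicting the rational independence of $1,\alpha,\beta$ built into the definition of $\homeo_*(\T^2)$.

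\emph{Expected obstacle.} The step I expect to need the most care is the strip-containment dichotomy, which rests on identifying the stabilizer of a connected lift $\widetilde{\Sigma}$ with the image of $\pi_1(\Sigma)$ in $\pi_1(\T^2)$ and on the fact that openness and connectedness of $\widetilde{\Sigma}$ force it to absorb any entire translate of $\widetilde{\gamma}$ it meets. Once the containment in a strip bounded perpendicular to $(p,q)$ is in hand, the rotation-vector arithmetic of the last paragraph is routine.
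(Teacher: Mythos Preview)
Your proof is correct and follows essentially the same route as the paper: reduce to excluding $f^N(\Sigma)=\Sigma$, trap a lift $\widetilde{\Sigma}$ in a strip parallel to $(p,q)$ using that $\Sigma$ is essential but not doubly essential, then read off that the rotation vector of an appropriate lift is proportional to $(p,q)$ and derive an integer relation contradicting the rational independence of $1,\alpha,\beta$. Your use of a B\'ezout complement $(r,s)$ and the stabilizer language for the strip-containment step is a slightly cleaner packaging of the same argument the paper gives via path-connectedness and homotopy type of projected arcs.
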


\begin{proof}
It suffices to show that, if $\Sigma$ is essential, then $f^n(\Sigma) \cap \Sigma \neq \emptyset$ for all $n \neq 0$. It then follows 
from Lemma~\ref{lem_wander} that $\Sigma$ is an essential annulus. Assume that $\Sigma$ has characteristic $(p,q)$. We will show that, by our choice of translation number, $f^N$ can not fix an essential domain, for all $N \neq 0$. To derive a contradiction, suppose there exist an $N \neq 0$ such that $f^N(\Sigma) = \Sigma$. Without loss of generality, we may assume that $N=1$, i.e. that $f(\Sigma) = \Sigma$. Let $\gamma \subset \Sigma$ be an essential simple closed curve and let $\widetilde{\gamma}$ be a lift of $\gamma$. We may assume that $\widetilde{\gamma}$ intersects $(0,0) \in \R^2$, and by definition it also intersects $(p,q) \in \R^2$, where $p,q \in \Z$ and $\gcd(p,q)=1$. The arc $\eta \subset \widetilde{\gamma}$ connecting $(0,0)$ and $(p,q)$ is compact and therefore bounded. Therefore, the curve $\widetilde{\gamma}$ divides $\R^2$ into two unbounded connected components $\HHH_l$ and $\HHH_r$, homeomorphic to half-planes, so that $\R^2 \backslash \widetilde{\gamma} = \HHH_l \cup \HHH_r$ and $\HHH_l \cap \HHH_r = \emptyset$. Further, as $\gamma$ is a simple closed curve, any integer translate $\widetilde{\gamma}' = T_{p',q'}(\widetilde{\gamma})$, where $(p',q')$ is not an integer multiple of $(p,q)$, has the property that $\widetilde{\gamma}' \cap \widetilde{\gamma} = \emptyset$. This follows from the fact that $\Sigma$ is essential, but not doubly essential; if $\widetilde{\gamma}' \neq \widetilde{\gamma}$, then there exists an arc $\zeta \subset \widetilde{\Sigma}$ connecting $(0,0)$ to a point $(p',q') = T_{p',q'}(0,0)$, which is not a multiple of $(p,q)$, the projection of $\zeta$ under $p$ would lie in a homotopy class other than that of $\gamma$, implying that $\Sigma$ would be doubly essential, contrary to our assumption. Therefore, we can choose integer translates $\widetilde{\gamma}_l$ and $\widetilde{\gamma}_r$ of $\widetilde{\gamma}$ contained in $\HHH_l$ and $\HHH_r$ respectively and we can define $\Gamma \subset \R^2$ to be the infinite strip bounded by $\widetilde{\gamma}_l \cup \widetilde{\gamma}_r$. 

We claim that $\widetilde{\Sigma} \subset \Gamma$. Indeed, if $\widetilde{\Sigma} \cap \Gamma^c \neq \emptyset$, where $\Gamma^c := \R^2 \setminus \Gamma$, 
then $\widetilde{\Sigma} \cap \left( \widetilde{\gamma}_l \cup \widetilde{\gamma}_r \right) \neq \emptyset$. Suppose that $\widetilde{\Sigma} \cap \widetilde{\gamma}_l \neq \emptyset$. The case where $\widetilde{\Sigma} \cap \widetilde{\gamma}_r \neq \emptyset$ (or both) is similar. 
Let $\widetilde{z}' \in \widetilde{\Sigma} \cap \widetilde{\gamma}_l$. Because $\widetilde{\gamma} \subset \widetilde{\Sigma}$ and $\widetilde{\Sigma}$ is path-connected, there exists an arc $\zeta \subset \widetilde{\Sigma}$ connecting $\widetilde{z}'$ to a point 
$\widetilde{z} \in \widetilde{\gamma}$ such that $z = p(\widetilde{z}') = p(\widetilde{z})$. As the arc $\zeta$ connects two lattice points whose projection lies in a homotopy class different from $\gamma$, this would imply that $\Sigma$ is doubly essential, contrary to our assumption.
Thus $\widetilde{\Sigma} \subset \Gamma$.

To finish the proof, choose a lift $F$ of $f$ such that $F(\widetilde{\Sigma}) = \widetilde{\Sigma}$. As $\Gamma$ is invariant under the translation $T_{p,q}$, and $\widetilde{\Sigma} \subset \Gamma$, we thus must have that 
\begin{equation}
\rho(F,\widetilde{z}) = \lim_{n \ra \infty} \frac{F^n (\widetilde{z}) - \widetilde{z}}{n} = (a,b),~\textup{where}~ \frac{b}{a} = \frac{q}{p},
\end{equation}
for every $\widetilde{z} \in \widetilde{\Sigma}$. As $(a,b) = (\alpha + s, \beta + t)$ for certain $s,t \in \Z$ and $\alpha, \beta \notin \Q$, 
we have that $a = \alpha + s \neq 0$ and $b = \beta + t \neq 0$, and we obtain
\begin{equation}\label{eq_algebra}
\frac{\alpha + s}{\beta + t} = \frac{a}{b} = \frac{p}{q}.
\end{equation}
Rewriting~\eqref{eq_algebra} gives that 
\[ q \alpha - p \beta - (qs - pt) = 0. \] 
As $p,q, qs - pt \in \Z$, with $(p,q) \neq (0,0)$, this gives a non-trivial solution of~\eqref{eq_numbers_condition}, which contradicts the 
irrationality of $(\alpha, \beta)$. 
\end{proof}

We recall that a Cantor set can be characterized topologically as being compact, perfect and totally-disconnected.

\begin{lem}\label{lem_dessential}
If $\Sigma$ is doubly essential, then $\MM$ is an extension of a Cantor set.
\end{lem}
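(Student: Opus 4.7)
The plan is to assemble an upper semi-continuous decomposition $\UU$ of $\T^2$ whose non-singleton elements are precisely the filled components $\filled(\Lambda_i)$ of $\MM$, and then apply Moore's Theorem together with Lemma~\ref{lem_moore_semic_conj} to produce the semi-conjugacy required by Definition~\ref{defn_cantor}.

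First, the double essentiality of $\Sigma$ yields the right geometry in the universal cover. Since $\pi_1(\Sigma) \hookrightarrow \pi_1(\T^2) = \Z^2$ is surjective, any lift $\widetilde{\Sigma}$ is invariant under the full translation lattice and $p^{-1}(\Sigma) = \widetilde{\Sigma}$ is a single connected open set. Taking two essential simple closed curves inside $\Sigma$ whose homotopy classes generate $\Z^2$, their lifts form a $\Z^2$-invariant grid inside $\widetilde{\Sigma}$ that partitions $\R^2$ into bounded parallelograms. Every component of $\R^2 \setminus \widetilde{\Sigma}$ is therefore bounded; in particular, every lift of every component $\Lambda_i$ of $\MM$ is a bounded continuum, so $\filled(\Lambda_i)$ is a well-defined acyclic set. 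Moreover, only finitely many of these components per $\Z^2$-fundamental region can have diameter above any prescribed $\epsilon>0$, so the family $\{\filled(\Lambda_i)\}_{i \in I}$ is a null family.

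The technical core is the disjointness $\filled(\Lambda_i) \cap \filled(\Lambda_j) = \emptyset$ for $i \neq j$. The obstruction would be some $\Lambda_j$ contained in a bounded complementary domain of a lift $\widetilde{\Lambda}_i$; connectedness and unboundedness of $\widetilde{\Sigma}$, together with $\widetilde{\Sigma} \cap p^{-1}(\MM) = \emptyset$, prevent $\widetilde{\Sigma}$ from entering any such ``hole'', and combining this with minimality of $\MM$ and the absence of periodic points (Lemma~\ref{lem_fix_1}) rules out persistent nested configurations under the dynamics. Setting $\widehat{\QQ} = \bigcup_{i \in I} \filled(\Lambda_i)$, let $\UU$ consist of the $\filled(\Lambda_i)$ together with the singletons $\{z\}$ for $z \notin \widehat{\QQ}$. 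Axioms (1)--(3) of Definition~\ref{defn_upper_semi} are immediate, and upper semi-continuity (4) follows from the null property: any Hausdorff-convergent sequence of elements of $\UU$ has diameters either tending to zero (limit a singleton, contained in a unique element of $\UU$) or eventually bounded away from zero (and then, by the null property, drawn from a finite subfamily of fills, so the limit equals one of them).

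Since $f$ preserves $\MM$ and filling is a topological operation, $f$ permutes the elements of $\UU$. Moore's Theorem supplies a continuous $\phi \colon \T^2 \to \T^2$ homotopic to the identity with $\phi^{-1}(z) \in \UU$ for every $z$, and Lemma~\ref{lem_moore_semic_conj} yields $\widehat{f} \in \homeo_0(\T^2)$ with $\phi\circ f = \widehat{f}\circ\phi$. Since $\phi$ is homotopic to the identity, any lift $\widetilde{\phi}$ satisfies $\|\widetilde{\phi}-\Id\|_\infty < \infty$, from which $\rho(\widehat{f}) = \rho(f) = (\alpha,\beta) \mod \Z^2$, so $\widehat{f} \in \homeo_*(\T^2)$. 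The set $\widehat{\MM} := \phi(\widehat{\QQ}) = \phi(\MM)$ is compact, totally disconnected (each $\filled(\Lambda_i)$ collapses to a point and the fills form a null family), minimal under $\widehat{f}$ (as the continuous image of the minimal set $\MM$ via a semi-conjugacy), and perfect since $\widehat{f}$ has no periodic points; hence $\widehat{\MM}$ is a Cantor minimal set and $\MM$ is an extension of a Cantor set. The main obstacle I anticipate is establishing disjointness of the fills; this is precisely where double essentiality of $\Sigma$ is indispensable, as the $\Z^2$-invariant grid inside $\widetilde{\Sigma}$ cannot penetrate the holes of any $\widetilde{\Lambda}_i$, ruling out the problematic nested configurations.
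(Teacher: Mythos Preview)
Your overall plan coincides with the paper's: form the decomposition $\UU$ by the filled components $\filled(\Lambda_i)$ together with singletons, verify upper semi-continuity, and apply Moore's Theorem plus Lemma~\ref{lem_moore_semic_conj}. The two points where your argument diverges from the paper are precisely the two that need more care.

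First, the null-family claim is not justified, and in fact is false for arbitrary disjoint families of acyclic continua: in a unit square one can place infinitely many pairwise disjoint horizontal segments of length $1/2$, all acyclic, all of diameter $1/2$. Nothing you have said about the $\filled(\Lambda_i)$ rules this out, so your verification of axiom~(4) of Definition~\ref{defn_upper_semi} (and your argument for total disconnectedness of $\widehat{\MM}$) rests on an unproved and generally invalid assertion. The paper does not use any null property. Instead it argues that since the $\UU_{j_k}$ are mutually disjoint in a compact space, the largest inscribed disk in $\UU_{j_k}$ must shrink to zero; hence every point of the Hausdorff limit $\CC$ is a limit of boundary points $\partial \UU_{j_k} \subset \MM$, so $\CC \subset \MM$, and being connected, $\CC$ lies in a single $\Lambda_i \subset \filled(\Lambda_i)$.

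Second, your disjointness argument for the fills is too impressionistic. The invocation of ``minimality of $\MM$ and the absence of periodic points'' to ``rule out persistent nested configurations under the dynamics'' is not an argument. The paper's route is short and purely topological: from $f(\Sigma)=\Sigma$ and minimality one gets $\partial\Sigma=\MM$; since $\Sigma$ contains essential curves it must lie in the unique unbounded complementary component of each $\Lambda_i$, so any bounded hole $D$ of $\Lambda_i$ satisfies $D\cap\Sigma=\emptyset$; but then $D\cap\MM=\emptyset$ as well (else $\partial\Sigma=\MM$ would force $\Sigma$ to enter $D$), and hence $D$ contains no other $\Lambda_j$. This immediately gives $\filled(\Lambda_i)\cap\filled(\Lambda_j)=\emptyset$ for $i\neq j$ without any dynamical input beyond $f(\Sigma)=\Sigma$.
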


\begin{proof}
Denote $\{\Lambda_i\}_{i \in I}$ the collection of connected components of $\MM$. Because $\Sigma$ is doubly essential, 
$f(\Sigma) \cap \Sigma \neq \emptyset$, hence $f(\Sigma) = \Sigma$. As $f(\Sigma) = \Sigma$, we have that $f(\partial \Sigma) = \partial \Sigma$. Further, as $\partial \Sigma \subset \MM$ and $\partial \Sigma$ is closed, $\partial \Sigma = \MM$ by minimality of $\MM$. Let $\Lambda:=\Lambda_i$ be any connected component of $\MM$, for some $i \in I$. As $\Lambda$ is closed in $\MM$ and $\MM$ is closed in $\T^2$, $\Lambda$ is closed, and thus compact, in $\T^2$. Therefore, $\Lambda$ is a continuum. Further, as $\MM$ is nowhere dense (as $\MM \neq \T^2$), it follows that $\Lambda$ is nowhere dense. 

We need to show that a lift $\widetilde{\Lambda}$ of $\Lambda$ is bounded. As $\Sigma$ is doubly essential, there exist two non-homotopic essential simple closed curves $\gamma, \gamma' \subset \Sigma$. As these curves are non-homotopic, the respective lifts 
$\widetilde{\gamma}, \widetilde{\gamma}' \subset \widetilde{\Sigma}$ of $\gamma$ and $\gamma'$ and the integer translates of these curves tile $\R^2$ into bounded disks. As $\widetilde{\Lambda} \cap \widetilde{\Sigma} = \emptyset$, it follows that $\widetilde{\Lambda}$ has to be contained in one of these bounded disks, implying $\widetilde{\Lambda}$ itself is bounded. 

As $\Lambda$ is a connected component of $\MM$, we must either have that $f^n(\Lambda) \cap \Lambda = \emptyset$ for all $n \neq 0$, 
or $f^N(\Lambda) = \Lambda$ for some finite $N \neq 0$. However, the latter is excluded by Lemma~\ref{lem_fix_1} as $\Lambda$ is bounded 
and thus $f^n(\Lambda) \cap \Lambda = \emptyset$ for all $n \neq 0$. As $\Lambda$ is a bounded continuum, the connected components of 
$\T^2 \setminus \Lambda$ consists of a unique unbounded component and every other component is a bounded disk. Let $D$ be one such disk. 
Then $\Sigma$ is contained in this unbounded component; indeed, if this would not be the case, then we can take a point $z \in D \cap \Sigma$ 
and an essential simple closed curve $\gamma \subset \Sigma$ passing through $z$. As $\MM \cap \Sigma = \emptyset$, and $z \in D$, 
this implies that $\gamma \subset D$, contradicting that $D$ is a disk. We thus conclude that $D \cap \Sigma = \emptyset$.
Further, if $D \cap \MM \neq \emptyset$, then this implies that $\Sigma \cap D \neq \emptyset$ as $\partial \Sigma = \MM$, 
which contradicts our earlier conclusion. In other words, to a component $\Lambda$, we can uniquely adjoin the open disks which, apart 
from the unique doubly essential component containing $\Sigma$, form the connected components of $\T^2 \setminus \Lambda$. This proves that $\filled(\Lambda_i) \cap \filled(\Lambda_j) = \emptyset$ if $i \neq j$, with $\filled(\Lambda_i)$ a bounded non-separating continuum, for 
every $i \in I$.

Let again $\Lambda_i$ be any component of $\MM$ and define $\widehat{\QQ} = \bigcup_{i \in I} \filled(\Lambda_i)$. Define the decomposition $\UU$ 
of $\T^2$ into the continua $\{ \filled(\Lambda_i) \}_{i \in I}$ and singletons in the complement of these continua. In order to show that 
$\MM$ is an extension of a Cantor set, we first show that the decomposition $\UU$ is upper semi-continuous. By Moore's Theorem, this implies there exists a continuous $\phi \colon \T^2 \ra \T^2$ such that $\phi^{-1}(z) = \UU_j$ for every $z \in \T^2$. 
We have already shown that the decomposition $\UU$ satisfies conditions (1), (2) and (3) of definition~\ref{defn_upper_semi}. To prove it satisfies condition (4), we need to show that if a sequence of continua $\UU_{j_k}$, with $k \in \Z$, has Hausdorff limit $\CC$, then $\CC \subset \UU_{j}$ for some $j \in J$. As the statement is obvious if $\CC$ is a singleton, assume $\CC$ to be a non-trivial continuum. Note that every non-degenerate element $\UU_{j_k} \in \UU$ has the property that $\partial \UU_{j_k} \subset \partial \Sigma = \MM$. Further, without loss of generality, we may assume that no element $\partial \UU_{j_k}$ is a singleton and that the elements are mutually disjoint. We first claim that the interior of $\CC$ has to be empty. Indeed, if not, there would exist a subsequence of elements for which the largest open disk contained in the interior of $\UU_{j_k}$ would be bounded from below, contradicting that the torus is compact and the elements mutually disjoint. Therefore, as $\partial \UU_{j_k} \subset \MM$, every point of the Hausdorff limit $\CC$ is the limit point of a sequence of points of $\MM$. As $\MM$ is closed, this implies $\CC$ is itself contained in $\MM$. In particular, as $\CC$ is connected, $\CC$ is contained in a connected component of $\MM$, i.e. $\CC \subset \Lambda_i \subset \UU_j$ for some $j \in J$. So $\UU$ is upper semi-continuous indeed. We have 
already shown that all non-trivial elements of $\UU$ are non-separating and bounded, and thus acyclic. 

Thus, by Moore's Theorem, there exists a continuous $\phi \colon \T^2 \ra \T^2$, homotopic to the identity, such that for every $z \in \T^2$, $\phi^{-1}(z)$ is a unique element of $\UU$. By Lemma~\ref{lem_moore_semic_conj}, as $\UU$ is upper semi-continuous and $f$ sends elements 
of $\UU$ into elements of $\UU$, the mapping $\widehat{f}$ defined by $\phi \circ f = \widehat{f} \circ \phi$ is an element of $\homeo_0(\T^2)$. By a standard argument, $\rho(f) = \rho(\widehat{f}) \mod \Z^2$, thus $\widehat{f} \in \homeo_*(\T^2)$. 
This proves condition (i) of definition~\ref{defn_cantor}.

To prove condition (ii) of definition~\ref{defn_cantor}, we need to show that $\widehat{\MM} := \phi(\widehat{\QQ}) \subset \T^2$ is a
 Cantor minimal set for $\widehat{f}$. As $\MM$ is a minimal set for $f$, $\widehat{\MM}$ is a minimal set for $\widehat{f}$. Further, as $\widehat{\MM}$ is totally disconnected by construction, it suffices to show that $\widehat{\MM}$ is compact and perfect. First, $\widehat{\QQ}$ is compact as the complement $\Sigma$ is open. Because $\phi$ is continuous, $\widehat{\MM}$ is compact. To show $\widehat{\MM}$ is perfect, we observe that, because $\filled(\Lambda_i) \cap \filled(\Lambda_j) = \emptyset$ if $i \neq j$, no element $\filled(\Lambda_i)$ is isolated, as this would imply that a component $\Lambda_i$ is isolated. Therefore, by continuity of $\phi$, no point of $\widehat{\MM}$ is isolated, and thus $\widehat{\MM}$ is perfect. 
\end{proof}

\subsection{Proof of Theorem A}

\begin{proof}[Proof of Theorem A]
To show that the minimal set $\MM$ of $f$ is either of type I, II or III as given above, assume that $\MM \neq \T^2$ and let $\{\Sigma_k\}$ be the collection of connected components of the complement of $\MM$. If no element of $\{\Sigma_k \}$ is doubly essential, then $\Sigma_k$ is either trivial or essential, for all $k \in \Z$. By Lemma~\ref{lemma_disk} and~\ref{lemma_cylinder}, $\{ \Sigma_k \}$ are all disks and/or essential annuli. In case no element $\Sigma_k$ is essential, we have a type I minimal set. In case at least one, and therefore infinitely many, connected components are 
essential, we have a type II minimal set. If for some $k$, $\Sigma_k$ is doubly essential, then $\MM$ is an extension of a Cantor set by Lemma~\ref{lem_dessential} and these correspond to a type III minimal sets. This concludes the proof.
\end{proof}

We finish this section with the proofs of the corollaries stated above.

\begin{proof}[Proof of Corollary~\ref{cor_limit_sets}]
Let $\MM$ be a minimal set of $f$ of type I. It suffices to show that $\MM = \Omega(f)$. Indeed, if this is shown, 
then by minimality of $\MM$ and the inclusions $\alpha(z), \omega(z) \subseteq \Omega(f)$, with $\omega(z), \alpha(z)$ closed and $f$-invariant sets for every $z \in \T^2$, we obtain~\eqref{eq_thm_orbits}. Uniqueness then also follows, as any other minimal set $\MM'$ of $f$ has to be contained in the complement of $\Omega(f)$, which is clearly impossible; if $z \in \MM'$ then $z$ is both recurrent and wandering, which are incompatible conditions to hold simultaneously. 

First, suppose that $\MM$ is of type I. Fix a component $\Sigma := \Sigma_k$. Then $\Sigma$ is a disk. If $\Sigma$ is bounded, then by 
Lemma~\ref{lemma_disk} we have that $f^n(\Sigma) \cap \Sigma = \emptyset$ for all $n \neq 0$, and thus $\Sigma \cap \Omega(f) = \emptyset$.
So it remains to prove the case where $\Sigma$ is an unbounded disk. We may assume that there exists an $N \neq 0$ such that 
$f^N(\Sigma) = \Sigma$, as otherwise we are done by the previous argument. A straightforward modification of the argument of Lemma~\ref{lemma_disk} 
shows that $\Omega(f) \cap \Sigma = \emptyset$ in this case as well. Finally, if $\MM$ is of type II, then all elements of $\{ \Sigma_k \}$ are essential annuli or disks. It suffices to show that if $\Sigma_k$ is an essential annulus, then $\Omega(f) \cap \Sigma_k = \emptyset$. 
This follows from Lemma~\ref{lemma_cylinder} stating that $f^n(\Sigma_k) \cap \Sigma_k = \emptyset$ for all $n \neq 0$, and this finishes the proof.
\end{proof}

\begin{proof}[Proof of Corollary~\ref{cor_connected}]
First, it is readily verified that no minimal set of type II or III is connected. Conversely, let $\MM$ be of type I, so that $\Sigma_k$ is an open topological disk for every $k \in \Z$. In each disk $\Sigma_k$, we can find
a sequence $D^t_k$ of nested disks, i.e. $D_k^t \subset D^{t+1}_k$, embedded in $\Sigma_k$, such that $\Cl(D_k^t)$ is a closed disk and such that $\bigcup_{t \geq 1} D^t_k = \Sigma_k$. We can accomplish this by uniformizing each disk $\Sigma_k$ to the unit disk $\D^2$, taking nested such disks centered at the origin in $\D^2$, and pulling these back to $\Sigma_k$. Define $\Gamma_t = \T^2 \setminus \bigcup_{k \in \Z} D_k^t$. 
We claim that $\Gamma_t$ is connected. Indeed, define the compact sets $\Gamma_t^s = \T^2 \setminus \bigcup_{k = -s}^s D_k^t$. 
Clearly, $\Gamma_t^s$, as the torus with finitely many disjoint disks whose closures are disjoint deleted, is connected. 
As $\Gamma_t^{s+1} \subset \Gamma_t^s$, we have that $\Gamma_t = \bigcap_{s \geq 1} \Gamma_t^s$ is connected as well. 
By the same token, as $\Gamma_{t+1} \subset \Gamma_t$ with $\Gamma_t$ compact and connected for every $t \geq 1$, 
we have that $\MM = \bigcap_{t \geq 1} \Gamma_t$ is connected.
\end{proof}

\begin{proof}[Sketch of the proof of Corollary~\ref{cor_loc_conn}]
First, it is not difficult to show that a locally connected minimal set $\MM$ has to be of type I and that the collection $\{\Sigma_k\}$ can not contain an unbounded disk. By the same arguments as in~\cite[Lemma 7]{bis_1}, one can now show that $\diam(\Sigma_k)$ is a null-sequence. Further, as no minimal set of a homeomorphism of a compact metric space can have cut points by~\cite[Lemma 2]{bis_1}, it follows from e.g.~\cite[Thm 61-4]{kura} that $\Sigma_k$ is the interior of a closed embedded disk.
To show that $\Cl(\Sigma_k) \cap \Cl(\Sigma_{k'})$ consists of at most one point if $k \neq k'$, we reason as follows. Denote $\Sigma = \Sigma_k$ and $\Sigma' = \Sigma_{k'}$ and let $\gamma = \partial \Sigma$ and $\gamma' = \partial \Sigma'$, both simple closed (trivial) curves. Assume, to the contrary, that $\gamma \cap \gamma'$ contains at least two points $z_1, z_2$. There exists an arc $\eta \subset \Cl(\Sigma)$ starting at $z_1$ and ending at $z_2$ such that $\eta \cap \partial \Sigma = \{z_1,z_2\}$. Similarly, there exists an arc $\eta' \subset \Cl(\Sigma')$ starting at $z_1$ and ending at $z_2$ such that $\eta' \cap \partial \Sigma' = \{z_1,z_2\}$. Then $\eta \cup \eta'$ forms a simple closed curve that bounds a disk $D$. As the diameters of $\Sigma$ and $\Sigma'$ tend to zero, $\diam(f^n (D)) \ra 0$ for $|n| \ra \infty$. Furthermore, as $z_1 \neq z_2$, $D$ contains arcs contained in $\partial \Sigma$ and $\partial \Sigma'$ joining $z_1$ and $z_2$ in its interior, we have that $D \cap \MM \neq \emptyset$. As $\diam(f^n(D))$ is a null sequence, for sufficiently large $N$, we have that $f^N(D) \subset D$, which by Lemma~\ref{lem_fix_2} implies $f$ has periodic points. Therefore, $\Cl(\Sigma) \cap \Cl(\Sigma')$ can consist of at most a single point and thus $\MM$ is indeed a quasi-Sierpi\'nski set. 
\end{proof}

\section{Existence of minimal sets}\label{sec_existence}

Having given a classification of the possible minimal sets of homeomorphisms $f \in \homeo_*(\T^2)$, this section is aimed at constructing such homeomorphisms admitting a minimal set of every type Theorem A allows. 
More precisely, we show there exist $f \in \homeo_*(\T^2)$ such that its minimal set $\MM$

\begin{enumerate}
\item is a quasi-Sierpi\'nski set, but not a Sierpi\'nski set,
\item is such that the complement consists of a single unbounded disk,
\item is such that the complement consists of essential annuli and disks,
\item is a non-trivial extension of a Cantor set.
\end{enumerate}

It is well-known there exist homeomorphisms $f \in \homeo_*(\T^2)$ for which the minimal set is a Sierpi\'nski set. Example~\ref{ex_quasi_sierp}, 
constructed in~\cite{bis_1}, is derived from the Sierpi\'nski set, see also section~\ref{ex_type_I} below for a discussion of this example. 
The remainder of this section is devoted to the construction of Example~\ref{ex_unbdd_disk} (type I), Example~\ref{ex_cylinders} (type II) and 
Example~\ref{ex_type_III} (type III). Combined these examples prove Theorem B.

\subsection{Homeomorphisms semi-conjugate to an irrational translation}

There is a natural subclass of $\homeo_*(\T^2)$, namely those homeomorphisms that are semi-conjugate to an irrational translation of the torus. 
Indeed, a standard argument shows that an element $f \in \homeo_0(\T^2)$ semi-conjugate to a translation $\tau$, through a continuous map 
homotopic to the identity, has the property that $\rho(f) = \rho(\tau)\mod \Z^2$. Given a continuous map $\pi \colon \T^2 \ra \T^2$, we call 
the set of points 
\begin{equation}
\RR_{\pi} = \left \{ z \in \T^2 ~|~ \# \left( \pi^{-1}( \pi(z)) \right) = 1 \right \} \subset \T^2,
\end{equation}
the {\em regular set} of $\pi$. \index{regular set}

\begin{defn}\label{defn_semi_conj}
We define the class $\homeo_{\#}(\T^2) \subset \homeo_*(\T^2)$ the class of homeomorphisms which satisfy the following:
\begin{enumerate}
\item[\textup{(i)}] $f$ is isotopic to the identity, i.e. $f \in \homeo_0(\T^2)$, 
\item[\textup{(ii)}] there exists a monotone\footnote{A map is said to be monotone if every point-inverse is connected.} and 
continuous $\pi:\T^2 \ra \T^2$, homotopic to the identity, and an irrational translation $\tau$ such that $\pi \circ f = \tau \circ \pi$ (cf.~\eqref{eq_tau}), and 
\item[\textup{(iii)}] the regular set $\RR_{\pi}$ contains uncountably many elements.
\end{enumerate}
\end{defn}

The following simple but important observation plays a crucial role in the constructions below. In what follows, we denote $\OO_f(z)$ the full orbit of a point $z$ under $f$. 

\begin{lem}\label{lem_semi_conjugation}
Let $f \in \homeo_{\#}(\T^2)$, with $\pi$ the corresponding semi-conjugacy. 
Then $f$ has a unique minimal set $\MM$ and 
\begin{equation}
\MM = \Cl(\RR_{\pi}) = \Cl(\OO_f(z)),
\end{equation}
for any $z \in \RR_{\pi}$. 
\end{lem}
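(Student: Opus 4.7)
My plan is to exploit the semi-conjugacy $\pi \circ f = \tau \circ \pi$ in order to pull back the minimality of the irrational translation $\tau$ and then use the injectivity of $\pi$ on the regular set $\RR_{\pi}$. First I would observe that $\RR_{\pi}$ is $f$-invariant. The commutation relation gives $\pi \circ f^{\pm 1} = \tau^{\pm 1} \circ \pi$, from which one easily extracts the fibre-wise identity $f(\pi^{-1}(y)) = \pi^{-1}(\tau(y))$. Consequently, if $\pi^{-1}(\pi(z)) = \{z\}$, then
\[
\pi^{-1}(\pi(f(z))) = \pi^{-1}(\tau(\pi(z))) = f(\pi^{-1}(\pi(z))) = \{f(z)\},
\]
so $f(z) \in \RR_{\pi}$, and symmetrically for $f^{-1}$.

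The core step is then the following observation: every non-empty closed $f$-invariant set $K \subset \T^2$ contains $\RR_{\pi}$. Indeed, $\pi(K)$ is closed in $\T^2$ by compactness and continuity, and it is $\tau$-invariant because $\pi(f(K)) = \tau(\pi(K))$; since $(\alpha, \beta)$ is irrational the translation $\tau$ is minimal, forcing $\pi(K) = \T^2$. Given any $w \in \RR_{\pi}$, surjectivity of $\pi|_K$ supplies some $w' \in K$ with $\pi(w') = \pi(w)$, but regularity gives $\pi^{-1}(\pi(w)) = \{w\}$, so $w = w' \in K$.

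The conclusions of the lemma now fall out. For $z \in \RR_{\pi}$, the orbit $\OO_f(z)$ is contained in $\RR_{\pi}$ by the first paragraph, so $\Cl(\OO_f(z)) \subseteq \Cl(\RR_{\pi})$; conversely, the core step applied to $K = \Cl(\OO_f(z))$ (which is non-empty because it contains $z$, and is closed and $f$-invariant) delivers $\Cl(\RR_{\pi}) \subseteq \Cl(\OO_f(z))$. Thus $\Cl(\OO_f(z)) = \Cl(\RR_{\pi}) =: \MM$ independently of the choice of $z \in \RR_{\pi}$. Finally, for uniqueness, let $\MM'$ be any minimal set of $f$; the core step gives $\Cl(\RR_{\pi}) \subseteq \MM'$, so $\MM \subseteq \MM'$, and minimality of $\MM'$ then forces $\MM = \MM'$. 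This simultaneously shows that $\MM$ is itself minimal and that it is the only minimal set of $f$.

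The argument is essentially formal once the two ingredients are in place, so I do not anticipate a genuine obstacle; the one point to verify with care is the fibre-wise identity $f(\pi^{-1}(y)) = \pi^{-1}(\tau(y))$ (which I would check by running the commutation relation in both directions) together with the $\tau$-invariance of $\pi(K)$. Note that the uncountability of $\RR_{\pi}$ is not actually needed here—mere non-emptiness suffices—but it will be decisive later, when $\MM = \Cl(\RR_{\pi})$ is required to be topologically rich enough to support the constructions of Theorem~B.
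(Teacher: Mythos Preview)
Your argument is correct and follows essentially the same route as the paper: both hinge on the observation that for any non-empty closed $f$-invariant $K$ one has $\pi(K)=\T^2$ by minimality of $\tau$, and then regularity forces $\RR_{\pi}\subseteq K$. Your version packages this as a single reusable ``core step'' and adds an explicit check that $\RR_{\pi}$ is $f$-invariant, whereas the paper applies the idea twice in a slightly more ad hoc fashion (once to show two minimal sets share a point, once to show $\OO_f(z)$ meets $\MM$); the substance is the same.
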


\begin{proof}
Let us first prove $\MM$ is the unique minimal set. Let $\MM$ and $\MM'$ be two minimal sets for $f$. Because $\MM$ is closed and $f$-invariant, $\pi(\MM)$ is closed and $\tau$-invariant. In particular, $\pi(\MM)$ contains the complete $\tau$-orbit of 
every point $\pi(z) \in \pi(\MM)$, where $z \in \MM$. Since every orbit of $\tau$ is dense, we have that $\pi(\MM) = \T^2$. Similarly, $ \pi(\MM') = \T^2$. Take $z \in \RR_{\pi} \neq \emptyset$. 
Then $\{z\} = \pi^{-1}(\pi(z))$ is contained in both $\MM$ and $\MM'$. As two minimal sets are either identical or disjoint, this implies that $\MM = \MM'$. Thus $\MM$ is unique.

Next we prove that $\MM = \Cl( \OO_f(z) )$, for every $z \in \RR_{\pi}$. Take any $z \in \RR_{\pi}$ and consider $\OO_f(z)$. Then $\Cl(\OO_f(z))$ is closed and invariant, hence it contains the (unique) minimal set $\MM$ of $f$, i.e. $\MM \subseteq \Cl( \OO_f(z) )$. 
We need to show that $\Cl( \OO_f(z) ) \subseteq \MM$. If $\MM \cap \OO_f(z) = \emptyset$, then as $\pi(\MM) = \T^2$ by the above, there exists a point $z' \in \MM$ for which $\pi(z') = \pi(z)$. However, this contradicts the assumption that $z \in \RR_{\pi}$
and thus $\pi^{-1}(\pi(z)) = \{z\}$. It follows that $\MM \cap \OO_f(z) \neq \emptyset$. Let $z' \in \MM \cap \OO_f(z)$, then $\OO_f(z') = \OO_f(z) \subseteq \MM$, since $\MM$ is invariant. But since $\MM$ is also closed, $\Cl(\OO_f(z)) \subseteq \MM$. Hence $\MM = \Cl(\OO_f(z))$, 
for any $z \in \RR_{\pi}$ and, consequently, $\MM = \Cl(\RR_{\pi})$.
\end{proof}

Let us further introduce the following notation, to be used in the proofs of Examples~\ref{ex_unbdd_disk},~\ref{ex_cylinders} and~\ref{ex_type_III} below. A non-transitive orientation preserving circle homeomorphism with irrational rotation number will be referred to as a {\em Denjoy counterexample}\index{Denjoy counterexample}. Moreover, given a Cantor set in the circle $\QQ = \T^1 \setminus \bigcup_{k \in \Z} I_k$, we denote $\QQ_{\tu{rat}} \subset \QQ$ and $\QQ_{\tu{irr}} = \QQ \setminus \QQ_{\tu{rat}}$ the {\em rational}\index{Cantor set!rational part} and {\em irrational}\index{Cantor set!irrational part} part of $\QQ$, comprised of all the endpoints of the deleted intervals and the complement in $\QQ$ of these endpoints respectively. It is readily verified, using Poincar\'e's Theorem, that 
\begin{enumerate}
\item[\textup{(1)}] a product of a Denjoy counterexample and an irrational rotation, and 
\item[\textup{(2)}] a product of two Denjoy counterexamples,
\end{enumerate}
provided the factors are chosen such that the corresponding rotation numbers are rationally independent, are elements of $\homeo_{\#}(\T^2)$.

\subsection{A topological blow-up procedure}\label{subsec_constr}

In order to construct the examples, we need a tool with which to construct homeomorphisms exhibiting the desired behaviour.
Below we devise such a tool that enables us to {\em blow up} an orbit of a point under a homeomorphism to a collection of disks. 
A. Bi\'s, H. Nakayama and P. Walczak in~\cite{bis_2} define such a blow-up procedure that works for (groups of) diffeomorphisms. J. Aarts and L. Oversteegen  in~\cite{aarts} defined a similar blowup construction for a homeomorphism of the annulus that preserves radial lines. 
In both constructions, this allows for the mapping to be extended to the disks glued to the surface by the infinitesimal behaviour of the mapping. As this would not work for a general homeomorphism, we circumvent this by inductively blowing up punctures to disks, by pulling back points near a puncture along leaves of a dynamically defined foliation emanating from the puncture. 
We use the continuity of the foliation to define an extension of the mapping to the disks. 

\medskip

Let $f \in \homeo_{\#}(\T^2)$, with $\pi \colon \T^2 \ra \T^2$ the semi-conjugacy between $f$ and an irrational translation $\tau$. 
Take a point $z_0 \in \RR_{\pi}$ and consider its orbit $\OO_{f}(z_0)$; we do not require $f$ to be minimal, so $\OO_{f}(z_0)$ may or may not be 
dense. Define $\Gamma = \T^2 \setminus \OO_{f}(z_0)$. Clearly, $f(\Gamma) = \Gamma$ and $f|_{\Gamma}$ is a homeomorphism. Let 
$B_{\delta} := B(z_0, \delta) \subset \T^2$ be the embedded closed Euclidean disk of radius $0 < \delta \leq 1/4$ centered at $z_0$. Choose 
$0 < \delta_0 \leq 1/4$ and let $\FF_0$ be the foliation of $B_{\delta_0}$ by straight rays emanating from $z_0$, see Figure~\ref{fig_blow_up}. The leaves $\rho_{\theta} \in \FF_0$ are parametrized by $\theta \in [0, 2 \pi)$. Fix $0 < \epsilon_0 < 1$. To blow up the punctures to disks, we define the following auxiliary planar map, which reads in polar coordinates,
\begin{equation} 
g_{\epsilon} \colon \widetilde{B}_1 \setminus \{0\} \ra \widetilde{B}_1 \setminus \widetilde{B}_{\epsilon}, \quad 
g_{\epsilon} (r, \theta) = \left( \frac{r + \epsilon}{1+ r \epsilon}, \theta \right)
\end{equation}
where $\widetilde{B}_{\rho} \subset \R^2$ is the closed Euclidean disk centered at $0 \in \R^2$ of radius $0 < \rho \leq 1$. Conjugating $g_{\epsilon_0}$ with a linear injection (into the torus) 
$\lambda_{0} \colon \widetilde{B}_{1} \hookrightarrow B_{\delta_0}$ yields a homeomorphism 
\begin{equation}\label{eq_constr_h_0}
h_0 = \lambda_{0} \circ g_{\epsilon_0} \circ \lambda_{0}^{-1} \colon A_0 \longrightarrow h_0(A_0),
\end{equation}
where $A_0 = B_{\delta_0} \setminus \{z_0\}$ and $h_0(A_0) = B_{\delta_0} \setminus B_{\epsilon_0 \delta_0}$ the corresponding annuli.
We can extend $h_0$ to $\T^2 \setminus \{z_0\}$ by declaring it to be the identity off $A_0$, the homeomorphism we denote again by $h_0$, 
and it naturally acts on $\Gamma \subset \T^2 \setminus \{z_0\}$ by restriction. Note that $h_0$ acts on $B_{\delta_0} \setminus \{z_0\}$ 
along the leaves of the foliation $\FF_0$. 

\begin{figure}[h]
\begin{center}
\psfrag{D_1}{$A_0$}
\psfrag{D_2}{$h_0(A_0)$}
\psfrag{D_e}{$\Sigma_0$}
\psfrag{rho_0}{$\rho_0$}
\psfrag{rho_theta}{$\rho_{\theta}$}
\psfrag{z_0}{$z_0$}
\psfrag{theta}{$\theta$}
\psfrag{h_0}{$h_0$}
\includegraphics[scale=0.8]{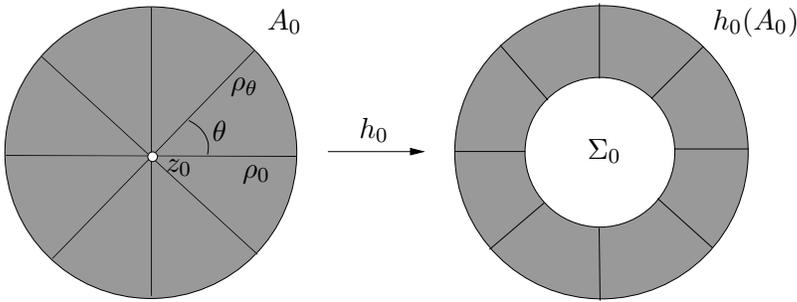}
\caption[Blow-up of the puncture $z_0$]{Radial blow up of a puncture to a disk.}
\label{fig_blow_up}
\end{center}
\end{figure}

Define $\Gamma_0 = h_0(\Gamma)$ and define the homeomorphism 
\begin{equation}
f_0 \colon \Gamma_0 \ra \Gamma_0, \quad f_0 = h_0 \circ f|_{\Gamma} \circ h_0^{-1}
\end{equation}
and define the continuous $\phi_0 \colon \Gamma_0 \ra \Gamma$ where $\phi_0 = h_0^{-1}$. Note that, by construction, 
$f_0 = \phi_0^{-1} \circ f|_{\Gamma} \circ \phi_0$. Define $\Sigma_0 := \Int(B_{\epsilon_0 \delta_0})$ and $\gamma_0 := \partial \Sigma_0$, 
see again Figure~\ref{fig_blow_up}. Consider the points $z_{\pm 1} := \phi_0^{-1}(f^{\pm 1}(z_0))$ and define
\begin{equation}\label{eq_blow_up_d1}
d_1 = \frac{1}{4} \min \left\{ (1/4)^2, d(z_{-1}, z_1), d(z_{-1}, \Sigma_0 ), d(z_{1}, \Sigma_0) \right\} > 0.
\end{equation}
Given $0 < \epsilon_0 < \epsilon < 1$, define $\epsilon' = \frac{\epsilon + \epsilon_0}{2}$ and define the second auxiliary planar map
\begin{equation} 
q_{\epsilon} \colon \widetilde{B}_{\epsilon} \setminus \widetilde{B}_{\epsilon_0} \ra \widetilde{B}_{\epsilon} \setminus \widetilde{B}_{\epsilon'}, \quad q_{\epsilon} = \hat{g}_{\epsilon' / \epsilon} \circ \hat{g}^{-1}_{\epsilon_0 / \epsilon},
\end{equation}
where $r_{\epsilon} : \widetilde{B}_{1} \ra \widetilde{B}_{\epsilon}$ is a linear (planar) rescaling, and
\begin{equation}
\hat{g}_{\delta / \epsilon} := r_{\epsilon} \circ g_{ \delta / \epsilon} \circ r_{\epsilon}^{-1},
\end{equation}
for $\epsilon_0 \leq \delta < \epsilon$. Let $\lambda_{\epsilon} \colon \widetilde{B}_{\epsilon} \hookrightarrow B_{\epsilon \delta_0}$ 
be the linear injection of the disk $\widetilde{B}_{\epsilon} \subset \R^2$ onto the disk $B_{\epsilon \delta_0} \subset \T^2$. 
Define $A_{\epsilon, \epsilon'} = B_{\epsilon' \delta_0} \setminus B_{\epsilon \delta_0}$ and 
\begin{equation}\label{eq_constr_k_eps}
\hat{q}_{\epsilon} \colon A_{\epsilon, \epsilon_0} \ra A_{\epsilon, \epsilon'}, \quad 
\hat{q}_{\epsilon} = \lambda_{\epsilon} \circ q_{\epsilon} \circ \lambda_{\epsilon}^{-1}.
\end{equation}
In words, $\hat{q}_{\epsilon}$ has the effect of mapping the annulus $A_{\epsilon, \epsilon_0}$ radially, i.e. along (part of) the foliation $\FF_0$, to the annulus $A_{\epsilon, \epsilon'}$ with the same outer boundary curve, but larger inner boundary curve, so as to half the modulus of the annulus. There exist $0 < \epsilon_0 < \epsilon_{\pm 1} <1$, such that, if we denote $A_{\pm 1} := f^{\pm 1}_0(A_{\epsilon_{\pm 1}, \epsilon_0})$, then $\diam(A_{\pm 1}) \leq d_1$. Define 
\begin{equation}\label{eq_constr_h_1}
h_{\pm 1} \colon A_{\pm 1} \ra h_{\pm 1}( A_{\pm 1} ), \quad h_{\pm 1} = f_0^{\mp 1} \circ \hat{q}_{\epsilon_{\pm 1}} \circ f_0^{\pm 1},
\end{equation}
defined on $A_{-1} \cup A_{1}$ and we extend $h_{\pm 1}$ to $\T^2$ by declaring it to be the identity off $A_{\pm 1}$. 
The annuli $A_{\pm 1}$ are foliated by $\FF_{\pm 1} = f_0|_{A_{\epsilon_{\pm 1}, \epsilon_0}} ( \FF_0 )$. 
The maps $h_{\pm 1}$ have the effect of blowing up the puncture $z_{\pm 1}$ along the foliation $\FF_{\pm 1}$ to a disk $\Sigma_{\pm 1}$, 
see Figure~\ref{fig_blow_up_foliation}. Denote $\Sigma_{\pm 1}$ the open disks obtained by blowing up the corresponding puncture 
$z_{\pm 1}$. As $\partial \Sigma_{\pm 1} = f_{0}( C_{\epsilon'_{\pm 1}} )$, where $C_{\epsilon'_{\pm 1}}$ is the Euclidean circle centered 
at $z_0$ of radius $\epsilon_0 < \epsilon'_{\pm 1} < \epsilon_{\pm 1}$, $\gamma_{\pm 1} := \partial \Sigma_{\pm 1}$ is a simple closed curve, as $f^{\pm 1}_{0}|_{A_{\epsilon_{\pm 1}, \epsilon_0}}$ is a homeomorphism.

\begin{figure}[h]
\begin{center}
\psfrag{D_1}{$A_1$}
\psfrag{D_2}{$h_1(A_1)$}
\psfrag{Sigma_1}{$\Sigma_1$}
\psfrag{rho'}{$\rho_{1, \theta}$}
\psfrag{z_1}{$z_1$}
\psfrag{f_1}{$h_1$}
\includegraphics[scale=0.8]{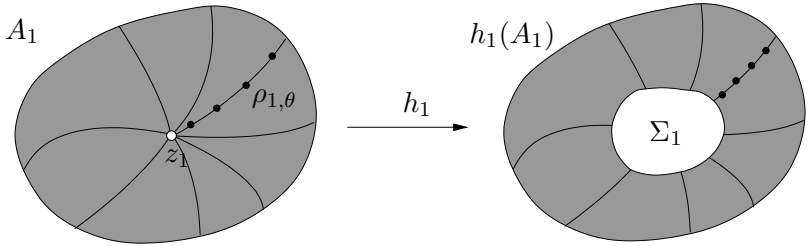}
\caption[Blow-up of the puncture $z_1$]{Blowing up the puncture $z_1$ to a disk; $\rho_{1, \theta}$ is a leaf of the foliation $\FF_1$ emanating from $z_1$ and $h_1$ has the effect of pulling back points on $\rho_{1, \theta} \in \FF_1$ along this leaf, for every $\theta \in [0,2 \pi)$.}
\label{fig_blow_up_foliation}
\end{center}
\end{figure}

Define $\hat{h}_1 := h_{-1} \circ h_1$ on $A_{-1} \cup A_{1}$, define $\Gamma_1 = \hat{h}_1 ( \Gamma_0 )$ 
and define the homeomorphism
\begin{equation}\label{eq_constr_f_1}
f_1 \colon \Gamma_1 \ra \Gamma_1, \quad f_1 := \hat{h}_1 \circ f_0 \circ \hat{h}_1^{-1}.
\end{equation}
Further, define the continuous $\phi_1 \colon \Gamma_1 \ra \Gamma$ where $\phi_1 = \phi_0 \circ \hat{h}_1^{-1}$.

\medskip

We proceed by induction. Assume we have blown up the punctures $z_{k}$ to disks $\Sigma_k$, where $-n+1 \leq k \leq n-1$,
and consider the points $z_{\pm n} := \phi_{n-1}^{-1}(f^{\pm n}(z_0))$. Define $\Delta_{n-1} = \bigcup_{k=-n+1}^{n-1} \Sigma_k$ and define
\begin{equation}\label{eq_constr_d_n}
d_{n} = \frac{1}{4} \min \left\{ (1/4)^{n+1}, d(z_{-n}, z_{n}), d(z_{-n}, \Delta_{n-1} ), d(z_{n}, \Delta_{n-1} ) \right\} > 0.
\end{equation}
There exist $0 < \epsilon_0 < \epsilon_{\pm n} < 1$, such that $\diam(A_{\pm n}) \leq d_n$. Define
\begin{equation}\label{eq_constr_h_n}
h_{\pm n} \colon A_{\pm n} \ra h_{\pm n}( A_{\pm n} ), \quad h_{\pm n} = f_{n-1}^{\mp n} \circ \hat{q}_{\epsilon_{\pm n}} \circ f_{n-1}^{\pm n},
\end{equation}
defined on $A_{\pm n}$, where we can extend $h_{\pm n}$ to $\T^2$ by declaring it to be the identity off $A_{\pm n}$. 
The annuli $A_{\pm 1}$ are foliated by $\FF_{\pm n} = f^{n}_{n-1}|_{A_{\epsilon_{\pm n}, \epsilon_0}} ( \FF_0 )$. 
The maps $h_{\pm n}$ blow up the puncture $z_{\pm n}$ along the foliation $\FF_{\pm n}$ to a disk $\Sigma_{\pm n}$. 
The boundaries $\gamma_{\pm n}$ are again simple closed curves, as $\gamma_{\pm n} = f^{\pm n}_{n-1}( C_{\epsilon'_{\pm n}} )$, 
where $C_{\epsilon'_{\pm n}}$ is the Euclidean circle centered at $z_0$ of radius $\epsilon_0 < \epsilon'_{\pm n} < \epsilon_{\pm n}$ and
$f^{\pm n}_{n-1}|_{A_{\epsilon_{\pm n}, \epsilon_0}}$ is a homeomorphism. Define $\hat{h}_n := h_{-n} \circ h_n$ on $A_{-n} \cup A_{n}$, 
define $\Gamma_n = \hat{h}_n ( \Gamma_{n-1} )$ and define the homeomorphism
\begin{equation}\label{eq_constr_f_n}
f_n \colon \Gamma_n \ra \Gamma_n, \quad f_n := \hat{h}_n \circ f_{n-1} \circ \hat{h}_n^{-1}.
\end{equation}
Further, define the continuous $\phi_n \colon \Gamma_n \ra \Gamma$ where $\phi_n = \phi_{n-1} \circ \hat{h}_n^{-1}$.

\medskip

Next, we show that the above sequences of maps and homeomorphisms converge and have the desired properties. First, by~\eqref{eq_constr_d_n} combined with~\eqref{eq_constr_h_n}, it holds that $\Gamma_n \subset \Gamma_{n-1}$ and that $\Gamma_{\infty} = \lim_{n \ra \infty} \Gamma_n$ converges in the Hausdorff sense, as $\sum_{n \geq 0} 1/4^{n+1} < 1 < \infty$. Denote $\NN = \Cl(\Gamma_{\infty})$. Notice that 
$\NN = \Gamma_{\infty} \cup \bigcup_{k \in \Z} \gamma_k$, since no point in $\Sigma_k$ can be the limit point of points in $\Gamma_{\infty}$ as $\Sigma_k \cap \Gamma_{\infty} = \emptyset$. Furthermore, note that, as the boundary curves $\gamma_k$ are simple closed curves, the extension $\bar{\phi}_n \colon \Cl(\Gamma_n) \ra \T^2$ of $\phi_n$ is continuous. 

\begin{lem}\label{lem_constr_converge}
The homeomorphisms $f_n \colon \Gamma_n \ra \Gamma_n$ converge to a homeomorphism $f_{\infty} \colon \Gamma_{\infty} \ra \Gamma_{\infty}$,
and extends to a homeomorphism $f' \in \homeo_*(\T^2)$ with $f'(\NN) = \NN$. Further, the disks $\{ \Sigma_k \}$ in the complement of $\NN$ 
are interiors of closed topological disks. Similarly, the continuous maps $\phi_n$ converge to a continuous map $\phi_{\infty} \colon \Gamma_{\infty} \ra \Gamma$, and extends to a continuous $\phi \colon \T^2 \ra \T^2$ for which $\phi(\NN) = \T^2$. Furthermore, $f'$ is semi-conjugate to $f$ 
through $\phi$.
\end{lem}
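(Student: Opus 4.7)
The plan is to combine two ingredients: the summability of the modification scales $d_n \le 4^{-(n+1)}$ coming from~\eqref{eq_constr_d_n}, which yields uniform Cauchy estimates for both $(f_n)$ and $(\phi_n)$, and the Schoenflies theorem applied to the Jordan curves $\gamma_k$ to extend the limits across the blown-up disks.

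First I would verify uniform convergence. Since $\hat{h}_n$ is the identity outside $A_{-n}\cup A_n$, a set of diameter at most $d_n$, the defining identity $\phi_n = \phi_{n-1}\circ \hat{h}_n^{-1}$ gives $\|\phi_n - \phi_{n-1}\|_\infty \le d_n$ on $\Gamma_n$, so $(\phi_n)$ is Cauchy and converges uniformly to a continuous $\phi_\infty$ on $\Gamma_\infty$. A slightly longer estimate applied to $f_n = \hat{h}_n \circ f_{n-1} \circ \hat{h}_n^{-1}$, controlling $|f_n(x)-f_{n-1}(x)|$ by $d_n$ away from $A_{\pm n}\cup f_{n-1}^{-1}(A_{\pm n})$ and using uniform continuity of $f_{n-1}$ on that neighbourhood, shows that $f_n$ and $f_n^{-1}$ are Cauchy sequences, so $f_\infty$ exists and is a homeomorphism of $\Gamma_\infty$.

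Next I would extend both limits to $\NN = \Gamma_\infty \cup \bigcup_{k\in\Z}\gamma_k$. Each $f_n$ and $\phi_n$ extends continuously to $\Cl(\Gamma_n)$ because every $\gamma_k$ is the homeomorphic image of a round circle $C_{\epsilon'_k}$, and the Cauchy estimates persist on these closed sets; the limit then satisfies $f_\infty(\gamma_k)=\gamma_{k+1}$. The key geometric input is that $\phi_\infty$ collapses each $\gamma_k$ to the single point $f^k(z_0)$: this follows from the telescoping $\phi_n = h_0^{-1}\circ \hat{h}_1^{-1}\circ \cdots \circ \hat{h}_n^{-1}$ together with the fact that each foliation-based map $\hat{h}_m^{-1}$ with $m>|k|$ contracts a neighbourhood of $\gamma_k$ along the foliation $\FF_{\pm m}$ towards the puncture being filled. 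Each $\gamma_k$ is a Jordan curve in $\T^2$, so by Schoenflies $\Cl(\Sigma_k)$ is a closed topological disk. I then fix a Schoenflies homeomorphism $\Cl(\Sigma_0)\ra\Cl(\Sigma_1)$ extending $f_\infty|_{\gamma_0}$ and propagate inductively to define $f'$ on every $\Cl(\Sigma_k)$, obtaining a global homeomorphism $f'\colon\T^2\ra\T^2$ with $f'|_\NN = f_\infty$ and $f'(\NN)=\NN$. I extend $\phi$ by declaring $\phi(\Cl(\Sigma_k))=\{f^k(z_0)\}$; continuity follows because $\diam(\Sigma_k)\to 0$, and $\phi(\NN)=\T^2$ since the orbit $\OO_f(z_0)$ is dense in $\Gamma$ under the hypotheses built into $\homeo_\#(\T^2)$.

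Finally, the semi-conjugacy relation $\phi_n\circ f_n = f\circ \phi_n$ on $\Gamma_n$ is a direct induction (the factors $\hat{h}_n^{\pm 1}$ cancel), and passing to the limit gives $\phi\circ f' = f\circ \phi$ on $\Gamma_\infty$; on each $\Cl(\Sigma_k)$ both sides equal the constant $f^{k+1}(z_0)$. Because $h_0$ and every $\hat{h}_n$ is isotopic to the identity, so is $f'$, and $\phi$ is homotopic to the identity as a uniform limit of maps homotopic to the identity; a standard argument then gives $\rho(f')=\rho(f)\bmod\Z^2$, so $f'\in\homeo_*(\T^2)$. The hard part will be the collapsing statement $\phi(\gamma_k)=\{f^k(z_0)\}$: it is precisely where the foliation-based (rather than arbitrary) choice of blow-up is essential, since one needs each $\hat{h}_m^{-1}$ to pull specific annular neighbourhoods of $\gamma_k$ radially along the $\FF_{\pm m}$ towards the punctures so that the telescoping product actually contracts $\gamma_k$ to a point in the limit.
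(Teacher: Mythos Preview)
Your overall architecture matches the paper's proof: summability of the $d_n$ gives uniform convergence, one extends to $\NN$, then across the disks, and the semi-conjugacy passes to the limit. However, you have misplaced the one genuinely delicate step.

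You assert that ``each $f_n$ and $\phi_n$ extends continuously to $\Cl(\Gamma_n)$ because every $\gamma_k$ is the homeomorphic image of a round circle $C_{\epsilon'_k}$ \ldots\ the limit then satisfies $f_\infty(\gamma_k)=\gamma_{k+1}$''. This is exactly where the foliation is needed, and the reason you give does not suffice. Consider the first non-trivial case, $f_n$ near $\gamma_{n-1}$. There $\hat h_n^{-1}$ is the identity, so $f_n=\hat h_n\circ f_{n-1}$. The continuous extension of $f_{n-1}$ to $\gamma_{n-1}$ collapses $\gamma_{n-1}$ to the single puncture $z_n$, while $\hat h_n$ does \emph{not} extend continuously to $z_n$ (it blows $z_n$ up to $\gamma_n$). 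A composition ``collapse a circle to a point, then blow the point back up to a circle'' has no reason to extend to a continuous map, let alone a homeomorphism $\gamma_{n-1}\to\gamma_n$. The paper resolves this by tracking leaves: $h_n$ is built to act along the foliation $\FF_n=f_{n-1}(\FF_0)$, so a point $z(\theta)\in\gamma_{n-1}$ on the leaf $\rho_{n-1,\theta}$ is sent to the unique point of $\gamma_n$ on the image leaf $\rho_{n,\theta}$, and continuity of the foliations gives continuity (and injectivity) of $f_n|_{\gamma_{n-1}}$. Without this argument you cannot apply Alexander's trick (your ``Schoenflies homeomorphism'') to extend across the disks.

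Conversely, the step you flag as the hard one, $\phi(\gamma_k)=\{f^k(z_0)\}$, is almost immediate and does not need the mechanism you describe. For $m>|k|$ the map $\hat h_m^{-1}$ is the identity on $\gamma_k$ by the very choice of $d_m$ in~\eqref{eq_constr_d_n}; it does not contract anything near $\gamma_k$. What collapses $\gamma_k$ is the single map $\hat h_{|k|}^{-1}$ (for $k\ne 0$) or $h_0^{-1}$ (for $k=0$), which by construction sends $\gamma_k$ to the puncture $z_k$, and then $\phi_{|k|-1}$ sends $z_k$ to $f^k(z_0)$. So $\bar\phi_n(\gamma_k)=\{f^k(z_0)\}$ for every $n\ge|k|$, and the limit follows. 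Finally, $\phi(\NN)=\T^2$ does not require density of $\OO_f(z_0)$: since $\phi$ is homotopic to the identity it is surjective, and $\phi(\Sigma_k)\subset\phi(\gamma_k)\subset\phi(\NN)$ gives $\phi(\NN)=\phi(\T^2)=\T^2$.
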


\begin{proof}
First, we show that $f_n \ra f_{\infty}$ converges to a homeomorphism of $\Gamma_{\infty}$. Indeed, for every $n \geq 0$,
$f_n \colon \Gamma_n \ra \Gamma_n$ is a homeomorphism and we observed above that $\Gamma_n \ra \Gamma_{\infty}$ converges. 
As $\hat{h}_n$ moves points by no more than the distance of $d_n \leq 1/4^{n+1}$, and $\sum_{n \geq 0} 1/4^{n+1} < 1 < \infty$, $f_n \ra f_{\infty}$ converges uniformly and thus the limit $f_{\infty}$ is a homeomorphism. Further, we observed that $\gamma_k$ is a simple closed curve, 
for every $k \in \Z$ and thus $\Sigma_k$ is the interior of the closed topological disk $\Cl(\Sigma_k) = \Sigma_k \cup \gamma_k$. 

Next, we show that $f_{\infty}$ extends to a homeomorphism $f'$ of $\NN$. To this end, we first show that $f_{\infty}$ induces a homeomorphism from
$\gamma_k$ to $\gamma_{k+1}$, for every $k \in \Z$. To prove this, we note that the disks $\Sigma_k$, for $-n \leq k \leq n$, which have been constructed after $n$ steps, are left unmoved by future perturbations by virtue of our choice of $d_n$. Moreover, again by our choice of $d_n$, it holds that $f_n|_{\gamma_k} = f_{\infty}|_{\gamma_k} \colon \gamma_k \ra \gamma_{k+1}$, for $-n \leq k \leq n-1$, where $f_n|_{\gamma_k}$ and $f_{\infty}|_{\gamma_k}$ are the extensions of $f_n$ and $f_{\infty}$ to $\gamma_k$. To prove that ${f_n}_{|_{\gamma_k}}$ is a homeomorphism, it suffices to show that ${f_n}_{|_{\gamma_k}}$ is one-to-one and continuous. We prove this by induction, where we consider $0 \leq k \leq n$, 
the case for negative $k$ being handled by considering the inverse. 

Assume that after step $n-1$, we have shown that $f_{n-1}|_{\gamma_k} \colon \gamma_k \ra \gamma_{k+1}$, for $0 \leq k \leq n-2$ are homeomorphisms and consider step $n$, where we have to show that $f_{n}|_{\gamma_{n-1}} \colon \gamma_{n-1} \ra \gamma_{n}$ is a homeomorphism.
By choice of $\epsilon_n$, $A_n$ is disjoint from the previously constructed disks and disjoint from $A_{-n}$. Restricting to a smaller neighbourhood of $A_{n-1}$ if necessary, we may as well assume that $A_{n-1} \cap A_n = \emptyset$. As $h_n = \hat{h}_n |_{A_n}$ (as defined by~\eqref{eq_constr_h_1}) acts along the foliation $\FF_n$, $f_n$ sends leaves of $\FF_{n-1}$ to leaves of $\FF_n$ which foliate $A_{n-1}$ and $A_n$ respectively. To each $\theta \in [0,2\pi)$ corresponds a unique point $z(\theta) \in \gamma_{n-1}$ lying on $\rho_{n-1,\theta} \in \FF_{n-1}$, which is by $f_n$ mapped to a unique point $z'(\theta) = f_n(z(\theta)) \in \gamma_n$ lying on $\rho_{n, \theta} = f_n(\rho_{n-1, \theta})$. As these foliations are continuous, and as the curves $\gamma_{n-1}$ and $\gamma_n$ are (continuous) simple closed curves, the points $z'(\theta)$ vary continuously as $\theta$ varies, and thus continuously as $z(\theta)$ varies and this is what we needed to show. 

By induction, $f_{\infty}$ extends homeomorphically to every boundary curve $\gamma_k$, $k \in \Z$. It thus follows that the extension $f'$ to 
$\NN$ is one-to-one, as $\NN = \Gamma_{\infty} \cup \bigcup_{k \in \Z} \gamma_k$. To show $f'$ is continuous, we distinguish between two cases.
First, let $z \in \Gamma_{\infty}$. As $f_{\infty}$ is a homeomorphism, given a neighbourhood $V \subset \NN$ containing $z'$, we can find a 
small neighbourhood $U \subset \NN$, containing the point $z$ for which $f'(z) = z'$, such that $\Cl(f_{\infty}(W)) \subset V$, where $W = U \cap \Gamma_{\infty}$. As $\Cl(\Gamma_{\infty}) = \NN$ and $f'$ extends homeomorphically to $\NN$, we have that $f'(U) = f'(\Cl(W)) = \Cl( f_{\infty}(W)) \subset V$. Secondly, suppose that $z \in \gamma_k$ for some $k \in \Z$. For $N \geq k+1$, it holds that $f_N|_{\gamma_k} = f_{\infty}|_{\gamma_k} \colon \gamma_k \ra \gamma_{k+1}$ is a homeomorphism. Therefore, given a neighbourhood $V \subset \NN$ containing $z' = f_N(z) = f'(z)$, there exists a small neighbourhood $U \ni z$, such that $f_N(U) \subset V$. Choosing $N$ larger, and a smaller neighbourhood $U' \subset U$ containing $z$, if necessary, as $\sum_{n \geq N} d_n \ra 0$ for $N \ra \infty$, we have that $f'(U') \subset V$ as well. Thus $f'$ is continuous, and therefore a homeomorphism, being one-to-one as well. We can extend the homeomorphism $f' \colon \NN \ra \NN$ to a homeomorphism of $\T^2$ by extending, e.g. by Alexander's trick, the induced homeomorphisms of the boundary curves $\gamma_k$ to homeomorphisms of the corresponding closed disks $\Cl(\Sigma_k) = \Sigma_k \cup \gamma_k$. As the disks $\Sigma_k$ are disjoint, and $\diam(\Sigma_k)$ forms a null-sequence, the extension of $f' \colon \NN \ra \NN$ to $\T^2$ is a homeomorphism, which we denote again by $f'$. 

To show that $\phi \colon \NN \ra \T^2$ is continuous, we recall that $\Cl(\Gamma_n) = \T^2 \setminus \Delta_n$, where $n \geq 0$. 
As we observed, for every $n \geq 0$, $\phi_n \colon \Gamma_n \ra \Gamma$ is continuous and it extends to a continuous 
$\bar{\phi}_n \colon \Cl(\Gamma_n) \ra \T^2$. As $\phi_n = \phi_{n-1} \circ \hat{h}_n^{-1}$, with $\hat{h}_n$ as in~\eqref{eq_constr_h_n}, 
whose norm is bounded by $d_n$, $\phi = \lim_{n \ra \infty} \bar{\phi}_n \colon \NN \ra \T^2$ is continuous as a limit of uniformly converging continuous maps $\bar{\phi}_n$. By declaring $\phi(\Sigma_k) = f^k(z_0)$, $\phi$ extends to a continuous map defined on $\T^2$.

Finally, we show that $f' \in \homeo_{\#}(\T^2)$. First, we observe that, as $\RR_{\pi}$ is uncountable, and a countable number of points
of $\RR_{\pi}$ is blown up to disks, we have that $\RR_{\pi'}$ is uncountable. Further, the $\phi$ thus constructed is homotopic to the identity.
Thus it suffices to show that $\phi \circ f' = f \circ \phi$. For this, we note that for every $n \geq 0$ we have that 
$\phi_n \circ f_n = f|_{\Gamma} \circ \phi_n$, where $f_n \colon \Gamma_n \ra \Gamma_n$ and $\phi_n \colon \Gamma_n \ra \Gamma$. 
As both $f_n$ and $\phi_n$ converge uniformly and extend continuously to $\NN$, it follows that $\phi \circ f' = f \circ \phi$, 
where $f' \colon \NN \ra \NN$. Further, as $\Sigma_k$, along with $\gamma_k$, is mapped to a single point by $\phi$, it thus also holds 
that $\phi \circ f' = f \circ \phi$ when $f'$ is extended to $\T^2$.
\end{proof}

The following lemma, which combines Lemma~\ref{lem_semi_conjugation} and Lemma~\ref{lem_constr_converge} is the key ingredient in the 
construction of Examples~\ref{ex_cylinders} and~\ref{ex_type_III}. Let $B_{\delta_0} \setminus \{ z_0\} \subset \T^2$ be an embedded punctured disk centered at $z_0 \in \RR_{\pi}$ with $\delta_0 \leq 1/4$ and $\FF_0$ the corresponding foliation of $B_{\delta} \setminus \{z_0\}$ by straight rays emanating from $z_0$, in the notation of the construction above. A {\em wedge}\index{wedge} $\WW(r, \theta_1, \theta_2) \subset B_{r\delta_0} \setminus \{ z_0\}$ is the region bounded by two leaves $\rho_{\theta_1}, \rho_{\theta_2} \in \FF_0$, where $0 < | \theta_1 - \theta_2 | < \pi$ and $0 < r \leq 1$.

\begin{lem}\label{lem_struct_min_sets}
In the construction above, let $f'$ be semi-conjugate to $f$ through $\phi$ by blowing up the orbit $\OO_f(z_0)$, with $z_0 \in \RR_{\pi}$, 
to disks whose interiors are $\Sigma_k$, and $\gamma_k = \partial \Sigma_k$, where $k \in \Z$. Let $\MM'$ be the minimal set of $f'$ and 
define $\pi' = \pi \circ \phi$. Then
\begin{enumerate}
\item[\tu{(1)}] $\MM' = \Cl(\RR_{\pi'}) = \Cl( \phi^{-1}(\RR_{\pi} \setminus \OO_{f}(z_0)))$,
\item[\tu{(2)}] $\gamma_k \subset \MM'$, for all $k \in \Z$, if for every $0 < r \leq 1$ and every $\theta_1, \theta_2 \in [0,2\pi)$, 
with $0< | \theta_1 - \theta_2| < \pi$, we have that $\WW(r, \theta_1, \theta_2) \cap ( \RR_{\pi} \setminus \OO_f(z_0) ) \neq \emptyset$.
\end{enumerate} 
\end{lem}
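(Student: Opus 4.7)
The plan is to reduce~(1) to Lemma~\ref{lem_semi_conjugation} applied with the new semi-conjugacy $\pi' = \pi \circ \phi$, and to handle~(2) by producing sequences of points in $\RR_{\pi'}$ accumulating on each $\gamma_k$ via the radial structure of $h_0$. First I would check that $f' \in \homeo_{\#}(\T^2)$ with semi-conjugacy $\pi'$ to $\tau$: the relation $\pi' \circ f' = \tau \circ \pi'$ and the fact that $\pi'$ is continuous and homotopic to the identity follow directly from Lemma~\ref{lem_constr_converge} combined with $\pi \circ f = \tau \circ \pi$, and monotonicity is immediate since both $\pi$ and $\phi$ are monotone (the $\phi$-fibres are either singletons or the closed disks $\Cl(\Sigma_k)$); uncountability of $\RR_{\pi'}$ will drop out of the identification in the next step.

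For~(1), using $(\pi')^{-1}(\pi'(z)) = \phi^{-1}(\pi^{-1}(\pi(\phi(z))))$ I would split into two cases. If $\phi(z) = f^k(z_0)$ for some $k$, then already $\phi^{-1}(\phi(z)) = \Cl(\Sigma_k)$ has more than one point, so $z \notin \RR_{\pi'}$. If instead $\phi(z) \notin \OO_f(z_0)$, then $\phi^{-1}$ is single-valued at $\phi(z)$, so $(\pi')^{-1}(\pi'(z)) = \{z\}$ becomes equivalent to $\pi^{-1}(\pi(\phi(z))) = \{\phi(z)\}$, i.e.\ to $\phi(z) \in \RR_{\pi}$. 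This yields $\RR_{\pi'} = \phi^{-1}(\RR_{\pi} \setminus \OO_f(z_0))$, whence Lemma~\ref{lem_semi_conjugation} gives $\MM' = \Cl(\RR_{\pi'})$.

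For~(2), by $f'$-invariance of $\MM'$ combined with $f'(\gamma_k) = \gamma_{k+1}$, it suffices to show $\gamma_0 \subset \MM'$. The key observation I would use is that each perturbation $\hat{h}_n$ for $n \geq 1$ is supported on $A_{\pm n}$, which by the choice of $d_n$ in~\eqref{eq_constr_d_n} is disjoint from $\Cl(\Sigma_0)$; hence on a fixed torus-neighbourhood of $\Cl(\Sigma_0)$ the map $\phi$ reduces to $\phi_0 = h_0^{-1}$. Given $w \in \gamma_0$ with angular parameter $\theta$ along the foliation $\FF_0$, the wedge hypothesis supplies, for each sufficiently large $n$, a point $p_n \in \WW(1/n, \theta - 1/n, \theta + 1/n) \cap (\RR_{\pi} \setminus \OO_f(z_0))$. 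Setting $q_n := h_0(p_n)$, the locality of $\phi$ near $\Sigma_0$ gives $\phi(q_n) = p_n$, so $q_n \in \phi^{-1}(\RR_{\pi} \setminus \OO_f(z_0)) = \RR_{\pi'}$; and since $h_0$ acts along leaves of $\FF_0$ through $g_{\epsilon_0}(r, \theta) = ((r + \epsilon_0)/(1 + r \epsilon_0), \theta)$, the convergence $p_n \to z_0$ along rays whose angles tend to $\theta$ forces $q_n \to w$, placing $w$ in $\Cl(\RR_{\pi'}) = \MM'$.

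The main obstacle I anticipate is the bookkeeping in step~(2): verifying that for large $n$ the point $p_n$ falls into the neighbourhood where $\phi$ reduces to $h_0^{-1}$, and that the limit of $h_0(p_n)$ on $\gamma_0$ is exactly $w$ rather than some other point of $\gamma_0$. Both reduce to unpacking the radial structure of $h_0$ together with the shrinking-support property of the corrections $\hat{h}_n$ guaranteed by~\eqref{eq_constr_d_n}, so they should be routine once the setup is written out.
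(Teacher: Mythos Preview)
Your treatment of~(1) is correct and essentially identical to the paper's.

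For~(2), there is a genuine gap: the claim that $\phi$ agrees with $\phi_0 = h_0^{-1}$ on a \emph{fixed} neighbourhood of $\Cl(\Sigma_0)$ is false. Each individual support $A_{\pm n}$ is indeed disjoint from $\Cl(\Sigma_0)$ by~\eqref{eq_constr_d_n}, but since $z_0 \in \RR_\pi \subset \MM$ the orbit $\OO_f(z_0)$ is dense in $\MM$ and in particular recurs arbitrarily close to $z_0$; consequently the punctures $z_{\pm n}$, and with them the supports $A_{\pm n}$, accumulate on $\gamma_0$. Hence no neighbourhood of $\gamma_0$ is avoided by all later perturbations, and your $q_n := h_0(p_n)$ need not coincide with the true preimage $\phi^{-1}(p_n)$, so you cannot conclude $q_n \in \RR_{\pi'}$. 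Your closing remark frames this as ``verifying that $p_n$ falls into the neighbourhood where $\phi$ reduces to $h_0^{-1}$'', but the issue is precisely that no such neighbourhood exists.

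The paper sidesteps this via a stagewise passage to the limit. Setting $\RR^n_\pi := \phi_n^{-1}(\RR_\pi \setminus \OO_f(z_0))$, it first argues (by a contrapositive wedge argument, equivalent in content to your direct sequence $p_n$) that the hypothesis gives $\gamma_0 \subset \Cl(\RR^0_\pi)$. For each fixed $n \geq 1$ the composite $\hat{h}_n \circ \cdots \circ \hat{h}_1$ is continuous, carries $\RR^0_\pi$ onto $\RR^n_\pi$, and is the identity on $\gamma_0$ (each $\hat{h}_m$ being the identity off $A_{\pm m}$, which misses $\Cl(\Sigma_0)$); continuity then yields $\gamma_0 \subset \Cl(\RR^n_\pi)$ for every $n$, and uniform convergence $\phi_n \to \phi$ pushes this through to $\gamma_0 \subset \Cl(\RR_{\pi'}) = \MM'$. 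The salvage for your approach is the same idea: replace $q_n$ by $\phi^{-1}(p_n) \in \RR_{\pi'}$ and use that the accumulated corrections, while their supports may approach $\gamma_0$, have sizes shrinking to zero there, so $\phi^{-1}(p_n)$ still converges to $w$.
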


\begin{proof}
To prove (1), as $\RR_{\pi}$ is uncountable, $R^0_{\pi} \neq \emptyset$. As the points $f^k(z_0)$ are blown up to disks, i.e. 
$\phi^{-1}(f^k(z_0)) = \Cl(\Sigma_k)$, where $\Cl(\Sigma_k)$ is a closed topological disk, we have that $\RR_{\pi'} = \phi^{-1}( \RR_{\pi} \setminus \OO_{f}(z_0))$. By Lemma~\ref{lem_semi_conjugation}, we have that $\MM' = \Cl(\RR_{\pi'})$, and this proves (1).

To prove (2), define $\RR^n_{\pi} := \phi_n^{-1}( \RR_{\pi} \setminus \OO_f(z_0))$. First assume that $\gamma_0 \subset \Cl(R^0_{\pi})$. As the size of the perturbations $\hat{h}_n$, by virtue of our choice of $d_n$, converge to zero as the perturbations approach $\gamma_0$, it holds that $\gamma_0 \subset \Cl(R^n_{\pi})$ for every $n \geq 0$. As the maps $\phi_n$ converge, it thus holds that $\gamma_0 \subset \Cl( \RR_{\pi'} ) = \MM'$, by (1). As $\MM'$ is $f'$-invariant, and $f'(\gamma_k) = \gamma_{k+1}$, we have that $\gamma_k \subset \MM'$, for every $k \in \Z$. To finish the proof, suppose that $\WW(r, \theta_1, \theta_2) \cap ( \RR_{\pi} \setminus \OO_f(z_0) ) \neq \emptyset$ for every $ 0 < r \leq 1$ and 
every $\theta_1, \theta_2 \in [0,2\pi)$ for which $0< | \theta_1 - \theta_2| < \pi$. We have to show that $\gamma_0 \subset \Cl(R^0_{\pi})$. Suppose, to derive a contradiction, that $\gamma_0 \cap \Cl( R^0_{\pi} ) \neq \gamma_0$. As $\gamma_0 \cap \Cl( R^0_{\pi} )$ is closed, this implies there exists an open subarc $\eta \subset \gamma_0$ such that $\eta \cap \Cl( R^0_{\pi} ) = \emptyset$. Let $z \in \eta$ be the midpoint of $\eta$ and let $\eta' \subset \eta$ be a closed subsegment properly contained in $\eta$, and containing $z \in \eta$, with endpoints 
$\{ z^-, z^+ \} = \partial \eta'$. Let $\rho_{\theta_1}$ and $\rho_{\theta_2}$ be the two rays passing through $z^-$ and $z^+$ and $\WW(1, \theta_1, \theta_2)$ the corresponding wedge. As $\Cl( R^0_{\pi} )$ is closed, there exists an open neighbourhood $U \supset \eta'$ such that $U \cap \Cl( R^0_{\pi} ) = \emptyset$. However, this implies that $\WW(r, \theta_1, \theta_2) \cap ( \RR_{\pi} \setminus \OO_f(z_0) ) = \emptyset$ for $r > 0$ sufficiently small, contrary to our assumption.
\end{proof}

\subsection{Minimal sets of type I}\label{ex_type_I}

It is well-known that, given any Sierpi\'nski set $S \subset \T^2$, there exist a homeomorphism $f \in \homeo_*(\T^2)$ for which the 
minimal set $\MM = S$. The following example can be found in~\cite[Thm 3]{bis_1}. We will only sketch the proof.

\begin{example}[Type I : a quasi-Sierpi\'nski set~\cite{bis_1}]\label{ex_quasi_sierp}
There exist homeomorphisms $f \in \homeo_*(\T^2)$ for which the minimal set $\MM$ is a quasi-Sierpi\'nski set,
but not a Sierpi\'nski set.
\end{example}

\begin{figure}[h]
\begin{center}
\psfrag{Sigma}{$\Sigma$}
\psfrag{Sigma_1}{$\Sigma_1$}
\psfrag{Sigma_2}{$\Sigma_2$}
\psfrag{sim}{$\slash \sim$}
\psfrag{eta}{$\eta$}
\includegraphics[scale=0.5]{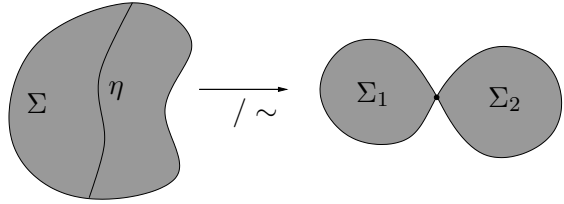}
\caption[A quasi-Sierpi\'nski set]{Construction of a quasi-Sierpi\'nski set: collapsing arcs to points.}
\label{fig_example_quasi_sierp}
\end{center}
\end{figure}

\begin{proof}[Sketch of the proof]
Let $\MM$ be a Sierpi\'nski minimal set of an $f \in \homeo_*(\T^2)$ and let $\Sigma$ be a component of the complement of $\MM$. 
Denote $\Sigma_n = f^n(\Sigma)$ and $\gamma_n = \partial \Sigma_n$. Take an arc $\eta \subset \Cl(\Sigma)$ such that only the endpoints of 
$\eta$ intersect $\gamma = \gamma_0$, see Figure~\ref{fig_example_quasi_sierp}. Let $\eta_n := f^n(\eta) \subset \Cl(\Sigma_n)$ the corresponding arcs in the image disks. Using techniques from decomposition theory, it can be shown that $\T^2 \slash \sim$, where $z \sim z'$ if and only 
if $z, z' \in \eta_n$ (i.e. collapsing the arcs $\eta_n$ to points), yields a well-defined quotient space homeomorphic to $\T^2$ and that $\MM$ quotients to a quasi-Sierpi\'nski set $\MM' = \MM \slash \sim$, which is not a Sierpi\'nski set. The corresponding quotient homeomorphism $f' \in \homeo_*(\T^2)$ has $\MM'$ as its minimal set and this minimal set $\MM'$ is locally connected.
\end{proof}

Next, we give an example of a minimal set which is of type I, but not locally connected. It shows the existence of homeomorphisms $f \in \homeo_*(\T^2)$ for which the minimal set $\MM$ is such that the complement consists of a single unbounded disk $\Sigma$, which
is $f$-invariant.

\begin{example}[Type I : unbounded disks]\label{ex_unbdd_disk}
There exist minimal sets $\MM$ of homeomorphisms $f \in \homeo_*(\T^2)$ of type I such that the complement of $\MM$ in $\T^2$ is a single unbounded disk.
\end{example}

The example uses a {\em derived-from-Anosov} type argument\footnote{See for example~\cite[Chapter 4]{pm} for these and related constructions.}, constructed initially by P. McSwiggen~\cite{mcswiggen} on the $3$-torus and subsequently generalized to the general $(k+1)$-torus, 
for $k \geq 1$ in~\cite{mcswiggen2}. For completeness, we recall this discussion for the case $k=1$ and collect the necessary results needed for our construction. Almost all of this material, up to the proof of Example 2, is taken {\em verbatim} from~\cite{mcswiggen2}.

Start with a hyperbolic $A \in \SL(2, \Z)$ and let $g_0 \colon \T^2 \ra \T^2$ be the induced linear toral automorphism. Let $z_0 \in \T^2$, with $z_0 = p(0)$ where $0 \in \R^2$ is the origin and $p \colon \R^2 \ra \T^2$ is the canonical projection, be the fixed point of $g_0$. Let $v^s, v^u \in \R^2$ be the stable and unstable eigenvector of $A$ respectively and $\lambda_s, \lambda_u$ the corresponding eigenvalues. Let $\FF^s_{\tu{lin}}, \FF^u_{\tu{lin}}$ be the stable and unstable foliations respectively relative to $g_0$ of $\T^2$ by parallel lines and let $\ell_0 \subset \T^2$ be the unstable leaf of $\FF^u_{\tu{lin}}$ passing through the saddle fixed point $z_0$. Relative to the standard basis, the eigenvectors $v^s, v^u$ have irrational slope, and, consequently, every leaf of $\FF^s_{\tu{lin}}$ and $\FF^u_{\tu{lin}}$ is an isometric immersion 
into $\T^2$ of a copy of $\R$. For future reference, define $\FF_{\tu{hor}}$ be the foliation of $\T^2$ by horizontal (relative to the standard basis) simple closed curves, parametrized by $y \in \T^1$; i.e. $\FF_{\tu{hor}} = \{ C_{y} \}_{y \in \T^1}$ with $C_y \subset \T^2$ the curve of height $y \in \T^1$.

Next we perturb $g_0$ on a small neighbourhood $U \ni z_0$ of the original saddle fixed point of $g_0$ to turn it into a repeller and create two additional saddle fixed points $z_{-1}, z_{1}$ close to $z_0$ through a pitch-fork bifurcation,
resulting in a DA diffeomorphism. We first describe the perturbation and subsequently we scale it into the neighbourhood $U$ of $z_0$. For this, we use the following, see~\cite[Lemmas 1.3 and 1.4]{mcswiggen2}.

\begin{lem}\label{lem_lambda}
Fix $\rho > 0$. Given any $\upsilon > \rho$ and any arbitrarily small $\delta >0$, for each $\kappa$ with $\rho > \kappa > 0$, there exists a $C^{\infty}$ function $\lambda_{\kappa} \colon \R \ra \R$ satisfying the following conditions.
\begin{enumerate}
\item[\tu{(i)}] $\lambda_{\kappa}(t) \equiv \upsilon$ for $t \leq 0$ and $\lambda_{\kappa}(t) \equiv \kappa$ for $t \geq 1$
\item[\tu{(ii)}] $- \delta \leq t \lambda_{\kappa}'(t) \leq 0$ for all $t$ and $\lambda_{\kappa}'(t) < 0$ if $\upsilon > \lambda_{\kappa}(t) > \kappa$. In addition, if $\kappa \geq \kappa'$, then $0 \leq \lambda_{\kappa}(t) - \lambda_{\kappa'}(t) \leq \kappa - \kappa'$
for all $t$.
\item[\tu{(iii)}] There exists $t_0 >0$, depending only on $\rho$ and $\delta$, where $\lambda_{\kappa}(t_0) = \rho$ and such that $\lambda_{\kappa}(t)$ is independent of $\kappa$ for $t \leq t_0$ and independent of $\upsilon$ for $t \geq t_0$.
\end{enumerate}
Further, if we define $\eta(t) = t \lambda(t)$, then $0 \leq (\lambda - \eta')(t) \leq \delta$ and $0 \leq \eta(t) - \kappa t \leq \delta$ for $t \geq 0$.
\end{lem}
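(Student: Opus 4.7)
The plan is to reduce the construction to a logarithmic variable. Set $s = -\log t$ for $t > 0$ and $\mu_\kappa(s) := \lambda_\kappa(e^{-s})$, so that $\mu_\kappa'(s) = -t\lambda_\kappa'(t)$ and the slope bound in (ii) becomes $0 \leq \mu_\kappa'(s) \leq \delta$; $\mu_\kappa$ must then rise monotonically from $\kappa$ at $s = 0$ (matching $\lambda_\kappa \equiv \kappa$ on $[1,\infty)$) to $\upsilon$ as $s \to +\infty$ (matching $\lambda_\kappa \equiv \upsilon$ near $t = 0$), with at least length $(\upsilon - \kappa)/\delta$ available for the rise. I will place the intermediate value $\rho$ at $s_0 := \rho/\delta$, i.e.\ at $t_0 := e^{-\rho/\delta}$; this $t_0$ depends only on $\rho$ and $\delta$ and makes $[0,s_0]$ long enough to accommodate the rise from any $\kappa \in (0,\rho)$ to $\rho$ at slope at most $\delta$. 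Fix once and for all a smooth template $\xi:[0,\infty)\to[0,\infty)$ which is strictly increasing, with $\xi(0) = 0$, $\xi(s_0) = \rho$, $\xi(s)\to+\infty$, $0 \leq \xi'(s) \leq \delta$ everywhere, $\xi'(s_0) > 0$, and all derivatives of $\xi$ vanishing at $s = 0$. Fix also two smooth corner functions $\chi_0, \chi_1 : \R \to \R$: $\chi_0(y) = 0$ for $y \leq 0$, $\chi_0(y) = y$ for $y \geq 1/2$, with $\chi_0$ convex and strictly increasing on $(0,1/2)$; and $\chi_1(y) = y$ for $y \leq 1/2$, $\chi_1(y) = 1$ for $y \geq 1$, smooth and strictly increasing on $(1/2,1)$. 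The convexity and the boundary slopes $\chi_0'(0) = 0$, $\chi_0'(1/2) = 1$ force $0 \leq \chi_0' \leq 1$ and similarly $0 \leq \chi_1' \leq 1$.

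Now set $\lambda_\kappa(t) = \kappa + (\rho - \kappa)\chi_0\bigl((\xi(-\log t) - \kappa)/(\rho - \kappa)\bigr)$ on $[t_0,1]$, $\lambda_\kappa(t) = \rho + (\upsilon - \rho)\chi_1\bigl((\xi(-\log t) - \rho)/(\upsilon - \rho)\bigr)$ on $(0,t_0]$, and extend by $\lambda_\kappa \equiv \upsilon$ on $(-\infty,0]$ and $\lambda_\kappa \equiv \kappa$ on $[1,\infty)$. The key observation is that in a neighborhood of $s_0$ one has $\xi(s)$ close to $\rho$, so $(\xi(s) - \kappa)/(\rho - \kappa)$ lies in $[1/2,\infty)$, where $\chi_0$ is the identity, and $(\xi(s) - \rho)/(\upsilon - \rho)$ lies in $(-\infty,1/2]$, where $\chi_1$ is the identity; both pieces therefore coincide with the single universal function $\xi(-\log t)$ on a common neighborhood of $t_0$. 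This yields (iii) and $C^\infty$-smoothness at $t_0$ at once: all derivatives of $\lambda_\kappa$ at $t_0$ are the derivatives of $\xi(-\log t)$, and depend on neither $\kappa$ nor $\upsilon$. The plateaus at the endpoints join smoothly by the vanishing of all derivatives of $\xi$ at $s = 0$ and of $\chi_1$ at $y = 1$, proving (i). The slope condition in (ii) is $\mu_\kappa'(s) = \chi_0'(y)\xi'(s) \leq \delta$ together with the symmetric estimate for the upper piece, while strict monotonicity $\lambda_\kappa'(t) < 0$ on the open region $\{\kappa < \lambda_\kappa < \upsilon\}$ follows because $\xi'>0$ strictly and $\chi_0',\chi_1'$ are strictly positive on their respective transition intervals and equal to $1$ where the pieces coincide with $\xi$ (in particular $\lambda_\kappa'(t_0) < 0$, because near $t_0$ the function is $\xi(-\log t)$ and $\xi'(s_0) > 0$). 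The $\kappa$-monotonicity reduces to $\partial_\kappa \mu_\kappa = 1 - \chi_0(y) - \chi_0'(y)(1-y)$ with $y = (\xi-\kappa)/(\rho-\kappa)$, and the convex inequality $\chi_0(y) + \chi_0'(y)(1-y) \leq \chi_0(1) = 1$, combined with $\chi_0,\chi_0' \geq 0$, pins this derivative inside $[0,1]$, giving $0 \leq \lambda_\kappa - \lambda_{\kappa'} \leq \kappa - \kappa'$ for $\kappa \geq \kappa'$.

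Finally, the auxiliary inequalities on $\eta(t) = t\lambda_\kappa(t)$ follow by direct estimation: the identity $\lambda_\kappa - \eta' = -t\lambda_\kappa'$ turns the first into (ii), while $\eta(t) - \kappa t = t(\lambda_\kappa(t) - \kappa) \geq 0$ for $t \geq 0$ is immediate from $\lambda_\kappa \geq \kappa$. For the upper bound, on $[t_0,1]$ one uses $\chi_0(y) \leq y^+$ and $\xi(s) \leq \delta s$ (from $\xi(0)=0$, $\xi'\leq\delta$) to get $t(\lambda_\kappa - \kappa) \leq e^{-s}\xi(s) \leq \delta s e^{-s} \leq \delta/e$; on $(0,t_0]$ a parallel estimate using $\max_{x\geq 0} x e^{-x/\delta} = \delta/e$ and the choice $t_0 = e^{-\rho/\delta}$ yields $t(\lambda_\kappa - \kappa) \leq 2\delta/e < \delta$. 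The main technical hurdle is to reconcile $C^\infty$-smoothness at $t_0$ with both the $\kappa$- and $\upsilon$-independence of all derivatives there (forced by (iii)) and strict negativity of the derivative (forced by (ii)); the device of $\chi_0$ and $\chi_1$, which makes both sides of $t_0$ reduce to the single universal profile $\xi$ in a neighborhood of $t_0$, is exactly what makes these three demands compatible.
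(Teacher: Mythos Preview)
The paper does not supply a proof of this lemma; it is quoted, with attribution, from McSwiggen~\cite[Lemmas 1.3 and 1.4]{mcswiggen2}. So there is no argument in the paper to compare yours against.

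Your overall strategy---the logarithmic substitution $s=-\log t$, which turns the awkward condition $-\delta\le t\lambda_\kappa'(t)\le 0$ into the pleasant bounded-slope condition $0\le\mu_\kappa'(s)\le\delta$, and then the patching of a $\kappa$-dependent lower piece to a $\upsilon$-dependent upper piece through a common universal germ $\xi$ near $s_0$---is sound and is exactly the right picture.

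However, the explicit building blocks you specify do not exist. You ask for a $C^\infty$ convex function $\chi_0$ with $\chi_0(y)=0$ for $y\le 0$ and $\chi_0(y)=y$ for $y\ge 1/2$. Convexity and the endpoint slopes $\chi_0'(0)=0$, $\chi_0'(1/2)=1$ indeed force $0\le\chi_0'\le 1$ on $[0,1/2]$, as you note; but then
\[
\tfrac12=\chi_0(\tfrac12)-\chi_0(0)=\int_0^{1/2}\chi_0'(y)\,dy\le\tfrac12
\]
saturates this bound, forcing $\chi_0'\equiv 1$ on $[0,1/2]$ and contradicting $\chi_0'(0)=0$. The identical saturation kills your template $\xi$: with $s_0=\rho/\delta$, the requirements $\xi(0)=0$, $\xi(s_0)=\rho$, $\xi'\le\delta$ force $\xi'\equiv\delta$ on $[0,s_0]$, incompatible with your demand that all derivatives of $\xi$ vanish at $0$.

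These defects are repairable, but not by a one-symbol change. You must give yourself slack on both ends: take, say, $s_0=2\rho/\delta$, and let $\chi_0$ vanish for $y\le -\tfrac12$ rather than for $y\le 0$, so that the transition interval for $\chi_0$ has length $1$ and the average-slope obstruction disappears. This in turn forces you to arrange $\xi(0)$ negative enough that $(\xi(0)-\kappa)/(\rho-\kappa)\le -\tfrac12$ for every admissible $\kappa$, which is easy once $s_0$ has been enlarged. With these adjustments your convexity argument for the $\kappa$-monotonicity, and the rest of the verification, go through unchanged. (Incidentally, the final bound $\eta(t)-\kappa t\le\delta$ follows from~(ii) alone without casework: $\lambda_\kappa(t)-\kappa=\mu_\kappa(s)-\mu_\kappa(0)\le\delta s$, whence $t(\lambda_\kappa(t)-\kappa)\le -\delta\,t\log t\le\delta/e$.)
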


Fix $\upsilon$ with $\lambda_u > \upsilon > 1$. Choose $\rho$ with $1 > \rho > \lambda_s$ and $\delta >0$ small enough so that $1-\delta > \rho$.
Choose $\bar{\upsilon}$ satisfying $1- \delta \geq \bar{\upsilon} > \rho$. Let $\lambda, \bar{\lambda}$ be the functions of Lemma~\ref{lem_lambda} relative to the 
choices of $\kappa = \lambda_s$ and these $\upsilon$ and $\bar{\upsilon}$ respectively. Denote $E_1 \oplus E_2$ the splitting of the tangent bundle $T\R^2$ into the unstable and stable direction relative to $A \in \SL(2, \Z)$. By a linear change of coordinates $B$ taking $E_1 \oplus E_2$ to $\R^2$, the action reads $B A B^{-1}(x,y) = (\lambda_u x, \lambda_s y)$. Let $\chi_0, \chi_1, \chi_2 \colon \R \ra [0,1]$ be even $C^{\infty}$ functions satisfying $\chi_0 + \chi_1+ \chi_2 \equiv 1$, $\chi_0(0) = 1$; $\chi_0(t) \equiv 0$ for $|t| \geq 1$; $\chi_2(t) \equiv 0$ for $|t| \leq 1$ and $\chi_2(t) \equiv 1$ for $|t| \geq 2$. We may further assume that $|\chi'_i| \leq 2$ for all $i$. 

Define $G(x,y) = (\lambda_u x, L(x,y) y)$, where $L(x,y) = \chi_0 \lambda(y) + \chi_1(x) \bar{\lambda}(y) + \chi_2 \lambda_s$. The derivative of $G$ has the following form
\begin{equation} 
DG_{(x,y)} = \left( \begin{matrix} \lambda_u & 0 \\ c(x,y) & N(x,y) \end{matrix} \right), 
\end{equation}
where 
\begin{equation}
c(x,y) = \left \{ \begin{matrix} \chi_0'(x) ( \lambda(y) - \bar{\lambda}(y))y & |x| \leq 1 \\ \chi_1'(x) ( \lambda(y) - \lambda_s)y & 1 \leq |x| \leq 2 \end{matrix} \right.
\end{equation}
and $N(x,y) = L(x,y) + (\chi_0(x) \lambda'(y) + \chi_1(x) \bar{\lambda}'(y))y$. The following estimates hold, cf.~\cite[Lemma 2.2]{mcswiggen2}.

\begin{lem}\label{lem_diff_est}
We have that $| c(x,y) | \leq 2 \delta$ and $\lambda_s \leq |N(x,y)| \leq \upsilon$. Moreover, if $|y| \geq t_0$ or $|x| \geq 1$, then $N(x,y) < 1$.
\end{lem}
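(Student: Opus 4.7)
The plan is to verify each of the three inequalities by direct computation from the defining formulas for $c(x,y)$ and $N(x,y)$, exploiting the bounds collected in Lemma~\ref{lem_lambda}. The algebraic identity that streamlines the analysis of $N$ is $y\lambda'(y) = \eta'(y) - \lambda(y)$ (and the analogue for $\bar\eta, \bar\lambda$), obtained by differentiating $\eta(t) = t\lambda(t)$. Substituting it into the definition of $N$ collapses three terms and rewrites
\[
N(x,y) = \chi_0(x)\,\eta'(y) + \chi_1(x)\,\bar\eta'(y) + \chi_2(x)\,\lambda_s,
\]
which, since $\chi_0+\chi_1+\chi_2\equiv 1$ and each $\chi_i\ge 0$, displays $N$ as a convex combination of $\eta'(y)$, $\bar\eta'(y)$, and $\lambda_s$.

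For the bound $|c(x,y)|\le 2\delta$, I would treat the regions $|x|\le 1$ and $1\le|x|\le 2$ separately. In both regions the hypothesis $|\chi_i'|\le 2$ reduces the problem to bounding the remaining factor by $\delta$. The last inequality of Lemma~\ref{lem_lambda} gives $|y(\lambda(y)-\lambda_s)| \le \delta$, which handles the second region; for the first region, applying the same inequality to both $\lambda$ and $\bar\lambda$ and subtracting yields $|y(\lambda(y)-\bar\lambda(y))| = |\eta(y)-\bar\eta(y)|\le\delta$.

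For the two-sided bound $\lambda_s\le|N(x,y)|\le\upsilon$, the convex-combination representation reduces everything to locating $\eta'(y)$ and $\bar\eta'(y)$ in $[\lambda_s,\upsilon]$. The upper estimate $\eta'(y)\le\lambda(y)\le\upsilon$ (and $\bar\eta'(y)\le\bar\upsilon<\upsilon$) follows immediately from the inequality $0\le\lambda-\eta'$ contained in Lemma~\ref{lem_lambda}. The lower estimate $\eta'(y)\ge\lambda_s$ requires a slightly closer inspection of the profile of $\lambda$: on the regions where $\lambda$ is constant ($t\le 0$ and $t\ge 1$) it is immediate, while on the interpolation region one uses that $\lambda(y)\ge\lambda_s$ together with the $\delta$-smallness of $y\lambda'(y)$ afforded by Lemma~\ref{lem_lambda}(ii). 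I expect this to be the only mildly technical step in the proof, since elsewhere the inequalities are direct substitutions.

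Finally, for the strict bound $N(x,y)<1$ when $|y|\ge t_0$ or $|x|\ge 1$, I would split into the two hypotheses. If $|y|\ge t_0$, Lemma~\ref{lem_lambda}(iii) gives $\lambda(y),\bar\lambda(y)\le\rho<1$, so $\eta'(y),\bar\eta'(y)\le\rho<1$, and since $\lambda_s<1$, the convex combination $N$ is strictly less than $1$. If $|x|\ge 1$, then $\chi_0(x)=0$, so $N(x,y) = \chi_1(x)\bar\eta'(y) + \chi_2(x)\lambda_s$ with $\chi_1(x)+\chi_2(x)\le 1$; since $\bar\eta'(y)\le\bar\lambda(y)\le\bar\upsilon\le 1-\delta$ by choice of $\bar\upsilon$, we conclude $N\le 1-\delta<1$.
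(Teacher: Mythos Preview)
Your proposal follows essentially the same route as the paper: bound $c$ via $|\chi_i'|\le 2$ and the estimate $0\le \eta(t)-\lambda_s t\le\delta$ from Lemma~\ref{lem_lambda}, and handle $N$ via the identity $N=\chi_0\eta'+\chi_1\bar\eta'+\chi_2\lambda_s$ together with $0\le\lambda-\eta'\le\delta$. Your convex-combination rewriting of $N$ is made explicit from the outset, whereas the paper only unwinds it in the final case, but the substance of the argument---and the mild incompleteness in justifying the lower bound $\eta'(t)\ge\lambda_s$ (your sketch, like the paper's, really yields $\eta'(t)\ge\lambda_s-\delta$)---is identical.
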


\begin{proof}
Set $t = |y|$ and $\hat{t} = y/|y|$. For $|x| \leq 1$ $c(x,y) = \chi_0'(x)(\lambda(t)t - \bar{\lambda}(t)t) \hat{t}$. As $\lambda(t) t - \bar{\lambda}(t) t = \eta(t) - \bar{\eta}(t) \leq \eta(t) - \lambda_st \leq \delta$ by 
Lemma~\ref{lem_lambda}, it holds that $|c(x,y)| \leq \delta |\chi_0'(x)| \leq 2 \delta$ by choice of $\chi_0$. For $1 \leq |x| \leq 2$, $\lambda(t) - \bar{\lambda}(t)$ is replaced by $\bar{\lambda}(t)t - \lambda_s t = \bar{\eta}(t) - \lambda_s$ 
which has the same estimate. For $|x| \geq 2$, we have that $c(x,y) = 0$. 

To estimate $N(x,y)$, we first estimate $(\chi_0(x) \lambda'(y) + \chi_1(x) \bar{\lambda}'(y))y$. As $|y \lambda'(y)| \leq \delta$ and $|y \bar{\lambda}'(y)| \leq \delta$ by Lemma~\ref{lem_lambda}, 
we have that $|(\chi_0(x) \lambda'(y) + \chi_1(x) \bar{\lambda}'(y))y| \leq \delta$. As $\lambda(t), \bar{\lambda}(t) \geq \lambda_s$, we have that $|N(x,y)| \geq \lambda_s$.
For $t \geq t_0$, $\lambda(t) = \bar{\lambda}(t) \leq \rho$, again by Lemma~\ref{lem_lambda}. Therefore, $|N(x,y)| \leq \rho + \delta < 1$. If $|x| \geq 1$, we have that $|N(x,y)| \leq \bar{\lambda}(t) + \delta \leq \bar{\upsilon} + \delta < 1$.
Therefore, $|N(x,y)| < 1$ for $|y| \geq t_0$ or $|x| \geq 1$. Finally, if $t \leq t_0$ and $|x| \leq 1$, we have that $\lambda(t) \geq \bar{\lambda}(t)$. We have that $|N(x,y)| = | \chi_0 (\lambda(t) + t \lambda'(t)) + \chi_1( \bar{\lambda}(t) + t \bar{\lambda}'(t)) |$
As $\eta'(t) \leq \lambda(t)$ and $\bar{\eta}'(t) \leq \bar{\lambda}(t) \leq \lambda(t)$ by Lemma~\ref{lem_lambda}, and as $\eta'(t) = \lambda(t) + t \lambda'(t)$ and $\bar{\eta}'(t) = \bar{\lambda}(t) + t \bar{\lambda}'(t)$ by definition, it follows that
$|N(x,y)| = | \chi_0 \eta'(t) + \chi_1 \bar{\eta}'(t) | \leq \lambda(t) \leq \upsilon$ for all $t$. This finishes the proof.
\end{proof}

In particular, the map $G \colon \R^2 \ra \R^2$ is a $C^{\infty}$ diffeomorphism agreeing with $B A B^{-1}$ outside the square $\{(x,y) ~|~ |x| \leq 2,~ |y| \leq 1 \}$ and, as $N(0,0) = \upsilon > 1$, the origin is turned into a repeller. 
We can now rescale the perturbation so that the support is contained in $U \ni z_0$, the fixed point of the original linear $g_0$, and we obtain a perturbed system $g \colon \T^2 \ra \T^2$. Since rescaling does not affect estimates on first derivatives, 
relative to the splitting $E_1 \oplus E_2$, the derivative of $g$ has the form 
\begin{equation}
Dg_z = \left( \begin{matrix} \lambda_u & 0 \\ \widetilde{c}(z) & \widetilde{N}(z) \end{matrix} \right), 
\end{equation}
where $|\widetilde{c}(z)| \leq 2 \delta$ and $\lambda_s \leq |\widetilde{N}(z)| \leq \upsilon$ for all $z \in \T^2$. With $g$ defined as such, $Dg$ leaves the line bundle $E_2$ invariant and, although it is not a contraction on $E_2$, it expands in this direction by no more 
than $\upsilon < \lambda_u$. Set $E^{ps} := E_2$. Even though $E_1$ is no longer invariant, there is a new invariant line bundle $E^u$ everywhere close to $E_1$ which $Dg$ expands uniformly by $\lambda_u$ in a suitable metric and thus $E^u$ is a strong unstable 
direction. To see this, let $s \colon \LL(E_1, E_2) \ra \T^2$ be the standard vector bundle over $\T^2$ with fibers $s^{-1}(z) \colon \lin(E_1, E_2)$, where $\lin(E_1, E_2)$ is the space of linear maps from $E_1$ to $E_2$. A continuous section of $\LL(E_1, E_2)$ 
corresponds to a continuous line field on $\T^2$. If $S_z \in \lin(E_1(z), E_2(z))$, then the induced action of $Dg_z$ is given by $S_z \mapsto \widetilde{c}(z) \lambda_u^{-1} + \widetilde{N}(z) S_z \lambda_u^{-1} \in \lin(E_1(g(z), E_2(g(z))$. 
As $|\widetilde{N}(z) \lambda_u^{-1}| \leq \upsilon \lambda_u^{-1} < 1$, $Dg$ contracts the section, hence there is a unique invariant section $\mathcal{S} \in \LL(E_1, E_2)$. If we define $E^u$ to be the corresponding invariant line field for $Dg$, 
then $E^u \oplus E^{ps}$ is the continuous invariant splitting under $Dg$ of $\T^2$. As the image of the zero section is bounded in norm by 
\[ \sup_{z \in \T^2} \widetilde{c}(z) \lambda_u^{-1} \leq 2 \delta \lambda_u^{-1}, \] 
$Dg$ takes the disk of radius $r := 2 \delta (\lambda_u - \upsilon)^{-1}$ into itself, which implies $E^u(z)$ is contained in the cone $\{ (u,v) \in T_z \T^2 ~|~ |v| \leq r |u| \}$ for each $z \in \T^2$.
Choose $\delta >0$ so that $r \leq 1$. Defining an equivalent metric $\| (u,v) \|$ on $\T^2$ by declaring that $\|(u,v)\| = \max\{ |u|, |v| \}$ for $(u,v) \in E_1 \oplus E_2$, then under this metric $\|(u,v)\|= |u|$ for $(u,v) \in E^u$.
As $E^u$ is invariant, this implies that $Dg_z(u,v) \in E^u$ and thus $\| Dg_z (u,v) \| =  | \lambda_u  u | = \lambda_u | u | =  \lambda_u \|(u,v)\|$. Therefore, under this metric, $Dg$ expands $E^u$ by $\lambda_u$. Thus, all of $\T^2$ is a pseudo-hyperbolic set for $g$ 
with strong unstable direction $E^u$.

The classical theory of (un)stable manifolds now gives that, through each point $z \in \T^2$, there is a unique strong unstable manifold, which is a $C^{\infty}$ immersed copy of $\R$. As two such manifolds are either disjoint or equal, the union of these leaves 
foliate the torus $\T^2$ and this foliation, denoted $\FF^u_{\tu{DA}}$, is continuous. If $r$ is sufficiently small, implying that $\delta$ is sufficiently small, $\FF^u_{\tu{DA}}$ will be everywhere transverse to the leaves of the foliation $\FF_{\tu{hor}}$. Let us summarize this.

\begin{lem}\label{lem_DA_1}
The diffeomorphism $g \colon \T^2 \ra \T^2$ admits an $g$-invariant strong unstable lamination (i.e. a $C^0$ foliation by smooth leaves) $\FF^u_{\tu{DA}}$ whose leaves are everywhere nearly parallel to the linear unstable foliation 
$\FF^u_{\tu{lin}}$ and hence everywhere transversal to the horizontal foliation $\FF_{\tu{hor}}$ of $\T^2$.
\end{lem}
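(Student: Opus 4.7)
The plan is to assemble the lemma directly from the pseudo-hyperbolic structure established in the paragraphs preceding it, together with the standard strong unstable manifold theorem. First I would observe that, in the adapted metric $\|(u,v)\| = \max\{|u|,|v|\}$ relative to the splitting $E^u \oplus E^{ps}$, the derivative $Dg$ satisfies the domination estimate
\begin{equation*}
\| Dg|_{E^u} \| = \lambda_u \quad \text{and} \quad \| Dg|_{E^{ps}} \| \leq \upsilon < \lambda_u,
\end{equation*}
so the invariant line bundle $E^u$ is the fastest-expanding direction. The classical strong unstable manifold theorem (applied at each point and in fact in the partially hyperbolic setting simply obtained from the graph transform) then produces, through every $z \in \T^2$, a unique $C^\infty$ immersed curve tangent to $E^u$ everywhere. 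Two such curves are either disjoint or coincide, and they fit together into a $C^0$ foliation with $C^\infty$ leaves, which I define to be $\FF^u_{\tu{DA}}$. Its $g$-invariance follows from the $Dg$-invariance of $E^u$.

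Second, I would verify that the leaves of $\FF^u_{\tu{DA}}$ are uniformly close to those of $\FF^u_{\tu{lin}}$. This is immediate from the way $E^u$ was constructed: as a fixed point of the graph transform on sections of $\LL(E_1,E_2)$, which is a contraction of factor $|\widetilde{N}(z)|\lambda_u^{-1} \leq \upsilon \lambda_u^{-1} < 1$, with the image of the zero section bounded in norm by $2\delta \lambda_u^{-1}$. A standard fixed-point estimate therefore gives that the unique invariant section $\mathcal{S}$ has sup-norm at most $r = 2\delta(\lambda_u - \upsilon)^{-1}$, so that $E^u(z)$ lies in the cone $\{(u,v) \in E_1(z)\oplus E_2(z) : |v| \leq r|u|\}$ for every $z$. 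Choosing $\delta$ small forces $r$ to be as small as we like, hence the tangent directions of $\FF^u_{\tu{DA}}$ are uniformly $C^0$-close to those of $\FF^u_{\tu{lin}}$, and consequently the leaves themselves are nearly parallel.

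Third, for transversality with $\FF_{\tu{hor}}$: since $A \in \SL(2,\Z)$ is hyperbolic, its unstable eigenvector $v^u$ has irrational slope relative to the standard basis, so there is a constant $\theta_0 > 0$ bounding the angle between $E_1$ and the horizontal direction from below. Shrinking $\delta$ (and hence $r$) so that the cone described above has half-angle less than $\theta_0/2$ ensures that $E^u(z)$ makes an angle of at least $\theta_0/2$ with the horizontal bundle at every point. Since the leaves of $\FF_{\tu{hor}}$ are tangent to the horizontal bundle and those of $\FF^u_{\tu{DA}}$ to $E^u$, this yields uniform transversality.

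No step is really an obstacle: the existence of the lamination is handed to us by the pseudo-hyperbolic splitting together with the classical unstable manifold theorem, and the geometric closeness and transversality follow from the same cone estimate that was already used to produce $E^u$. The one point that needs care is ensuring that $\delta$ has been chosen small enough so that the single constant $r$ simultaneously makes $\FF^u_{\tu{DA}}$ close to $\FF^u_{\tu{lin}}$ and keeps it uniformly away from the horizontal direction, but this is a harmless adjustment since all the inequalities above depend monotonically on $\delta$.
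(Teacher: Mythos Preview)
Your proposal is correct and follows essentially the same approach as the paper. In fact, the paper does not give a formal proof of this lemma at all: it is stated explicitly as a summary of the preceding discussion, which establishes the pseudo-hyperbolic splitting, the cone estimate $E^u(z) \subset \{|v|\leq r|u|\}$, and then invokes the classical strong unstable manifold theorem and the smallness of $r$ to obtain the foliation and its transversality to $\FF_{\tu{hor}}$. Your write-up recapitulates exactly these points, adding only the harmless explicit quantification of the angle $\theta_0$ between $E_1$ and the horizontal direction.
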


Recall that $z_0 \in \T^2$ is the repelling fixed point of $g$. Let $K$ be the set of points at which $g$ does not constract $E_2$. By Lemma~\ref{lem_diff_est}, $K$ is contained in the rectangle described in local coordinates by 
$|x| \leq 1$ and $|y| \leq t_0$. Let $\{z_{-1}, z_1 \}$ be the two intersection points with $\partial K$ with the local pseudo-stable leaf of $\FF^{ps}_{\tu{DA}}$ passing through $z_0$, which in local coordinates corresponds to $\{ (0,y) ~|~ \lambda(y) = 1\}$.
Since $\lambda$ is strictly decreasing, there is a unique $r_0$ such that $\lambda(r_0) = 1$. Let $J \subset \T^2$ be the open arc contained in the pseudo-stable leaf of $\FF^{ps}_{\tu{DA}}$ passing through $z_0$ whose endpoints coincide with $z_{-1}$ and $z_1$. 
As $\lambda(r_0) = 1$, $z_{-1}, z_1$ are hyperbolic fixed points for $g$. Let $\Sigma \subset \T^2$ be the basin of repulsion of $z_0$. Clearly, $\Sigma$ is the domain obtained by taking the union of all unstable leaves of 
$\FF^u_{\tu{DA}}$ passing through points $z \in J$ and $\Sigma$ is bounded by the two unstable leaves $W^u_{\pm 1} \in \FF^u_{\tu{DA}}$ passing through $z_{-1}$ and $z_1$ respectively. Further note that $\Sigma$ is $g$-invariant.

In what follows, let $\{ I_y^i \}_{i \in \Z}$ be the collection of connected components of $C_y \cap \Sigma$. The following holds, see~\cite[Thm 2.4]{mcswiggen2} for a detailed proof. 

\begin{lem}\label{lem_DA_semi}
If we define $h_y , h'_y \colon C_y \ra C_y$ the holonomy homeomorphisms, i.e. the return map to $C_y$ along the unstable foliation, of the linear and perturbed system respectively, 
then for every $y \in \T^1$, there exists a semi-conjugacy $\pi_y \colon C_y \ra C_y$ between the perturbed and linear system, i.e. a continuous $\pi_y$ such that $\pi_y \circ h'_y = h_y \circ \pi_y$. 
Further, for every $y \in \T^1$, $\Sigma \cap C_y$ is dense in $C_y$ and $h'_y(\bar{I}_y^i) \cap \bar{I}_y^i = \emptyset$ for every $i \neq 0$, where $\bar{I}$ denotes the closure $\Cl(I)$ of the 
interval $I$ in $C_y$.
\end{lem}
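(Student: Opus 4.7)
The strategy is to recognize $h'_y$ as a fixed-point-free orientation-preserving circle homeomorphism with the same (irrational) rotation number as $h_y$, invoke the Poincar\'e Classification Theorem stated at the start of the paper to obtain $\pi_y$, and then exploit the strip structure of $\Sigma$ to pin down the geometry of $\Sigma \cap C_y$.

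First, by Lemma~\ref{lem_DA_1} the foliation $\FF^u_{\tu{DA}}$ is transverse to $\FF_{\tu{hor}}$, so the first return map $h'_y \colon C_y \to C_y$ is a well-defined orientation-preserving homeomorphism. I would then argue $h'_y$ has no periodic points: a periodic point would force a leaf of $\FF^u_{\tu{DA}}$ to close up on $\T^2$, but each lifted leaf stays in a bounded strip around a line in the (irrational) direction of $v^u$ and hence cannot close. Next, to obtain $\rho(h'_y) = \rho(h_y)$, I would interpolate $\FF^u_{\tu{lin}}$ and $\FF^u_{\tu{DA}}$ by a continuous family $\{\FF^u_t\}_{t \in [0,1]}$ of foliations transverse to $\FF_{\tu{hor}}$ (feasible because the cone of transverse line fields is open and convex); the corresponding return maps $h_y^{(t)}$ form a $C^0$-continuous family of fixed-point-free homeomorphisms, and since rotation number is continuous at irrational values, $\rho(h'_y) = \rho(h_y) \notin \Q$. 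The Poincar\'e Classification Theorem then produces the desired continuous monotone surjection $\pi_y$ with $\pi_y \circ h'_y = h_y \circ \pi_y$.

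For the density claim, $\partial \Sigma = W^u_{-1} \cup W^u_1$ consists of two leaves of $\FF^u_{\tu{DA}}$. By the bounded-strip comparison with the dense linear unstable leaves, each $W^u_{\pm 1}$ is an immersed copy of $\R$ dense in $\T^2$; transversality then gives $W^u_{\pm 1} \cap C_y$ dense in $C_y$, and as each such point is a one-sided accumulation point of the open set $\Sigma \cap C_y$, density of $\Sigma \cap C_y$ in $C_y$ follows. For the wandering property, $\Sigma$ is $g$-invariant and $\FF^u_{\tu{DA}}$-saturated, so $h'_y$ permutes $\{I_y^i\}$. If $(h'_y)^n(\bar I_y^i) \cap \bar I_y^i \neq \emptyset$ for some $n \neq 0$, then either $(h'_y)^n(\bar I_y^i) = \bar I_y^i$, which forces $(h'_y)^n$ to fix the two endpoints and contradicts the absence of periodic points, or the two closed intervals share exactly one endpoint; this latter case is excluded by the strip structure of $\Sigma$ --- each $\bar I_y^i$ has one endpoint on $W^u_{-1}$ and the other on $W^u_1$, since $\Sigma$ is locally a strip between these two boundary leaves cut transversally by $C_y$, while holonomy along $\FF^u_{\tu{DA}}$ maps $W^u_{-1} \cap C_y$ into itself and $W^u_1 \cap C_y$ into itself, so endpoints on different leaves cannot coincide.

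The main obstacle is verifying the ``strip configuration'' in the wandering step, namely that every connected component $I_y^i$ of $\Sigma \cap C_y$ is bounded on opposite sides by $W^u_{-1}$ and $W^u_1$. This is intuitively clear from the local picture near $z_0$, where $\Sigma$ is a union of near-horizontal unstable leaves through the pseudo-stable arc $J$, but requires careful tracking through the global wrapping of $\Sigma$ on $\T^2$ to rule out a horizontal circle cutting the immersed strip in an interval with both endpoints on the same boundary leaf. The continuity of the rotation number under the interpolation in the first step is a secondary technical point, requiring a check that transversality to $\FF_{\tu{hor}}$ is preserved throughout the deformation.
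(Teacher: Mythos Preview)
The paper does not actually prove this lemma: it defers entirely to~\cite[Thm 2.4]{mcswiggen2}. In McSwiggen's approach (as the paper itself indicates in the proof of Lemma~\ref{lem_DA_disk}), one constructs a \emph{global} semi-conjugacy $\pi \colon \T^2 \to \T^2$ between $g$ and $g_0$, homotopic to the identity and taking unstable leaves of $\FF^u_{\tu{DA}}$ injectively onto leaves of $\FF^u_{\tu{lin}}$; the circle maps $\pi_y$ are then simply the restrictions of $\pi$ to the horizontal circles, and the semi-conjugacy relation $\pi_y \circ h'_y = h_y \circ \pi_y$ is inherited from the leaf-preserving property. The density and wandering claims likewise follow from this global picture: $\pi_y$ collapses each $\bar I_y^i$ to a single point of $\ell_0 \cap C_y$, so $h'_y$ is a Denjoy-type circle map whose gap-intervals are the $I_y^i$, giving both density of the gaps and their pairwise disjoint orbits at once.

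Your route is genuinely different and essentially self-contained: you bypass the global two-dimensional construction and work directly on each $C_y$ via Poincar\'e's theorem. This is a perfectly viable strategy. One technical point worth tightening is the rotation-number step: interpolating $C^0$ line fields need not yield integrable foliations, so the homotopy of holonomies is not automatic. A cleaner argument is to observe that lifts $\widetilde{H}_y, \widetilde{H}'_y \colon \R \to \R$ differ by a uniformly bounded amount (since the lifted leaves of $\FF^u_{\tu{DA}}$ stay a bounded distance from those of $\FF^u_{\tu{lin}}$), hence $\rho(h'_y) = \rho(h_y)$ directly. Your identification of the strip configuration as the crux of the wandering step is exactly right, and your proposed resolution (each $\bar I_y^i$ has one endpoint on each of $W^u_{\pm 1}$, preserved separately by holonomy) is the correct geometric mechanism; in McSwiggen this is implicit in the global semi-conjugacy, whereas in your argument it has to be verified by hand from transversality in the lift.
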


Let us further observe the following. 

\begin{lem}\label{lem_DA_disk}
The domain $\Sigma$ is an unbounded disk which is dense in $\T^2$.
\end{lem}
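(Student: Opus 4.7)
The plan is to identify $\Sigma$ as the repulsion basin of the hyperbolic source $z_0$ of $g$ and transfer the question to the universal cover $p \colon \R^2 \ra \T^2$. I would let $\widetilde{z}_0 \in \R^2$ be the lift of $z_0$ at the origin, let $G \colon \R^2 \ra \R^2$ be the lift of $g$ fixing $\widetilde{z}_0$, and define $\widetilde{\Sigma} \subset \R^2$ to be the set of $\widetilde{z}$ with $G^{-n}(\widetilde{z}) \ra \widetilde{z}_0$. Since $z_0$ is a hyperbolic source, standard local theory furnishes a small closed disk $\bar{B} \ni \widetilde{z}_0$ with $G^{-1}(\bar{B}) \subset \Int(\bar{B})$; then $\widetilde{\Sigma} = \bigcup_{n \geq 0} G^n(\Int(\bar{B}))$ is a monotone increasing union of open topological disks in $\R^2$, so by Morton Brown's theorem on monotone unions $\widetilde{\Sigma}$ is homeomorphic to $\R^2$ and, in particular, open, connected and simply connected.

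Next I would show that $\widetilde{\Sigma}$ is a connected component of $p^{-1}(\Sigma)$ and that $\Sigma$ is trivial in the sense of Definition~\ref{defn_domains_types}. Since each deck translation $T_{p,q}$ commutes with $G$, the image $T_{p,q}(\widetilde{\Sigma})$ is the $G^{-1}$-basin of $T_{p,q}(\widetilde{z}_0)$. Moreover $\widetilde{z}_0$ is the only fixed point of $G$ inside $\widetilde{\Sigma}$, since any $G$-fixed $\widetilde{z} \in \widetilde{\Sigma}$ satisfies $\widetilde{z} = G^{-n}(\widetilde{z}) \ra \widetilde{z}_0$. Hence $T_{p,q}(\widetilde{\Sigma}) \cap \widetilde{\Sigma} = \emptyset$ for every $(p,q) \neq (0,0)$, so $\widetilde{\Sigma}$ is a lift of $\Sigma$ and $p|_{\widetilde{\Sigma}}$ is injective. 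Combined with the simply-connectedness of $\widetilde{\Sigma}$, Lemma~\ref{lem_prel_domains} then yields that $\Sigma$ is a disk in $\T^2$.

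Unboundedness will follow from the observation that $\widetilde{\Sigma}$ contains the strong unstable manifold $W^u(\widetilde{z}_0)$, which by Lemma~\ref{lem_DA_1} is everywhere nearly parallel to the eigenvector $v^u$ of $A$ and hence, since $v^u$ has irrational slope, unbounded in $\R^2$. Density in $\T^2$ is immediate from Lemma~\ref{lem_DA_semi}: since $\Sigma \cap C_y$ is dense in $C_y$ for every $y \in \T^1$ and the horizontal circles $\{C_y\}_{y \in \T^1}$ foliate $\T^2$, every open set of $\T^2$ contains an arc of some $C_y$ and therefore meets $\Sigma$.

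The main obstacle I anticipate is establishing simply-connectedness of $\widetilde{\Sigma}$; once Brown's theorem is applied to the monotone exhaustion $G^n(\Int(\bar{B}))$, the remaining pieces (triviality, unboundedness, density) reduce quickly to Lemma~\ref{lem_prel_domains}, Lemma~\ref{lem_DA_1} and Lemma~\ref{lem_DA_semi}.
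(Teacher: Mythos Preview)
Your argument is correct and follows a genuinely different route from the paper. The paper establishes simply-connectedness of $\widetilde{\Sigma}$ by exploiting the explicit product structure coming from the DA construction: since $\widetilde{\Sigma}$ is foliated by the lifted unstable leaves through the cross-section $\widetilde{J}$, it is homeomorphic to $(0,1) \times \R$. Triviality is then obtained by a separate contradiction argument: an essential simple closed curve in $\Sigma$ would lift to a curve of rational slope, which would have to intersect the lifted boundary leaf $\widetilde{W}^u_{1}$ (whose slope is irrational), contradicting $\gamma \subset \Sigma$. Your approach instead treats $\widetilde{\Sigma}$ purely as the repulsion basin of a hyperbolic source of the lift $G$: Brown's monotone-union theorem gives simply-connectedness directly, and the observation that basins of distinct lifted sources $T_{p,q}(\widetilde{z}_0)$ are disjoint yields injectivity of $p|_{\widetilde{\Sigma}}$ (and hence triviality via Lemma~\ref{lem_prel_domains}) in one stroke. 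Your route is more economical and uses only the fact that $z_0$ is a source, not the particular foliated structure of the DA map; the paper's route is more hands-on and makes the $(0,1) \times \R$ structure of $\Sigma$ explicit, which is consonant with how $\Sigma$ is used in the subsequent construction. For unboundedness (via a single unstable leaf of irrational direction) and for density (via Lemma~\ref{lem_DA_semi}) the two arguments essentially coincide.
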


\begin{proof}
By Lemma~\ref{lem_DA_semi}, $\Sigma \cap C_y$ is dense in $C_y$ for every $y \in \T^2$, thus $\Sigma \subset \T^2$ is dense in $\T^2$ as $\bigcup_{y \in \T^1} C_y = \T^2$. 
To show that $\Sigma$ is an unbounded disk, we first recall the following. By~\cite[Thm 2.4]{mcswiggen2}, $g$ is semi-conjugate to $g_0$, through a semi-conjugacy which is homotopic to the identify, taking unstable leaves of 
$\FF^u_{\tu{DA}}$ injectively onto unstable leaves of $\FF^u_{\tu{lin}}$. Consequently, the lifts of the leaves of $\FF^u_{\tu{DA}}$ are homotopic (and thus uniformly close), through the semi-conjugacy between $g$ and $g_0$, to the corresponding lifts of the leaves 
of $\FF^u_{\tu{lin}}$. As the latter leaves are clearly unbounded in the lift, so are the lifts of the leaves of $\FF^u_{\tu{DA}}$ and, in particular, $\widetilde{\Sigma}$ is unbounded, where $\widetilde{\Sigma}$ is a lift of $\Sigma$ to the cover $\R^2$.
Further, as $\widetilde{\Sigma}$ is foliated by leaves homeomorphic to $\R$ passing through the cross-section $\widetilde{J} \subset \widetilde{\Sigma}$, where $\widetilde{J}$ is a lift of the open interval $J \subset \Sigma$ as defined above, 
it is homeomorphic to $(0,1) \times \R$ and thus simply connected. 

To show that $\Sigma$ is a disk, we need to show that $\Sigma$ contains no essential simple closed curves. To derive a contradiction, suppose there exists an essential simple closed curve $\gamma \subset \Sigma$.
As $\gamma$ lifts to a curve $\widetilde{\gamma}$ which is homotopic to a curve of type $(p,q)$, where $p, q \in \Z$, $\widetilde{\gamma}$ is homotopic, and thus uniformly close, to a straight line with rational slope. However, as the unstable leaves $W^u_{\pm 1}$
have lifts that are homotopic to a leaf of $\FF^u_{\tu{lin}}$ whose slope is irrational, this implies that $\widetilde{\gamma}$ has to intersect a lift of $W^u_{1}$ implying that $\gamma \cap W^u_{1} \neq \emptyset$, 
a contradiction. Thus $\Sigma$ is an unbounded disk indeed.
\end{proof}

Let us now proceed to the construction of our example.

\begin{proof}[Proof of Example~\ref{ex_unbdd_disk}]
As $C_{y} \cap \Sigma$ is a dense union of disjoint intervals, we have that $\QQ_y = C_y \setminus (C_{y} \cap \Sigma)$, being the circle minus a dense union of disjoint intervals, is a Cantor set. Note that the endpoints $\QQ_{y, \tu{rat}}$ of $\QQ_y$ are exactly the set of points $C_y \cap \partial \Sigma$ and that $\partial \Sigma = \bigcup_{y \in \T^1} \QQ_{y, \tu{rat}}$. The complement in $\QQ_y$ of these endpoints are the irrational points, i.e. $\QQ_{y, \tu{irr}} = \QQ_y \setminus \QQ_{y, \tu{rat}}$. Further, as the slope of $v^u$ is irrational, there exists a suitable $\nu \in \R$ such that $\nu v^u = (\alpha, \beta)$ with $1, \alpha, \beta$ rationally independent. For example, if we take $g_0 \colon \T^2 \ra \T^2$ to be Arnold's cat map, where $v^u = (1, \frac{1 + \sqrt{5}}{2})$, we may choose $\nu = e$ so that $(\alpha, \beta)$ is an irrational vector as 
$1, \sqrt{5}, e$ are rationally independent. Let $\tau := \tau_{\alpha, \beta}$ be the corresponding irrational translation of the torus where, by construction, $\tau(\ell_0) = \ell_0$.

Choose compatible orientations on the foliations $\FF^u_{\tu{DA}}$ and $\FF^u_{\tu{lin}}$. Given $y \in \T^1$, let $h_y , h'_y \colon C_y \ra C_y$ be the holonomy homeomorphisms, i.e. the return map to $C_y$ of the unstable foliation of the linear and perturbed system respectively. 
For every $y \in \T^1$, there exists a semi-conjugacy $\pi_y \colon C_y \ra C_y$ such that $\pi_y \circ h'_y = h_y \circ \pi_y$, by Lemma~\ref{lem_DA_semi}. As the continuous foliations $\FF^u_{\tu{DA}}$ and $\FF^u_{\tu{lin}}$ are 
everywhere transversal to the horizontal foliation, the holonomy homeomorphisms $h'_y$, and consequently the maps $\pi_y$, depend continuously on $y \in \T^1$. In other words, as $\T^2 = \bigcup_{y \in \T^1} C_{y}$, this defines a continuous 
\begin{equation}
\pi \colon \T^2 \ra \T^2, \quad \pi(x,y) = ( \pi_y(x), y ),
\end{equation} 
that has the property that $\pi(\Sigma \cup W^u_{-1} \cup W^u_{1}) = \ell_0$, as $\pi_y ( \bar{I}_y^i ) \in C_y \cap \ell_0$, by Lemma~\ref{lem_diff_est} for every $i \in \Z$ and $y \in \T^1$. For every $y \in \T^1$, there exists a homeomorphism $f_y \colon C_y \ra C_{r_{\beta}(y)}$, defined by mapping the point $z \in C_y$ to the first intersection point of the unique leaf of $\FF^u_{\tu{DA}}$ through $z$ with $C_{r_{\beta}(y)}$ (along the positive direction of the leaf), where $r_{\beta} \colon \T^1 \ra \T^1$ is the irrational rotation with rotation number $\rho(r) = \beta \mod \Z$, see Figure~\ref{fig_unbdd_disk}.

\begin{figure}[h]
\begin{center}
\psfrag{Sigma}{$\Sigma$}
\psfrag{C_y}{$C_y$}
\psfrag{C_y'}{$C_{r_{\beta}(y)}$}
\psfrag{z}{$z$}
\psfrag{z'}{$z'$}
\includegraphics[scale=0.55]{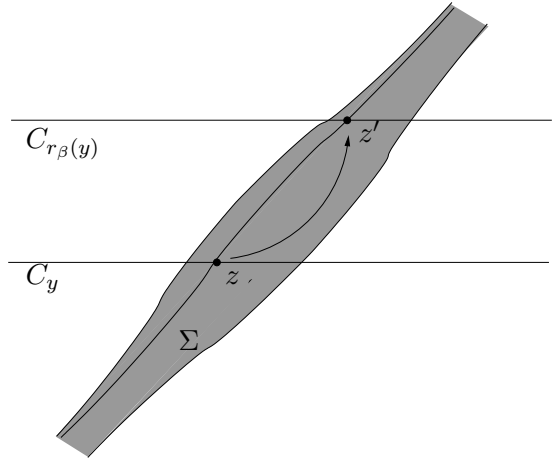}
\caption[The action of $f_y$]{$f_y$ maps the point $z \in C_y$ along a leaf of the foliation $\FF^u_{\tu{DA}}$ to a point $z' = f_y(z) \in C_{r_{\beta}(y)}$.}
\label{fig_unbdd_disk}
\end{center}
\end{figure}

As $\FF^u_{\tu{DA}}$ is a foliation which is transversal to $\FF^u_{\tu{hor}}$, $f_y$ is one-to-one. Further, as $\FF^u_{\tu{DA}}$ is continuous, $f_y$ is continuous as well and thus $f_y$ is a homeomorphism, for every $y \in \T^1$. Further, it follows from 
the definitions that $\pi_{r_{\beta}(y)} \circ f_y = \tau \circ \pi_y$. Define
\begin{equation}\label{eq_type_I_f}
f \colon \T^2 \ra \T^2, \quad f(x,y) = ( f_y(x) , r_{\beta}(y) ),
\end{equation}
As the maps $f_y$ are homeomorphisms for every $y \in \T^1$ and depend continuously on $y \in \T^1$ (by virtue again of the foliation $\FF_{\tu{DA}}^u$ being tranversal and continuous), it holds that $f$ as defined by~\eqref{eq_type_I_f} is a homeomorphism. 
It is clear that this $f$ is isotopic to the identity, $\pi$ is homotopic to the identity and, by construction, $\pi \circ f = \tau \circ \pi$ and $f(\Sigma) = \Sigma$ with $\Sigma$ the unbounded disk (by Lemma~\ref{lem_DA_disk}) bounded by the leaves $W^u_{\pm 1} \in \FF_{\tu{DA}}^u$, 
as $\tau(\ell_0) = \ell_0$. Let $\MM$ be the minimal set of $f$. As $\pi$ is one-to-one, except on $\Sigma \cup W^u_{-1} \cup W^u_{1}$ (again by Lemma~\ref{lem_diff_est}), it holds that 
\begin{equation}\label{eq_type_I_regular_1}
\RR_{\pi} = \pi^{-1}( \T^2 \setminus \ell_0 ) = \T^2 \setminus (\Sigma \cup W^u_{-1} \cup W^u_{1} ) = \bigcup_{y \in \T^1} \QQ_{y, \tu{irr}}.
\end{equation}
As $\QQ_y = \Cl(\QQ_{y, \tu{irr}})$, combining~\eqref{eq_type_I_regular_1} with Lemma~\ref{lem_semi_conjugation}, it follows that
\begin{equation}
\MM  = \Cl ( \bigcup_{y \in \T^1} \QQ_{y, \tu{irr}} ) = \bigcup_{y \in \T^1} \Cl \left( \QQ_{y, \tu{irr}} \right) 
= \bigcup_{y \in \T^1} \QQ_y = \T^2 \setminus \Sigma,
\end{equation}
where $\Cl ( \bigcup_{y \in \T^1} \QQ_{y, \tu{irr}} ) = \bigcup_{y \in \T^1} \Cl \left( \QQ_{y, \tu{irr}} \right)$ holds as the Cantor sets 
$\QQ_y$ (and therefore their irrational parts $\QQ_{y, \tu{irr}}$) depend continuously on $y \in \T^1$. Thus $\MM = \T^2 \setminus \Sigma$, with $\Sigma$ an unbounded and $f$-invariant disk, and $f \in \homeo_{\#}(\T^2)$, as required. This finishes the proof. 
\end{proof}

\subsection{Examples of type II}

Let us next give examples of homeomorphisms for which the connected components $\{ \Sigma_k \}$ of the complement of $\MM$ are essential annuli 
and disks.

\begin{example}[Type II : essential annuli and disks]\label{ex_cylinders}
There exist $f \in \homeo_*(\T^2)$ with minimal set $\MM$ of the form $\MM = \T^2 \setminus \bigcup_{n \in \Z} \Sigma_k$ 
with $\{ \Sigma_k \}_{k \in \Z}$ a collection of essential annuli and disks. Furthermore, the essential annuli can be constructed 
to have any characteristic $(p,q)$, where $\gcd(p,q) = 1$.
\end{example}

The proof of Example~\ref{ex_cylinders} uses the following. 

\begin{lem}\label{lem_anosov_translation}
Let $f \in \homeo_*(\T^2)$ and $f' = L_A^{-1} \circ f \circ L_A$ with $L_A \colon \T^2 \ra \T^2$ induced by a linear $A \in \SL(2,\Z)$.
Then $f' \in \homeo_*(\T^2)$.
\end{lem}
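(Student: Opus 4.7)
The plan is to verify the three defining properties of $\homeo_*(\T^2)$ for the conjugate $f'$, namely (a) $f' \in \homeo_0(\T^2)$, (b) $\rho(f')$ is a single point, and (c) that point is an irrational vector in the sense of~\eqref{eq_numbers_condition}.

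For (a), since $L_A$ is a homeomorphism of $\T^2$ and conjugation preserves the homeomorphism property, $f'$ is a homeomorphism. Moreover, if $f_t$ is an isotopy from the identity to $f$, then $L_A^{-1} \circ f_t \circ L_A$ is an isotopy from the identity to $f'$, hence $f' \in \homeo_0(\T^2)$.

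For (b) and (c), the key computation is the behaviour of the rotation set under linear conjugacy. Let $F$ be a lift of $f$ to $\R^2$. Since $A \in \SL(2,\Z)$ projects to $L_A$, the map $F' := A^{-1} \circ F \circ A$ is a lift of $f'$. For any $\widetilde{z} \in \R^2$ and $n \geq 1$ I would compute
\begin{equation*}
\frac{(F')^n(\widetilde{z}) - \widetilde{z}}{n} \;=\; \frac{A^{-1}F^n(A\widetilde{z}) - A^{-1}(A\widetilde{z})}{n} \;=\; A^{-1}\!\left(\frac{F^n(A\widetilde{z}) - A\widetilde{z}}{n}\right).
\end{equation*}
Because $A^{-1}$ is a continuous linear map and $\widetilde{z} \mapsto A\widetilde{z}$ is a bijection of $\R^2$, taking closures of clusters of subsequences commutes with $A^{-1}$. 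Thus $\rho(F') = A^{-1}\rho(F)$, and consequently $\rho(f') = A^{-1}\rho(f) \mod \Z^2$ is a single point $(\alpha',\beta') = A^{-1}(\alpha,\beta) \mod \Z^2$.

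Finally, for (c), I would check that $1,\alpha',\beta'$ remain rationally independent. Writing $A^{-1} = \begin{pmatrix} a & b \\ c & d\end{pmatrix}$ with integer entries and $\det A^{-1} = \pm 1$, a hypothetical relation $N_1 + N_2\alpha' + N_3\beta' = 0$ with integer $N_i$ becomes $N_1 + (aN_2 + cN_3)\alpha + (bN_2 + dN_3)\beta = 0$, so the rational independence of $1,\alpha,\beta$ forces $N_1 = 0$ and $\begin{pmatrix} a & c \\ b & d \end{pmatrix}\begin{pmatrix}N_2\\ N_3\end{pmatrix} = 0$; invertibility of $A^{-1}$ (and hence of its transpose) over $\Z$ gives $N_2 = N_3 = 0$. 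Hence $(\alpha',\beta')$ is an irrational vector and $f' \in \homeo_*(\T^2)$. Nothing in this argument looks subtle; the only point requiring some care is the verification that the rotation set transforms cleanly as $\rho(F') = A^{-1}\rho(F)$, which relies on $A^{-1}$ being a bicontinuous bijection and on the base-point independence built into the definition of $\rho(F)$.
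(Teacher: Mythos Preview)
Your proof is correct and follows essentially the same approach as the paper: verify $f' \in \homeo_0(\T^2)$, show $\rho(F') = A^{-1}\rho(F)$, and then check that rational independence is preserved under the invertible integer linear map. The only cosmetic differences are that the paper cites \cite[Lemma 2.4]{kocsard} for the rotation-set identity whereas you compute it directly, and the paper eliminates $N_2, N_3$ by an explicit Cramer-type step while you invoke invertibility of the transpose of $A^{-1}$.
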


\begin{proof}
Let $L_A$ be a linear conjugation induced by an element $A \in \SL(2,\Z)$. As $f \in \homeo_0(\T^2)$, $f' \in \homeo_0(\T^2)$ as well. 
Let $F, F'$ a lift of $f, f'$ respectively. By~\cite[Lemma 2.4]{kocsard}, we have that 
\[ \rho(L_A^{-1} \circ F \circ L_A) = L_A^{-1} \rho(F)\mod \Z^2. \] 
Therefore, $\rho(f') = (\alpha', \beta')\mod \Z^2$, where 
\[ \alpha' = a \alpha + b \beta ~\textup{and}~\beta' = c \alpha + d \beta, \] 
with $a,b,c,d \in \Z$. The condition $N_1 + N_2 \alpha' + N_3 \beta' = 0$ implies that $N_1 = N_2 a + N_3 c = N_2b + N_3 d = 0$,
as $1, \alpha, \beta$ are rationally independent. Multiplying $N_2 a + N_3 c$ by $b$ and $N_2b + N_3 d $ by $a$ and subtracting yields that $N_2 ( ad -bc ) =0$, which yields that $N_2 = 0$ as $A \in \SL(2, \Z)$ and thus $ad-bc=1$. Similarly, it holds that $N_3=0$ and it thus follows that 
$1, \alpha', \beta'$ are rationally independent as well. Therefore, $f' \in \homeo_*(\T^2)$.
\end{proof}

\begin{proof}[Proof of Example~\ref{ex_cylinders}]
First, we construct a homeomorphism $f \in \homeo_{\#}(\T^2)$ for which the complement of $\MM$ is a collection of essential annuli of a given characteristic. Let $(p,q)$ with $\gcd(p,q) = 1$ be given. Let $f \in \homeo_{\#}(\T^2)$ be a product of a Denjoy counterexample $\varphi \in \homeo(\T^1)$ with irrational rotation number $\alpha \notin \Q$ and an irrational rotation $r_{\beta}$, with $\beta \notin \Q$ chosen so 
that $1, \alpha, \beta$ are rationally independent. The corresponding semi-conjugacy $\pi$ is of the form $\pi = ( \pi_1, \Id)$, where $\pi_1$ 
is the semi-conjugacy of $\varphi$ to the irrational rotation $r_{\alpha}$. As $\RR_{\pi} = \QQ_{\tu{irr}} \times \T^1$, the minimal set $\MM$ 
of $f$ is $\MM = \Cl(\QQ_{\tu{irr}} \times \T^1) = \QQ \times \T^1$, where $\QQ \subset \T^1$ is the Cantor minimal set of $\varphi$. The characteristic of the corresponding essential annuli is $(0,1)$. For later reference, denote $\{ \Sigma^a_t\}$ the collection of essential annuli in the complement of $\MM$. Given any pair $(p,q) \in \Z^2$ such that $\gcd(p,q) = 1$, there exists an element $A \in \SL(2,\Z)$ such that the (linear) $L_A \in \homeo(\T^2)$ induced by $A$ has the property that an essential simple closed curve of characteristic $(0,1)$ is mapped to an essential simple closed curve of characteristic $(p,q)$. By Lemma~\ref{lem_anosov_translation}, conjugating $f$ with $L_A$ gives a homeomorphism $f' \in \homeo_*(\T^2)$ and the components of the complement of the minimal set $\MM' = L_A^{-1}(\MM)$ now consists of essential annuli of characteristic $(p,q)$.

To obtain an example of a homeomorphism $f' \in \homeo_*(\T^2)$ with a minimal set $\MM'$ for which the complementary components contain both essential annuli and disks, we modify the above example as follows. Let $f$ again be the product homeomorphism given above. Choose $z_0 \in \RR_{\pi}$ and blow up the orbit $\OO_f(z_0)$ to disks by the procedure in section~\ref{subsec_constr}. This gives a homeomorphism $f' \in \homeo_{\#}(\T^2)$ and a continuous $\phi \colon \T^2 \ra \T^2$ such that $\phi \circ f' = f \circ \phi$ and we define $\pi' = \pi \circ \phi$.
We have that $\phi^{-1}(f^k(z_0)) = \Cl(\Sigma_k)$, where $\Sigma_k$ is the interior of the closed disk $\Cl(\Sigma_k)$ and $\gamma_k = \partial \Sigma_k$ a simple closed curve. Denote $\MM'$ the corresponding minimal set of $f'$. 

In order to show that the complement of $\MM'$ consists of essential annuli and disks, it suffices to show that $\gamma_k \subset \MM'$.
Indeed, as $\phi$ is one-to-one on $\T^2 \setminus \bigcup_{k \in \Z} \Cl(\Sigma_k)$, $\phi^{-1}(\Sigma^a_t)$ is again an essential annulus, 
for every $t \in \Z$, where $\phi^{-1}(\Sigma^a_t) \cap \MM' = \emptyset$. Thus to show that $\gamma_k \subset \MM'$, for every $k \in \Z$,
by Lemma~\ref{lem_struct_min_sets} (2), it suffices to show that for every $0 < r \leq 1$ and every $\theta_1, \theta_2 \in [0,2\pi)$, 
with $0< | \theta_1 - \theta_2| < \pi$, we have that $\WW(r, \theta_1, \theta_2) \cap (\RR_{\pi} \setminus \OO_f(z_0)) \neq \emptyset$, 
where we recall that the wedge $\WW(r, \theta_1, \theta_2) \subset B_{r\delta_0} \setminus \{ z_0\}$ is the region bounded by two leaves $\rho_{\theta_1}, \rho_{\theta_2} \in \FF_0$, 
where $0 < | \theta_1 - \theta_2 | < \pi$ and $0 < r \leq 1$ (see again section~\ref{subsec_constr}).

This is proved as follows: as $z_0 \in \RR_{\pi} = \QQ_{\tu{irr}} \times \T^1$, through every wedge $\WW(r, \theta_1, \theta_2)$ pass infinitely 
many vertical simple closed curves (i.e. the connected components of $\QQ_{\tu{irr}} \times \T^1$), arbitrarily close to $z_0$. 
As only countably many of these points are deleted from these curves, every wedge $\WW(r, \theta_1, \theta_2)$ contains points of $\RR_{\pi} \setminus \OO_f(z_0)$, for any $r>0$. Therefore, $\gamma_k \subset \MM'$ for every $k \in \Z$ indeed, where $\MM' = \Cl(\RR_{\pi'})$ 
by Lemma~\ref{lem_struct_min_sets} (1). This finishes the proof.
\end{proof}

\subsection{Examples of type III}\label{subsec_ex_III}

The most simple example of an extension of a Cantor set is of course a Cantor set itself. A homeomorphism $f \in \homeo_*(\T^2)$ admitting
such a Cantor minimal set is obtained by taking a product of two Denjoy-counterexamples with rationally independent rotation numbers. Its minimal set is the product of the Cantor minimal sets of its factors, and thus itself a Cantor set. Recall that an extension of a Cantor set $\MM$ is said to be non-trivial if not all connected components of $\MM$ are singletons. Below we give examples of non-trivial extensions of a Cantor set. Recently, F. B\'eguin, S. Crovisier, T. J\"ager and F. le Roux in~\cite[Thm 1.2]{beguin} also constructed an example of a homeomorphism $f \in \homeo_*(\T^2)$ for which the minimal set $\MM$ is a non-trivial extension of a Cantor set (in our terminology). This homeomorphism is constructed by adapting a quasiperiodically forced circle homeomorphism (see~\cite[Thm 1.2]{beguin}, cf. Counterexample~\ref{CA_1}) with a Cantor minimal set, and blowing up an orbit of points to arcs, using a construction due to M. Rees~\cite{rees_1, rees_2}. The minimal set thus constructed has a 
countable number of arcs among its connected components. The examples below show that in our class $\homeo_*(\T^2)$ there exist minimal sets,
which are non-trivial extensions of Cantor sets, which have separating connected components among its connected components. 

\begin{example}[Type III : extensions of Cantor sets]\label{ex_type_III}
There exist $f \in \homeo_*(\T^2)$ for which $\MM$ is a non-trivial extension of a Cantor set for which the non-degenerate components
are simple closed curves. 
\end{example}

\begin{rem}
In a way similar to~\cite[Thm 3]{bis_1}, cf. Example~\ref{ex_quasi_sierp}, by defining a suitable family of arcs in the disks enclosed by 
the non-degenerate components of the above minimal set, if we pass to the quotient by collapsing these arcs to points, this gives new quotient homeomorphisms of type III for which the corresponding minimal set is a again a non-trivial extension of a Cantor set, possible connected 
components of which include flowers and dendrites. 
\end{rem}

The proof of the above example needs two further lemmas. Let $\QQ \subset \T^1$ be a Cantor set and denote $\{ I_k \}_{k \in \Z}$ the collection of the connected components of $\T^1 \setminus \QQ$. In what follows, we denote $|I|$ the length (relative to the Haar measure on the circle) of an interval $I \subset \T^1$ and denote $\widetilde{d}_1$ the Euclidean metric on $\R$.

\begin{lem}\label{lem_cantor_set_perturb} 
There exist Cantor sets $\QQ \subset \T^1$ with the following property: there exists a point $x_0 \in \QQ_{\tu{irr}}$, an interval $J \subset \T^1$,
with $\partial J \subset \QQ_{\tu{irr}}$ and $x_0$ as midpoint of $J$, and a constant $C>0$, such that for every interval 
$I_k \subset J \setminus \QQ$, $k \geq 1$, it holds that $|I_k| \leq C (d(x_0, x_k))^2$, where $x_k$ is the midpoint of the interval $I_k$.
\end{lem}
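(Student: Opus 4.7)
The plan is to construct $\QQ$ as a Denjoy-type Cantor set obtained by inserting gaps of prescribed lengths at the points of a countable dense orbit of an irrational rotation on $\T^1$, with the gap lengths tailored so as to decay at least as fast as the squared distance from a prescribed base point.

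Fix $\alpha \in \R \setminus \Q$ and pick $\hat{x}_0 \in \T^1$ not belonging to the orbit $\{y_n := n\alpha \mod 1\}_{n \in \Z}$, so that $d_n := d(\hat{x}_0, y_n) > 0$ for every $n \in \Z$. I take as gap lengths
$$\ell_n := \min\{d_n^2,\, 2^{-|n|}\},$$
which are strictly positive, satisfy $\ell_n \leq d_n^2$, and sum to $S := \sum_n \ell_n \leq 3$. By the classical construction underlying the Denjoy examples recalled in the introduction, there is a monotone surjection $\phi \colon \T^1 \to \T^1$ that is one-to-one off a countable set and that inserts at each $y_n$ an open interval $I_n$ of length $|I_n| = \ell_n/(1+S)$; the complement $\QQ := \T^1 \setminus \bigcup_n I_n$ is a Cantor set, and $x_0 := \phi(\hat{x}_0)$ lies in $\QQ_{\tu{irr}}$ because $\hat{x}_0$ is not in the orbit.

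The key estimate compares arc-length distances before and after the blow-up. Each of the two arcs of $\T^1$ from $x_0$ to the midpoint $x_n$ of $I_n$ corresponds, after rescaling by $1/(1+S)$, to the analogous arc from $\hat{x}_0$ to $y_n$ in the original $\T^1$ together with all inserted intervals strictly between them (and half of the one at $y_n$); inserting intervals only adds length, so
$$d(x_0, x_n) \geq \frac{d(\hat{x}_0, y_n)}{1+S} = \frac{d_n}{1+S}.$$
Combined with $\ell_n \leq d_n^2$ this yields
$$|I_n| \;=\; \frac{\ell_n}{1+S} \;\leq\; \frac{d_n^2}{1+S} \;\leq\; (1+S)\, d(x_0, x_n)^2,$$
so the bound holds with $C := 1+S$, in fact uniformly in $n$ and not merely for those $I_n$ contained in $J$.

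It remains to select $J$. Since $\QQ$ is perfect, the set of $r > 0$ with both $x_0 - r$ and $x_0 + r$ in $\QQ$ accumulates at $0$; deleting the at most countably many $r$ for which one of these endpoints happens to lie in $\QQ_{\tu{rat}}$ still leaves arbitrarily small admissible $r$, and any one of them gives an interval $J = (x_0 - r, x_0 + r)$ with the required property. The principal delicacy in the whole argument is the balancing of gap lengths: because the orbit $\{y_n\}$ is equidistributed, the distances $d_n$ do not tend to zero and the pointwise bound $\ell_n \leq d_n^2$ is by itself not summable, so one cannot directly take $\ell_n = d_n^2$; cutting against the summable sequence $2^{-|n|}$ restores summability without sacrificing the quadratic estimate, which is what makes the Denjoy construction produce a bona fide Cantor set.
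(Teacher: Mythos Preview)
Your main construction and the key distance estimate $d(x_0,x_n) \geq d_n/(1+S)$ are correct, and the route differs from the paper's. The paper proceeds by direct interval-deletion: starting from $[-1,1]$ with midpoint $0$, it removes at each step $t$ an interval centred at some chosen $x_t \neq 0$ of length at most $\widetilde{d}_1(0,x_t)^2$, obtaining a model Cantor set $\QQ' \subset [-1,1]$ with the quadratic bound built in, and then splices a rescaled copy of $\QQ'$ into a subinterval $J$ (chosen with $\partial J \subset \QQ_{\tu{irr}}$) of an arbitrary Cantor set in $\T^1$. Your Denjoy blow-up is a more global construction and is natural for the downstream application where $\QQ$ must arise as the minimal set of a Denjoy counterexample; the paper sidesteps this by invoking the fact that every Cantor set in $\T^1$ arises that way. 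The paper's approach is more elementary but its local splicing already hands you the interval $J$ for free, whereas your global construction forces you to locate $J$ afterwards.

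That last step is where there is a genuine gap. The assertion ``since $\QQ$ is perfect, the set of $r > 0$ with both $x_0 - r$ and $x_0 + r$ in $\QQ$ accumulates at $0$'' is false for general Cantor sets: one can build a perfect set in which the portions immediately to the left and right of $x_0$ are arranged so that their reflections about $x_0$ meet only at $0$. For your specific $\QQ$ the desired $r$ does exist, but not for the reason you give. The simplest repair is to scale the gap lengths by a small constant so that $S < 1$; then $|\QQ| = 1/(1+S) > 1/2$, and the two sets $\{r \in (0,\tfrac12) : x_0 + r \in \QQ\}$ and $\{r \in (0,\tfrac12) : x_0 - r \in \QQ\}$ have combined Lebesgue measure $|\QQ| > \tfrac12$, hence must intersect in a set of positive measure; discarding the countably many $r$ that land in $\QQ_{\tu{rat}}$ still leaves admissible choices. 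Alternatively, one can check that the quadratic control on $\ell_n$ forces $x_0$ to be a Lebesgue density point of $\QQ$, which yields the same conclusion locally.
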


\begin{proof}
First, let $[-1,1] \subset \R$ with midpoint $0 \in [-1,1]$. Inductively delete intervals from $[-1,1]$: at each step
$t \geq 1$, choose a point $x_t \in (-1,1) \setminus \bigcup_{s=0}^{t-1} I'_s$, with $x_t \neq 0$, and delete an interval 
$I'_t \subset (-1,1) \setminus \bigcup_{s=0}^{t-1} I'_s$, centered at $x_t$ and not overlapping $0$, of length at most $(\widetilde{d}_1(0,x_t))^2$. 
Repeating this ad infinitum produces a Cantor set $\QQ' \subset [-1,1]$. Given a Cantor set $\QQ \subset \T^1$, take a small (closed) interval $J \subset \T^1$ for which $\partial J \subset \QQ_{\tu{irr}}$. Replacing $J \cap \QQ \subset \T^1$ with a rescaled copy of $\QQ'$ into $J$ yields the desired Cantor set in $\T^1$, with $C = |J|/2$.
\end{proof}

\begin{lem}\label{lem_ex_III_cantor_blowup}
Let $f \in \homeo_{\#}(\T^2)$ be the product of two Denjoy counterexamples $\varphi, \psi \in \homeo(\T^1)$, semi-conjugate to an irrational translation $\tau$ through $\pi$. Let $\QQ_1, \QQ_2 \subset \T^1$ be two Cantor minimal sets of $\varphi$ and $\psi$ respectively, with $x_0 \in \QQ_{1, \tu{irr}}$ and $y_0 \in \QQ_{2, \tu{irr}}$ points and corresponding intervals $J_1$ and $J_2$ satisfying the 
conditions of Lemma~\ref{lem_cantor_set_perturb}. Set $z_0 :=(x_0, y_0) \in \RR_{\pi}$. For every $0 < r \leq 1$ and 
$\theta_1, \theta_2 \in [0,2\pi)$, with $0< | \theta_1 - \theta_2| < \pi$, we have that 
$\WW(r, \theta_1, \theta_2) \cap (\RR_{\pi} \setminus \OO_f(z_0)) \neq \emptyset$.
\end{lem}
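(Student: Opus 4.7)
The strategy is to identify $\RR_\pi$ explicitly and then exhibit uncountably many points of $\RR_\pi$ inside any wedge based at $z_0$; since $\OO_f(z_0)$ is countable, this is stronger than what is required. Because $\pi$ decomposes as the product $\pi = (\pi_1, \pi_2)$ of the semi-conjugacies of $\varphi, \psi$ with their respective irrational rotation factors, a fibre $\pi^{-1}(\pi(x,y))$ is a singleton if and only if both $x \in \QQ_{1,\tu{irr}}$ and $y \in \QQ_{2,\tu{irr}}$. Hence $\RR_\pi = \QQ_{1,\tu{irr}} \times \QQ_{2,\tu{irr}}$, and the problem reduces to placing uncountably many points of $\QQ_{1,\tu{irr}} \times \QQ_{2,\tu{irr}}$ inside the open wedge $\WW = \WW(r, \theta_1, \theta_2)$. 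I would split the analysis into two cases according to whether the open wedge contains one of the four coordinate directions $0, \pi/2, \pi, 3\pi/2$ from $z_0$.

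In the \emph{easy case}, suppose by symmetry that $\pi/2$ belongs to the wedge's angular range. Then for some $\rho > 0$ the vertical segment $\{x_0\} \times (y_0, y_0 + \rho)$ lies in $\WW$. Since $y_0 \in \QQ_{2,\tu{irr}}$ is neither isolated nor a gap-endpoint of $\QQ_2$, it is a two-sided accumulation point of this perfect set, so $(y_0, y_0 + \rho) \cap \QQ_2$ is uncountable; removing the countable subset $\QQ_{2,\tu{rat}}$ still leaves uncountably many $y \in \QQ_{2,\tu{irr}}$ in the interval. For each such $y$, combined with $x_0 \in \QQ_{1,\tu{irr}}$, we have $(x_0, y) \in \WW \cap \RR_\pi$.

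In the \emph{hard case}, no coordinate direction lies in the open wedge, so the wedge's interior, being connected and disjoint from the four coordinate rays through $z_0$, is contained in a single open quadrant; say without loss of generality $\cos\theta, \sin\theta > 0$ throughout $(\theta_1, \theta_2)$. I would fix $\theta^* \in (\theta_1, \theta_2)$ strictly interior and extract from Lemma~\ref{lem_cantor_set_perturb} the following key consequence: for every $x \in J_1$ sufficiently close to $x_0$, there exists $x' \in \QQ_{1,\tu{irr}}$ with $|x - x'| \leq C_1' (x - x_0)^2$. Indeed, if $x \notin \QQ_1$ then $x$ lies in some gap $I_k$ of length $|I_k| \leq C(x_k - x_0)^2$, and a short computation gives $|I_k| \leq 4C(x - x_0)^2$; snap $x$ to the nearer endpoint in $\QQ_{1,\tu{rat}}$ and then perturb within the dense subset $\QQ_{1,\tu{irr}} \subset \QQ_1$. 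The analogous statement holds for $\QQ_{2,\tu{irr}}$ near $y_0$. Then for each sufficiently small $\epsilon > 0$ I would set $p_\epsilon = z_0 + \epsilon(\cos\theta^*, \sin\theta^*) \in \WW$ and select $x'_\epsilon \in \QQ_{1,\tu{irr}}$, $y'_\epsilon \in \QQ_{2,\tu{irr}}$ at distance $O(\epsilon^2)$ from the respective coordinates of $p_\epsilon$. The vector $(x'_\epsilon - x_0, y'_\epsilon - y_0)$ then has magnitude $\epsilon + O(\epsilon^2)$ and angle $\theta^* + O(\epsilon)$, so $(x'_\epsilon, y'_\epsilon)$ lies in $\WW \cap \RR_\pi$ for all sufficiently small $\epsilon$, and varying $\epsilon$ produces uncountably many such points.

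In either case $\WW \cap \RR_\pi$ is uncountable while $\OO_f(z_0)$ is countable, so $\WW \cap (\RR_\pi \setminus \OO_f(z_0)) \neq \emptyset$. The real obstacle is the hard case: one must verify that the $O(\epsilon^2)$ positional correction required to land on $\QQ_{1,\tu{irr}} \times \QQ_{2,\tu{irr}}$ only induces an $O(\epsilon)$ angular error away from $\theta^*$, so that the corrected point does not fall outside the wedge. This is precisely where the \emph{quadratic} decay in Lemma~\ref{lem_cantor_set_perturb} is essential -- any slower decay of the gap sizes would give an angular error that fails to tend to zero with $\epsilon$, and the construction would break down for narrow wedges.
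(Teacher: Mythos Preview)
Your proof is correct and follows essentially the same route as the paper's: both dispose of the coordinate-direction case directly and, in the remaining case, play the quadratic gap decay of Lemma~\ref{lem_cantor_set_perturb} against the linear width of the wedge---the paper by showing the gap-rectangle $I_{1,k}\times I_{2,t}$ containing a point on the bisector has closure in $\WW$, you by the equivalent $O(\epsilon)$ angular-error estimate for an $O(\epsilon^2)$ coordinate correction. One minor point: the claim that ``varying $\epsilon$ produces uncountably many such points'' is not literally justified, since the snapping procedure may collapse an interval of $\epsilon$'s to a single point of $\QQ_{1,\tu{irr}}\times\QQ_{2,\tu{irr}}$; the clean fix (which the paper also uses) is that once you have one point of $\RR_\pi$ in the open set $\WW$, a small product neighbourhood of it already meets $\QQ_{1,\tu{irr}}\times\QQ_{2,\tu{irr}}$ uncountably often.
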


\begin{proof}
First, we observe that $\RR_{\pi} = \QQ_{1,\tu{irr}} \times \QQ_{2,\tu{irr}}$, so the minimal set $\MM$ of $f$ is 
$\MM = \Cl(\QQ_{1,\tu{irr}} \times \QQ_{2,\tu{irr}}) = \QQ_1 \times \QQ_2$, the product of the Cantor sets of the factors
$\varphi$ and $\psi$. Let $B_{\delta_0} \subset \T^2$ be the closed embedded disk centered at $z_0$,
where $\delta_0$ is small enough so that it is contained in the rectangle of width $|J_1|$ and height $|J_2|$ centered at $z_0$.

Relative to the disk $B_{\delta_0}$, let $\WW(r, \theta_1, \theta_2)$ be any wedge, denoted $\WW$ for brevity from now on. 
Let $0 < \nu < \pi$, where $\nu = | \theta_1 - \theta_2 |$ is the angle between the two rays $\rho_{\theta_1}, \rho_{\theta_2}$ 
that bound the wedge, and define $\bar{\rho}:=\rho_{(\theta_1 + \theta_2)/2}$ the bisector of the two rays. Further, let $\nu' \in [0,2\pi)$ 
be the angle between $\bar{\rho}$ and the positive horizontal line through $z_0$. As the vertical and horizontal lines through $z_0$ contain points of $\RR_{\pi} \setminus \OO_f(z_0)$ arbitrarily close to $z_0$, by symmetry, we may assume without loss of generality that 
$0 < \nu' < \pi/2$. Given a point $w = (x,y) \in \bar{\rho} \cap \WW$, let $\ell_h(w), \ell_v(w)$ be the horizontal and vertical straight line through $w$ respectively and define the intercepts $\ell'_h(w) = \ell_h(w) \cap \WW$ and $\ell'_v(w) = \ell_v(w) \cap \WW$, which for $w$ sufficiently close to $z_0$ only pass through $\rho_{\theta_1}$ and $\rho_{\theta_2}$ (and not through the circular arc that cuts off the wedge).

There exist constants $K_h, K_v > 0$, depending only on $\theta_1$ and $\theta_2$, such that $\ell'_h(w) = K_h d(y,y_0)$ and 
$\ell'_v(w) = K_v d(x, x_0)$. Given any $0 < r \leq 1$, choose $w \in \bar{\rho}$ such that $d(w, z_0) < \delta_0 r$.
As the lengths $\ell'_h(w)$, $\ell'_v(w)$ behave linearly, and the lengths of the intervals $|I_{1,k}| \leq C_1 (d(x_0, x_k))^2$ and $|I_{2,t}| \leq C_2 (d(y_0, y_t))^2$ behave quadratically, with $C_1, C_2 >0$ uniform constants, if we choose $w$ sufficiently close to $z_0$, then $w \in R := I_{1,k} \times I_{2,t}$, for some $k ,t \in \Z$, with $\Cl(R)$ properly contained in $\WW$. The cornerpoints of the rectangle $R$ are limit points of $\RR_{\pi}$, and thus also of $\RR_{\pi} \setminus \OO_f(z_0)$, as a neighbourhood of any point $z \in \RR_{\pi}$ contains uncountably many points and only a countable orbit is deleted. Therefore, as $\Cl(R) \subset \WW$, we have that $\WW \cap (\RR_{\pi} \setminus \OO_f(z_0)) \neq \emptyset$, as required.
\end{proof}

\begin{proof}[Proof of Example~\ref{ex_type_III}]
We start with the homeomorphism $f \in \homeo_{\#}(\T^2)$, where $f$ is the product of two Denjoy-counterexamples $\varphi, \psi \in \homeo(\T^1)$, with a Cantor minimal set $\MM_1 = \QQ_1$ and $\MM_2 = \QQ_2$ respectively, semi-conjugate to an irrational translation $\tau$ through $\pi$. As every Cantor set in $\T^1$ can be realized as a minimal set of a Denjoy counterexample, we can choose $\varphi$ and $\psi$ such that the Cantor sets 
$\QQ_i$, with $i=1,2$, with $x_0 \in \QQ_{1, \tu{irr}}$ and $y_0 \in \QQ_{2, \tu{irr}}$ points and corresponding intervals $J_1$ and $J_2$ satisfy the conditions of Lemma~\ref{lem_cantor_set_perturb}. Let $z_0 = (x_0,y_0) \in \RR_{\pi}$ and let $B_{\delta_0} \subset \T^2$ be the closed embedded Euclidean disk with radius $\delta_0>0$ small enough so that $B_{\delta_0}$ is contained in the rectangle of width $|J_1|$ 
and height $|J_2|$ centered at $z_0$. Through the procedure in section~\ref{subsec_constr}, we blow up the orbit $\OO_{f}(z_0)$ to a collection of disks to obtain a homeomorphism $f' \in \homeo_*(\T^2)$ and a continuous $\phi \colon \T^2 \ra \T^2$, such that $\phi \circ f' = f \circ \phi$ and $\phi^{-1}(f^k(z_0)) = \Cl(\Sigma_k)$ is a closed topological disk, where $\gamma_k = \partial \Sigma_k$ is a simple closed curve, for every $k \in \Z$. 

\begin{figure}[h]
\begin{center}
\psfrag{sigma_k}{$\Sigma_k$}
\includegraphics[scale=0.45]{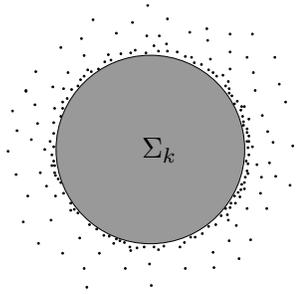}
\caption[A non-trivial extension of a Cantor set]{A non-trivial extension of a Cantor set $\MM'$; Cantor dust accumulating on the boundary $\gamma_k \subset \MM'$ of each disk $\Sigma_k$ and, conversely, every point of the Cantor dust is a limit point of increasingly small disks $\Sigma_k$.}
\label{fig_example_III_1}
\end{center}
\end{figure}

If we denote $\Sigma$ the doubly essential component of the complement of the minimal set $\MM$ of $f$, then $\Sigma' := \phi^{-1}(\Sigma) \subset \T^2$ is open as $\phi$ is continuous. As $\phi : \Sigma' \ra \Sigma$ is one-to-one and continuous, it is a homeomorphism.
Therefore, as $\Sigma$ is doubly essential, take for example the vertical and horizontal essential simple closed curves in $\Sigma$, then these are taken by $\phi^{-1}|_{\Sigma}$ to essential simple closed curves in $\Sigma'$ with the same homotopy type as $\phi$ is homotopic to the identity. Therefore, $\Sigma'$ is doubly essential indeed and thus $\MM'$ is of type III. The other connected components of the complement of $\MM'$ are, by construction, the disks $\Sigma_k$ for $k \in \Z$. 

Now, $\phi^{-1}( \RR_{\pi} \setminus \OO_f(z_0) ) \subset \MM'$ are all singletons, which by Lemma~\ref{lem_struct_min_sets} (2) combined with 
Lemma~\ref{lem_ex_III_cantor_blowup}, accumulate on the boundaries of the disks $\Sigma_k$ to form the non-trivial 
connected components $\gamma_k$, see Figure~\ref{fig_example_III_1}. Thus, as $\MM' = \Cl(\phi^{-1}( \RR_{\pi} \setminus \OO_f(z_0)))$ 
by Lemma~\ref{lem_struct_min_sets} (1), $\gamma_k \subset \MM'$ for every $k \in \Z$, which are the desired non-degenerate components of $\MM'$. 
\end{proof}

\begin{rem}
In the proof of example~\ref{ex_type_III}, we explicitly constructed a semi-conjugacy between the extension of the Cantor set $\MM'$ and the 
original Cantor set $\MM$. Theorem A in essence says that all extensions of Cantor sets are of this form. 
\end{rem}

\section{Concluding remarks}

Let us conclude this paper with the following remarks. Corollary~\ref{cor_limit_sets} states that homeomorphisms that admit a type I or II minimal set have in fact a unique minimal set. 
This poses the natural 

\begin{quest}[Uniqueness of type III minimal sets]\label{open_1}
Let $f \in \homeo_*(\T^2)$ with a minimal set $\MM$ of type III. Is the minimal set $\MM$ unique?
\end{quest}

Note that, by Lemma~\ref{lem_semi_conjugation}, a (possible) counterexample to the above question could not be an element of  $\homeo_{\#}(\T^2)$. Further, it would be interesting to get a completer description 
of the possible topology of extensions of Cantor sets.

\begin{quest}[Topology of type III minimal sets]\label{open_2}
Let $f \in \homeo_*(\T^2)$ with a minimal set $\MM$ of type III. 
\begin{enumerate}
\item[\tu{(i)}] Exactly what continua can arise as a connected component of $\MM$?
\item[\tu{(ii)}] Can $\MM$ have uncountably many non-degenerate connected components? 
Can all components be non-degenerate?
\end{enumerate}
\end{quest}

For example, a classical result by R. Moore~\cite{moore} implies that not all components of $\MM$ can be triodic continua, as the number of connected components of $\MM$ is uncountable.\footnote{A planar continuum $\CC$ is said to be {\em triodic} if there exists a connected 
closed set $\CC_0 \subset \CC$ such that $\CC \setminus \CC_0$ has at least three connected components.} However, it is not entirely clear whether uncountably many components could be for example an arc.

\subsection*{Erratum}

This is a corrected version of the published version~\cite{kwa} of this paper, namely~\cite[Lemma 12]{kwa} claimed that the 
complementary domains of a type II minimal set can not have simultaneously unbounded disks and essential annuli. The claim is certainly true if the unbounded disks are periodic, but it is not known whether it also holds for wandering unbounded disks.

\subsection*{Acknowledgements}
The author thanks Vladimir Markovic and Sebastian van Strien for useful discussions and suggestions that improved the manuscript. Further, the author thanks the referees for their comments and suggestions on the manuscript. The author was supported by 
Marie Curie grant MRTN-CT-2006-035651 (CODY).


\begin{thebibliography}{HD}

\normalsize

\baselineskip=17pt

\bibitem[A]{aarts} J. Aarts and L. Oversteegen, \emph{The dynamics of the {Sierpi\'nski} curve}, Proceedings of the American Mathematical Society 120-3 (1994), 965--968.

\bibitem[Ab]{abikoff} W. Abikoff, {\em The real-analytic theory of Teichm\"uller space}, Springer-Verlag, Berlin, 1980.

\bibitem[Be]{beguin} F. {{B{\'e}guin}, S. Crovisier, T. J{\"a}ger and F. le Roux}, \emph{Denjoy constructions for fibered homeomorphisms of the torus}, Transactions of the American Mathematical Society 361-11 (2009), 5851--5883.

\bibitem[B1]{bis_1} A. {Bi\'s} and H. Nakayama and P. Walczak, \emph{Locally connected exceptional minimal sets of surface homeomorphisms}, Annales de l'Institut Fourier 54-3 (2004), 711--731.

\bibitem[B2]{bis_2} A. {Bi\'s} and H. Nakayama and P. Walczak, \emph{Modelling minimal foliated spaces with positive entropy}, Hokkaido Mathematical Journal 36-8 (2007), 283--310.

\bibitem[Bo]{bohl} P. Bohl, \emph{\"Uber die hinsichtlich der unabh\"angigen Variabeln periodische Differentialgleichung erster Ordnung}, Acta Mathematica 40 (1916), 321--336.

\bibitem[Br]{bruin} H. Bruin, S. Kolyada and L. Snoha, \emph{Minimal nonhomogeneous continua}, Colloquium Mathematicum 95 (2003), 123--132.

\bibitem[Da]{daver} R. Daverman, \emph{Decomposition of manifolds}, Orlando Academic Press, 1986.

\bibitem[D]{denjoy} A. Denjoy, \emph{Sur les courbes d\'efinies par les \'equations diff\'erentielles \`a la surface du tore}, Journal Math\'ematiques Pures et Appliqu\'ees 9-11 (1932), 333--375.

\bibitem[H]{herman79} M. Herman, \emph{Sur la conjugaison diff\'erentiable des diff\'eomorphismes du cercle \`a des rotations}, Publications Math\'ematiques de l'IH\'ES  49 (1979), 5--233.

\bibitem[J]{jager} T. {J\"ager}, \emph{Linearization of conservative toral homeomorphisms}, Inventiones Mathematicae 176-3 (2009), 601--616.

\bibitem[JS]{jager_1} T. {J\"ager} and J. Stark, \emph{Towards a classification for quasiperiodically forced circle homeomorphisms}, Journal of the London Mathematical Society (2) 73-3 (2006), 727--744.

\bibitem[JK]{jager_2} T. {J\"ager} and G. Keller, \emph{The Denjoy type of argument for quasiperiodically forced circle diffeomorphisms}, Ergodic Theory and Dynamical Systems, 26-2 (2006), 447--465.

\bibitem[Ku]{kura} A. Kuratowski, \emph{Topology II}, Academic Press, New York, 1966.

\bibitem[Kn]{kneser} H. Kneser, \emph{Regul\"are Kurvenscharen auf den Ringfl\"achen}, Mathematische Annalen 91 (1923), 135--154.

\bibitem[KK]{kocsard} A. Kocsard and A. Koropecki, \emph{Free curves and periodic points for torus homeomorphisms}, Ergodic Theory and Dynamical Systems 28-6 (2008), 1895--1915.

\bibitem[Kw]{kwa} F. Kwakkel, {\em Minimal sets of non-resonant torus homeomorphisms}, Fund. Math. {\bf 211} (2011), 41-76.


\bibitem[McS]{mcswiggen} P. McSwiggen, \emph{Diffeomorphisms of the torus with wandering domains}, Proceedings of the American Mathematical Society 117-4 (1993), 1175--1186.

\bibitem[McS2]{mcswiggen2} P. McSwiggen, \emph{Diffeomorphisms of the $k$-torus with wandering domains}, Ergodic Theory and Dynamical Systems 15 (1995), 1189--1205.

\bibitem[MZ]{misi} M. Misiurewicz and K. Ziemian, \emph{Rotation sets for maps of tori}, Journal of the London Mathematical Society 40 (1980), 490--506.

\bibitem[M]{moore} R. Moore, \emph{Concerning triodic continua in the plane}, Fundamenta Mathematicae 13 (1929), 261--263.

\bibitem[No]{no} A. Norton, \emph{Minimal sets, wandering domains, and rigidity in the 2-torus}, Contemporary Mathematics 117 (1991), 129--138. 

\bibitem[NS]{ns} A. Norton and D. Sullivan, \emph{Wandering domains and invariant conformal structures for mappings of the 2-torus}, Annales Academiae Scientiarum Fennicae 21 (1996), 51--68. 

\bibitem[NV]{nv} A. Norton and J. Velling, \emph{Conformal irregularity for Denjoy diffeomorphisms of the 2-torus}, Rocky Mountain Journal of Mathematics 24 (1994), 655--671. 

\bibitem[PdM]{pm} W. de Melo and J. Palis, {\em Geometric theory of dynamical systems}, Springer Verlag, 1982.

\bibitem[P1]{poincare_1} H. {Poincar\'e}, \emph{Sur les courbes d\'efinies par une \'equation diff\'erentielle}, Journal Math\'ematiques Pures et Appliqu\'ees 3-VII (1881), 375--425.

\bibitem[P2]{poincare_2} H. {Poincar\'e}, \emph{Sur les courbes d\'efinies par une \'equation diff\'erentielle}, Journal Math\'ematiques Pures et Appliqu\'ees 3-VIII (1882), 251--296.

\bibitem[P3]{poincare_3} H. {Poincar\'e}, \emph{Sur les courbes d\'efinies par une \'equation diff\'erentielle}, Journal Math\'ematiques Pures et Appliqu\'ees 4-I (1885), 167--244. 

\bibitem[R1]{rees_1} M. Rees, \emph{A point distal transformation of the torus}, Israel Journal of Mathematics 32 (1979), 201--208. 

\bibitem[R2]{rees_2} M. Rees, \emph{A minimal positive entropy homeomorphism of the 2-torus}, Journal of the London Mathematical Society (2) 23 (1981), 537--550. 

\bibitem[S]{sierpinski} W. Sierpi\' nski, \emph{Un th\'eor\`eme sur les continus}, Tohoku Mathematical Journal 13 (1918), 300--303. 

\bibitem[W]{whyburn} G. Whyburn, \emph{Analytic topology}, AMS Colloquium Publications 28, 1942.

\end{thebibliography}
\end{document}